\documentclass[11pt,reqno]{amsproc}

\title{Transverse Instability of Stokes Waves at Finite Depth}

\author{Ryan P. Creedon}
\address{Division of Applied Mathematics, Brown University, Providence, RI 02912}
\email{ryan$\_$creedon@brown.edu}

\author{Huy Q. Nguyen}
\address{Department of Mathematics, University of Maryland, College Park, MD 20742}
\email{hnguye90@umd.edu}

\author{Walter A. Strauss}
\address{Department of Mathematics, Brown University, Providence, RI 02912}
\email{walter$\_$strauss@brown.edu}
\usepackage[margin=1in]{geometry}
\usepackage{ulem}
\usepackage{amsmath, amsthm, amssymb, mathrsfs}
\usepackage{times}
\usepackage{color}
\usepackage{hyperref}
\usepackage{multirow}
\usepackage{graphicx}
\usepackage{tikz}
\usetikzlibrary{decorations.pathmorphing}
\usepackage{stix}

\parskip=15pt
\parindent=15pt
\newcommand{\bq}{\begin{equation}}
\newcommand{\eq}{\end{equation}}
\newcommand{\bqa}{\begin{eqnarray*}}
\newcommand{\eqa}{\end{eqnarray*}}


\theoremstyle{plain}
\newtheorem{theo}{Theorem}[section]
\newtheorem{prop}[theo]{Proposition}
\newtheorem{lemm}[theo]{Lemma}
\newtheorem{coro}[theo]{Corollary}

\newtheorem{defi}[theo]{Definition}
\theoremstyle{definition}
\newtheorem{rema}[theo]{Remark} 

\DeclareMathOperator{\cnx}{div}

\DeclareMathOperator{\I}{I}

\DeclareSymbolFont{pletters}{OT1}{cmr}{m}{sl}
\DeclareMathSymbol{s}{\mathalpha}{pletters}{`s}

\def\sign{\mathrm{sign}}
\def\tt{\theta}
\def\eps{\varepsilon}
\def\na{\nabla}

\def\wh{\widehat}
\def\g{\gamma}
\def\lb{\lambda}
\def\mez{\frac{1}{2}}
\def\tdm{\frac{3}{2}}

\def\sF{\mathscr F} 
\def\sG{\mathscr G}
\def\Rr{\mathbb{R}}

\def\Zz{\mathbb{Z}}

\def\cS{\mathcal{S}}

\def\cG{\mathcal{G}}
\def\cJ{\mathcal{J}}
\def\cK{\mathcal{K}}

\def\cT{\mathcal{T}}
\def\cL{\mathcal{L}}
\def\L1{\mathcal{L}^{(1)}}
\def\L2{\mathcal{L}^{(2)}}
\def\L3{\mathcal{L}^{(3)}}

\def\cO{O}
\def\cH{\mathcal{H}}
\def\cV{\mathcal{V}}
\def\ld{\lambda}

\def\p{\partial}

\def\na{\nabla}

\def\ka{\kappa}

\def\ol{\overline}
\def\T{\mathbb{T}}
\def\Tt{\Theta}
\def\wt{\widetilde}

\def\ka{\kappa}

\def\ld{\lambda}

\numberwithin{equation}{section}

\pagestyle{plain}

\begin{document}
\begin{abstract}
A Stokes wave is a traveling free-surface periodic water wave  that is constant in the direction transverse to the direction of propagation. 
In 1981 McLean discovered via numerical methods that Stokes waves are unstable with respect to transverse perturbations. 
In \cite{CreNguStr} for the case of infinite depth we proved rigorously that the spectrum of the water wave system linearized at small Stokes waves, with respect to transverse perturbations, contains unstable eigenvalues lying approximately on an ellipse.  In this paper we consider the case of finite depth and prove that the same  spectral instability result holds for all but finitely many values of the depth. The computations  are considerably more complicated in the finite depth case.  

\end{abstract}

\keywords{Stokes waves, gravity  waves, transverse instabilities, unstable eigenvalues, finite depth}

\noindent\thanks{\it{ MSC Classification: 76B07, 35Q35,  35R35, 35C07, 35B35.}}

\maketitle

\section{Introduction} 
We consider classical  water waves that are irrotational, incompressible,  and inviscid.  
The water lies below an unknown free surface $S$. 
 Such waves have  been studied for over two centuries, notably by Stokes \cite{Stokes}.  
A {\it Stokes wave} is a two-dimensional steady wave traveling in a fixed horizontal direction at a fixed speed $c$. It has been known for a century that 
a curve of small-amplitude Stokes waves exists \cite{Nekrasov, Civita, Struik}.  
Several decades ago it was proven that the curve extends to large amplitudes as well \cite{Krasovskii, Keady, Toland, AFT, Plotnikov}. 

A brief history of longitudinal (non-transverse) instabilities was given in \cite{CreNguStr}, to which the reader is referred.  That literature left open the question of 
whether a small Stokes wave could be unstable when perturbed in both horizontal directions but keeping the longitudinal period unperturbed. 
 This  {\it transverse instability} problem was studied numerically first by Bryant \cite{Bryant} and then  
by much more detailed work of McLean {\it et al} \cite {MMMSY, McLeanDeep, McLeanFinite}. 
While these remarkable papers did detect transverse instabilities, a mathematical proof of these {\it three-dimensional} instabilities has been missing ever since.  In \cite{CreNguStr} we considered the case of infinite depth and proved  rigorously that the spectrum of the water wave system linearized at small Stokes waves contains unstable eigenvalues lying approximately on an ellipse.  The purpose of the present paper is to prove such a spectral instability result for the case of finite depth.

For background and references on longitudinal (modulational and high frequency) instabilities, 
we refer to \cite{CreNguStr}.  To bring the references up to date,  we mention some very recent work on 
longitudinal instability by Berti et al \cite{Berti4} and on near-extreme Stokes waves by Deconinck et al \cite{DDS24}.  
It is important to note that there are several other models of water waves for which the transverse instability has been studied rigorously, for instance for gravity-capillary waves \cite{RouTzv, HTW}.  

As in \cite{CreNguStr} we now specify the parameters of our problem. Let $x$ and $y$ denote the horizontal variables 
and $z$ the vertical one.  
The problem depends in a significant way on the {\it depth} $h$.  
Consider the curve of Stokes waves traveling in the $x$-direction and with a given period, say $2\pi$ without loss of generality.  This curve is parametrized by a small parameter $\eps$ which represents the wave  amplitude of the Stokes waves.  
Such a steady wave can be described in the moving $(x,z)$ plane 
(where $x-ct$ is replaced by $x$) 
by its free surface 
$S=\{(x,y,z)\ |\ z=\eta^*(x;\eps)\}$ and by its velocity potential $\psi^*(x;\eps)$ restricted to $S$.

We take the perturbation of $\eta^*$ to have the form $\overline{\eta}(x)e^{\lambda t + i\alpha y}$, where $\overline{\eta}$ has the {\it same period} $2\pi$ as the Stokes wave, $\lambda \in \mathbb{C}$ is the growth rate of the perturbation, and $\alpha\in \Rr$ is the transverse  wave number  of the perturbation. In other words, we consider  purely transverse perturbations,  leaving out any effect of modulational instabilities. The problem is to find at least one value of $\alpha$  that leads to instability, that is, $\text{Re}\lambda > 0$.   After linearizing the nonlinear water wave system about a Stokes wave, introducing a ``good-unknown,'' and performing a conformal mapping  change of variables, we find that the exponents $\lambda$ are eigenvalues of a linear Hamiltonian operator $\mathcal{L}_{h,\varepsilon,\beta}$, where $\beta = \alpha^2$. 
Of course, the operator depends in a significant way on the  depth $h$.  

For $\varepsilon=0$ and depth $h$, the eigenvalues of the linearized operator are
\bq 
\ld^0_\pm(k, \beta)=i\left[\tanh^{1/2}(h)k\pm (k^2+\beta)^{1/4}\tanh^{1/2}\left(h\left(k^2+\beta \right)^{1/2} \right)\right],\quad k\in \Zz.
\eq
It is clear that the limit as $h\to\infty$ is simpler.  
Motivated by \cite{MMMSY}, we determine a {\it resonant transverse wave number} $\alpha_*$ so that the unperturbed operator $\cL_{h, 0, \beta_*}$ with $\beta_* = \alpha_*^2$ has an imaginary double eigenvalue $ i\sigma$. 
 This eigenvalue corresponds to the lowest possible resonance ($m=1$ in \eqref{resonancecond}) that generates a Type II transverse instability according to McLean \cite{MMMSY}, of which there are infinitely many higher-order resonances that potentially generate higher-order transverse instabilities. We expect however that higher-order transverse instabilities have smaller growth rates for small Stokes waves. More precisely, we conjecture that the $m^{th}$ resonance in \eqref{resonancecond} would lead to instabilities  at order $\eps^{2m+1}$.
 
 In order to capture the transverse instabilities we introduce a new small parameter $\delta$ for the perturbation of $\beta$ about $\beta_*$. Our main result is that {\it the perturbed operator $\cL_{h, \eps, \beta_*+\delta}$ has eigenvalues $\ld_\pm$ 
with non-zero real parts that bifurcate from $i\sigma$}, stated more precisely in the following theorem.  

\begin{theo} \label{theo:main}
 Let the depth $h\in (0,\infty)$ be given.  
Except for a finite number $N$ of values of $h$, the following instability statement is true. 
There exist $\varepsilon_{\textrm{max}} > 0$ and $\delta_{\text{max}}>0$  such that  
for all $ \varepsilon \in (- \varepsilon_{\textrm{max}},  \varepsilon_{\textrm{max}})$ and $\delta\in(-\delta_{\text{max}},\delta_{\text{max}})$, 
the operator $\mathcal{L}_{h, \varepsilon,\beta_*+\delta}$ has a pair of eigenvalues
\begin{align}
\lambda_{\pm} &=i\Big(\sigma + \frac{1}{2}T(\varepsilon,\delta) \Big) \pm \frac12 \sqrt{\Delta(\varepsilon,\delta)}, \label{lambda_exact1}
\end{align}
where $T$ and $\Delta$ are real-valued, real-analytic functions such that $T(\varepsilon,\delta) = O\left(\delta \right)$ and $\Delta(\varepsilon,\delta) = O\left(\delta^2\right)$ as $(\varepsilon,\delta) \rightarrow (0,0)$. Furthermore, there exist $\ka_0 \in \mathbb{R}$ and $\ka_1>0$ such that for 
\begin{align}
\delta = \delta(\varepsilon,\theta) = \kappa_0\varepsilon^2 + \theta\varepsilon^3 \quad \textrm{with} \quad |\theta|<\kappa_1,
\end{align}
we have $\Delta(\varepsilon,\delta(\varepsilon,\theta))>0$ for sufficiently small $\varepsilon$.
Thus the eigenvalue $\lambda_+$  has positive real part provided $\delta = \delta(\varepsilon,\theta)$ with $|\theta|<\kappa_1$ and $\varepsilon$ is sufficiently small. Moreover, $\text{Re}\lambda_+ = O\left(\varepsilon^3\right)$ as $\varepsilon \rightarrow 0$ for each $\theta$.
This means that there exist transverse perturbations of the given Stokes wave whose amplitudes 
grow temporally like $e^{t\,\text{Re}\ld_+ }$.
\end{theo}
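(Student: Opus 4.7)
The plan is a Lyapunov--Schmidt reduction to a $2\times 2$ matrix eigenvalue problem, followed by exploitation of the Hamiltonian structure and an order-by-order asymptotic expansion in $(\varepsilon,\delta)$. By the choice of $\beta_*$, the point $i\sigma$ is a semisimple double eigenvalue of $\mathcal{L}_{h,0,\beta_*}$ with two linearly independent eigenvectors arising from two distinct Fourier wave numbers $k_1,k_2$ satisfying $\ld^0_+(k_1,\beta_*)=\ld^0_-(k_2,\beta_*)=i\sigma$. For all but finitely many depths $h$, this eigenvalue is isolated from the rest of the spectrum of $\mathcal{L}_{h,0,\beta_*}$; the finite exceptional set consists of those $h$ where an additional eigenvalue of $\mathcal{L}_{h,0,\beta_*}$ accidentally coincides with $i\sigma$ (a higher-order resonance) or where a leading coefficient in the expansion below vanishes. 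Under this separation, Kato's analytic perturbation theory furnishes a real-analytic family of Riesz projectors
\[
P(\varepsilon,\delta) \;=\; -\frac{1}{2\pi i}\oint_\gamma \bigl(\mathcal{L}_{h,\varepsilon,\beta_*+\delta}-z\bigr)^{-1}\, dz
\]
where $\gamma$ is a small circle enclosing $i\sigma$, and an analytic basis $\{\phi_1(\varepsilon,\delta),\phi_2(\varepsilon,\delta)\}$ of its range. The restriction of $\mathcal{L}_{h,\varepsilon,\beta_*+\delta}$ to $\Ran P(\varepsilon,\delta)$ is a $2\times 2$ matrix $M(\varepsilon,\delta)$ whose eigenvalues are the perturbed eigenvalues $\ld_\pm$, and by the quadratic formula these are $\tfrac12 \tr M \pm \tfrac12\sqrt{(\tr M)^2 - 4\det M}$.

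The Hamiltonian structure of the water wave problem, together with the reflection symmetry preserved by the good-unknown and conformal map transformations, forces the spectrum of $\mathcal{L}_{h,\varepsilon,\beta}$ to be invariant under $\ld\mapsto -\ld$ and $\ld\mapsto \overline{\ld}$ for each real $\beta$. I would exploit this to select a basis of $\Ran P(\varepsilon,\delta)$ in which $M(\varepsilon,\delta)$ inherits a symplectic/reality structure that makes $T(\varepsilon,\delta):=-i(\tr M - 2i\sigma)$ and $\Delta(\varepsilon,\delta):=(\tr M)^2 - 4\det M$ real-analytic and real-valued, yielding the precise form \eqref{lambda_exact1}. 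The same symmetries pin down the lowest-order behaviour: the $\delta$-independent contributions to $T$ and to $\Delta$ vanish at every order in $\varepsilon$, because at $\delta=0$ the symmetry about $i\sigma$ must persist through every order of the Stokes expansion, giving $T = O(\delta)$ and $\Delta = O(\delta^2)$.

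To locate the region of instability I would expand $\Delta$ order by order using the Stokes series $\eta^*=\sum_{n\ge 1}\varepsilon^n\eta_n$, $\psi^*=\sum_{n\ge 1}\varepsilon^n\psi_n$ and evaluating matrix entries by residue pairings against the dual basis, continuing to high enough order to capture all terms of total $(\varepsilon,\delta)$-weighted degree matching $\varepsilon^6$ when $\delta\sim\varepsilon^2$. Off the exceptional set of depths, I expect the leading expansion
\[
\Delta(\varepsilon,\delta) \;=\; A(h)\,\varepsilon^6 \;-\; B(h)\bigl(\delta-\kappa_0(h)\varepsilon^2\bigr)^2 \;+\; (\text{higher order}),
\]
with $A(h),B(h)>0$. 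Setting $\kappa_1 := \sqrt{A(h)/B(h)}$ and inserting $\delta=\kappa_0\varepsilon^2+\theta\varepsilon^3$ gives $\Delta = B(h)\bigl(\kappa_1^2-\theta^2\bigr)\varepsilon^6 + o(\varepsilon^6)$, which is positive for $|\theta|<\kappa_1$ and sufficiently small $\varepsilon$, whence $\RE \ld_+ = \tfrac12\sqrt{\Delta}=O(\varepsilon^3)$.

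The central obstacle is the algebra of this order-by-order computation at finite depth. Every coefficient entering $M(\varepsilon,\delta)$ is now a transcendental function of $h$ through $\tanh(h\,\cdot)$, and the coupling of Fourier modes through the Stokes wave, the conformal map, and the good-unknown produces sizable algebraic identities that must be verified to extract $\kappa_0$, $A(h)$, $B(h)$. The finite set of exceptional depths arises precisely as the zero set of the analytic functions $A(h)$ and $B(h)$ (together with any $h$ at which spectral isolation of $i\sigma$ fails); establishing that this zero set is finite reduces to showing these real-analytic functions of $h$ are not identically zero, which can be arranged by matching against the infinite-depth computation of \cite{CreNguStr} in the limit $h\to\infty$ or by direct numerical verification at a single depth.
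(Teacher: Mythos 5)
Your overall strategy---Kato similarity reduction to a $2\times 2$ matrix on the perturbed two-dimensional invariant subspace, exploitation of the Hamiltonian and reversibility symmetries to force a real structure on the discriminant, and an order-by-order expansion in $(\varepsilon,\delta)$ culminating in a quadratic-in-$\delta$ leading form for $\Delta$---is essentially the paper's route. Your proposed form $\Delta = A(h)\varepsilon^6 - B(h)(\delta-\kappa_0\varepsilon^2)^2 + \cdots$ matches the paper's $\Delta = 4B^2 - (A-C)^2$ once one identifies $A(h)=4b_{3,0}^2$ and $B(h)=(a_{0,1}-c_{0,1})^2$, and your $\kappa_1=\sqrt{A/B}$ agrees with the paper's $\kappa_1 = 2|b_{3,0}|/|a_{0,1}-c_{0,1}|$. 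You would need to establish separately that $B(h)>0$ for all $h>0$; the paper does this via an explicit sign computation $a_{0,1}<0<c_{0,1}$ (Lemma~\ref{lemm:b30}). Also note your concern about higher-order resonances at $i\sigma$ is moot: Lemma~\ref{prop:lambda0Monotone} shows $k\mapsto\mathrm{Im}\,\lambda^0_\pm(k,\beta)$ is strictly increasing, so $i\sigma$ is isolated for every $h>0$.

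There is, however, a genuine gap in your argument for the finiteness of the exceptional set. You assert that finiteness ``reduces to showing these real-analytic functions of $h$ are not identically zero,'' to be checked at one depth or in the limit $h\to\infty$. This is not sufficient: a real-analytic function on the \emph{open} interval $(0,\infty)$ that is not identically zero can still have infinitely many zeros accumulating at an endpoint (consider $\sin h$ or $\sin(1/h)$). Non-vanishing at one point rules out the identically-zero case and gives $N\ge 1$, but does not give $N<\infty$. The paper's proof hinges on the two-sided asymptotics in Lemma~\ref{lemm:b30v2}: it establishes that $b_{3,0}(h)\sim \tfrac{9}{1024\sqrt{2}}h^{-9/2}\to +\infty$ as $h\to 0^+$ and $b_{3,0}(h)\to b_{3,0,\infty}<0$ as $h\to\infty$, so the zero set of $b_{3,0}$ is confined to a compact subinterval of $(0,\infty)$, and then analyticity gives finiteness. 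This is not a cosmetic detail: the remark immediately following the theorem emphasizes that in the analogous longitudinal problem there may be infinitely many exceptional depths, so the boundedness of the zero set away from $0$ and $\infty$ is precisely the content that needs to be proved. Your plan as written would not deliver the theorem's ``finitely many exceptional values'' conclusion.
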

 \begin{rema} 
 In the corresponding longitudinal result in \cite{Berti4}, there could be infinitely many exceptional values of the depth, whereas  in the transverse case Theorem \ref{theo:main} asserts that only finitely many such values can exist.  In fact, numerical computation shows that $N=1$, that is, there is only one such exceptional value of the depth. See Figure 3 in Section \ref{Sec:proof}. It should also be noted that small Stokes waves at the  exceptional depth(s) could still be susceptible to higher-order transverse instabilities.  
\end{rema}  
Fixing $\varepsilon$, 
substituting $\delta = \delta(\varepsilon,\theta)$ into \eqref{lambda_exact1} and dropping terms of $O\left(\varepsilon^4\right)$ and smaller, we obtain an asymptotic expansion of the unstable eigenvalues. As $\theta$ varies, we find that the eigenvalues lie approximately on an ellipse (an `isola') that is centered on the imaginary axis.  Figure 1 illustrates this ellipse for three different depths.  
As $\varepsilon$ varies, the center of the ellipse drifts from a double eigenvalue on the imaginary axis like $O\left(\varepsilon^2\right)$, while its semi-major and semi-minor axes scale like $O\left(\varepsilon^3\right)$.  We refer to Corollary \ref{cor:ellipse} for the precise statement. 
\begin{rema}
Shortly after the appearance of the preprint version of this paper, the preprint  \cite{JRY} of Jiao-Rodrigues-Yang proved the oblique instability of  small Stokes waves when additional localized perturbations  in the  longitudinal direction are included.  They thereby obtained an $\eps^2$-order instability, which is expected from the dominant modulational instability \cite{NguyenStrauss, Berti1}. 
\end{rema}

\begin{rema} 
Figure 1 compares our transverse instability isolae with the numerically computed eigenvalues  for a Stokes wave with amplitude $\varepsilon = 0.01$ at the three depths $h = 1$, $3/2$, and $2$. The eigenvalues are computed using the Floquet-Fourier-Hill method applied to the Ablowitz-Fokas-Musslimani formulation of the finite-depth water wave problem. Full details of this method are presented in \cite{DecOli15}. The distance between our transverse instability isolae and the numerically computed eigenvalues is $O\left(\varepsilon^4\right)$, as expected, since we do not calculate beyond $O\left(\varepsilon^3\right)$ to construct our isolae. The strong agreement between our isolae and the numerically computed eigenvalues to $O\left(\varepsilon^3\right)$ provides confidence both in our analytical results and in earlier numerical investigations of the transverse instability spectrum.  
Even better agreement could be found by retaining higher-order corrections of the unstable eigenvalues in a manner similar to \cite{CreDecTri}.
\end{rema}

\begin{figure}[t]
\includegraphics[width=14cm]{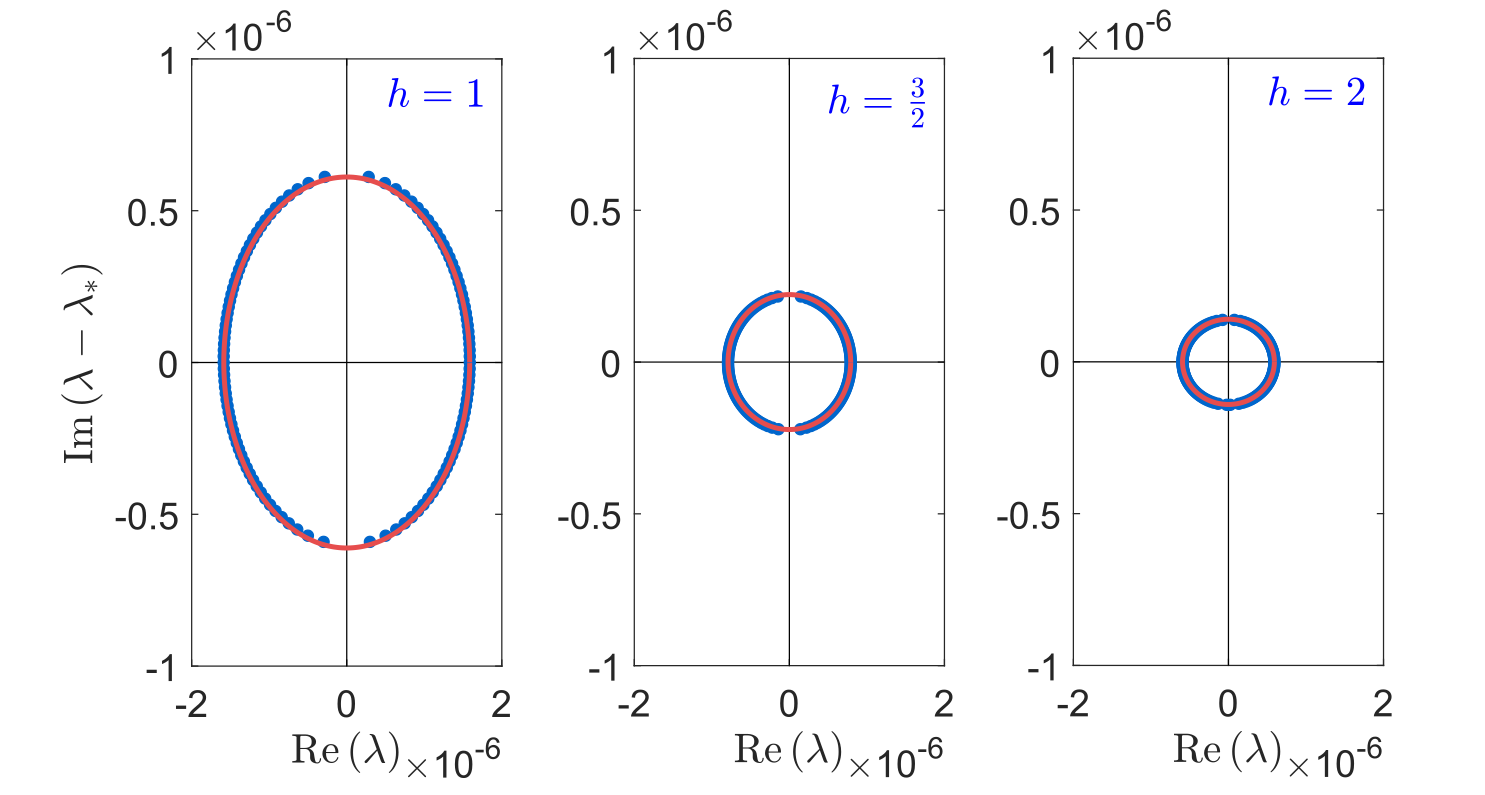}
\label{fig1}
\caption{A comparison of the transverse instability isola obtained in this work (orange curves) and numerical computations of the unstable eigenvalues of the transverse instability (blue dots) for a Stokes wave with amplitude $\varepsilon = 0.01$ in water of depth $h = 1$ (left), $h = 3/2$ (middle), and $h = 2$ (right). The center of the isola  is subtracted from its imaginary component to show a sense of scale. In each plot the distance between the orange curves and blue dots is $O\left(\varepsilon^4\right)$.  }
\end{figure}


We now turn to the  main ideas in the proof of Theorem \ref{theo:main}.  
It  begins by finding an  expression for 
$\cL_{h,\eps,\beta}$ by a method analogous to that in \cite{NguyenStrauss} and \cite{CreNguStr}. 
 It continues by following the method of \cite{Berti1} that uses a Kato similarity transformation 
to reduce the relevant spectral data of $\cL_{h,\eps,\beta}$ to a $2 \times 2$ matrix $\textrm{L}_{h,\eps, \delta}$ with the property that $i\textrm{L}_{h,\eps, \delta}$ is real and skew-adjoint.  
We prove that the entries of this matrix are real analytic functions of $\varepsilon$ 
and $\delta$ and we obtain convenient functional expressions for its eigenvalues, resulting in \eqref{lambda_exact1}. 
Compared to the infinite depth case  \cite{CreNguStr}, these expressions are much more complicated.  
In order to conclude that $\Delta(\varepsilon,\beta)>0$ for $\delta = \delta(\varepsilon,\theta)$ and sufficiently small $\varepsilon$, 
we must expand the entries of the matrix in a power series up to third order in the pair 
$(\varepsilon,\delta)$.  Second order is insufficient.  
The extremely arduous calculations require us to take advantage of Mathematica. They can be found in the companion Mathematica file {\it CompanionToTransverseInstabilities.nb}.

In Section 2 we introduce the Stokes waves and proceed with the  linearization and the flattening by means of a conformal mapping from a  strip   $\Omega_\eps=\Rr\times (-h_\eps, 0)$  to the two-dimensional fluid domain of the Stokes wave.  The main result that is required here is Theorem \ref{theo:flattenG}, which is devoted to the three-dimensional Dirichlet-Neumann operator $\cG_{\eps,\beta}$ that occurs in the linearized operator subject to the two-dimensional conformal mapping, and the proof of its analyticity in $\eps$ and $\delta$. The analyticity is proven  by an  approach inspired by Groves \cite{Groves} and is significantly shorter than our direct approach in \cite{CreNguStr}.

Theorem \ref{theo:main} is then reduced to studying the eigenvalues of the linearized 
operator $\cL_{h,\eps,\beta}$, which has a Hamiltonian form and is reversible.  
Section 3 is devoted to a discussion of the first resonance on the imaginary axis when $\eps=0$, the introduction of Kato's perturbed basis, 
and the reduction to the study of the eigenvalues of a reduced $2\times 2$ matrix $\textrm{L}_{h,\eps,\delta}$.  
In Section 4 we perform  expansions of $\cL_{h,\eps,\beta_*+\delta}$ out to third order in both $\eps$ and $\delta$, which are considerably more tedious than the corresponding expansions in \cite{CreNguStr,NguyenStrauss, Berti1}. 
We use the expansions of $\cL_{h, \eps,\beta_*+\delta}$ to compute the expansions of the Kato basis vectors and of the matrix $\textrm{L}_{h,\eps,\delta}$. 
Finally in Section 5 we analyze the leading terms in the characteristic discriminant of $\textrm{L}_{h,\eps,\delta}$.  An important step is to prove that a key coefficient, which we call $b_{3,0}$, does not vanish. 
 It is here that we must exclude the exceptional depths.  By determining the precise asymptotics of $b_{3, 0}$ as $h\to 0^+$ and $h\to \infty$, we are able to prove that there are at least one but at most only finitely many such exceptional depths. 
This concludes the proof of Theorem \ref{theo:main}.  The full equation that defines the isola 
of eigenvalues up to $O(\eps^4)$  is given in Corollary \ref{cor:ellipse}. 
\section{Transverse perturbations of Stokes waves}
\subsection{Stokes waves}
We consider the three-dimensional  fluid domain 
\bq
D(t)=\{(x, y,z)\in \Rr^3\ :\ -h<z<\eta(x,y, t)\}
\eq
with free surface $S(t)=\{(x ,y, \eta(x,y, t)): (x,y)\in \Rr^2 \}$ over a constant depth $h > 0$.  Assuming that the fluid is incompressible,  inviscid  and irrotational, the velocity field $\bf u$ admits a harmonic 
potential $\phi(x, y,  z, t): D(t)\to \Rr$. That is, $\bf u = \nabla\phi$.  Then $\phi$ and $\eta$ satisfy the water wave system
\bq\label{ww:0}
\begin{cases}
 \Delta_{x, y,z} \phi =0\quad \text{ in } D(t), \\
\p_t\phi + \tfrac12 |\na_{x, y,z}\phi|^2 =- g\eta +P\quad \text{ on } S(t), \\
 \p_t\eta+\p_x\phi\p_x\eta=\p_y\phi\quad  \text{ on } S(t), \\
\partial_z\phi(x,y,-h) = 0,
\end{cases}
\eq
where $P$ is the Bernoulli constant and $g>0$ is the constant acceleration due to gravity. The second equation is Bernoulli's,  which follows from the 
pressure being constant along the free surface; the third equation expresses the kinematic boundary condition that particles on the surface remain there; 
the last condition asserts that there is no flux of water out of the bottom boundary $\{z = -h\}$.  Without loss of generality we will take $P=0$. 

 In order to reduce the system to the free surface $S$, we introduce the Dirichlet-Neumann operator $G(\eta)$ 
 associated to $D$, namely, 
\bq\label{def:Gh}
(G(\eta)f)(x,y) := \p_z\vartheta(x,y, \eta(x,y))-\na_{x,y}\vartheta(x,y, \eta(x,y)) \cdot \na_{x,y}\eta(x,y),
\eq
where $\vartheta(x, y,z)$ solves the elliptic problem 
\bq\label{elliptic:G}
\begin{cases}
\Delta_{x, y,z}\vartheta=0\quad\text{in}~D,\\
\vartheta\vert_S=f(x,y), \\
\partial_z\vartheta(x,y,-h) = 0.
\end{cases}
\eq
 We define $\psi$, not as the stream function, but as the trace of the velocity potential on the free surface, $\psi(x,y, t)=\phi(x, y, \eta(x,y, t), t)$. Then, in the moving frame with speed $(c, 0)\in\Rr^2$, the gravity water wave system 
written in the Zakharov-Craig-Sulem formulation \cite{Zak, CraSul} is 
 \bq\label{ww}
 \begin{cases}
 \p_t \eta=c\p_x\eta+ G(\eta)\psi=:F_1(\eta, \psi,  c),\\
 \p_t\psi=c\p_x\psi-\mez|\na_{x,y}\psi|^2 
 + \mez\frac{\big( G(\eta)\psi+\na_{x,y}\psi\cdot\na_{x,y}\eta\big)^2}{1+|\na_{x,y}\eta|^2}-g\eta=:F_2(\eta, \psi,  c),
 \end{cases}
 \eq
For any $a>0$, if we make the change of variables 
\[
\eta(x, y, t)=\frac{1}{a} \tilde{\eta}(ax, ay, \sqrt{g}t),\quad \psi(x, y)=\frac{\sqrt{g}}{a\sqrt{a}} \tilde{\psi}(ax, ay, \sqrt{g}t),
\]
 then $(\tilde{\eta}, \tilde{\psi})$ is a solution of \eqref{ww}  with  $(g, c, h)$ replaced by $(1, \sqrt{\frac{a}{g}} c, a h)$. In particular, we can assume without loss of generality that $g=1$ in \eqref{ww}.
  
 By a {\it Stokes wave} we mean a periodic steady solution of \eqref{ww} that is independent of $y$.  
In view of the aforementioned scaling, without loss of generality, we henceforth consider $2\pi$-periodic Stokes waves.   As has been known for over a century,  
  there exists a curve of small Stokes waves parametrized analytically by the small amplitude $\eps$.  
  \begin{theo}     \label{theo:Stokes} 
 For any physical depth $h>0$ and any index $s>5/2$, there exist $\eps_{St}(s)>0$ depending on $h$ and a unique family of Stokes waves
 \[
 \big (\eta^*(x; \eps), \psi^*(x; \eps), c^*(\eps)\big)\in H^s(\T)\times H^s(\T)\times \Rr
 \]
 parametrized by $|\eps|<\eps_{St}(s)$, such that 
 \begin{itemize}
 \item(i) the mapping $(-\eps_{St}(s), \eps_{St}(s))\ni \eps\to  \big (\eta^*(\cdot; \eps), \psi^*(\cdot;\eps ), c^*(\eps)\big)\in H^s(\T)\times H^s(\T)\times \Rr$ is analytic;
 \item(ii) $\eta^*(\cdot;\eps )$ is even, and $\psi^*(\cdot;\eps  )$ is odd. 
 \end{itemize}
 \end{theo}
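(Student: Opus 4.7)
The plan is to realize small Stokes waves as the analytic bifurcation branch from the trivial flat state at the critical wave speed dictated by the linear dispersion relation. I would first reduce the steady-state version of the Zakharov–Craig–Sulem system \eqref{ww} to a functional equation on a subclass of $y$-independent profiles. Seeking solutions with $\partial_t = 0$ and independent of $y$, the problem becomes
\begin{equation*}
F(\eta,\psi,c) := \bigl(F_1(\eta,\psi,c),\, F_2(\eta,\psi,c)\bigr) = 0,
\end{equation*}
where the Dirichlet–Neumann operator $G(\eta)$ now acts on one-dimensional traces on $\mathbb{T}$ for a fluid of finite depth $h$. I would work in the Banach space $X^s \times Y^s$, where $X^s \subset H^s(\mathbb{T})$ is the closed subspace of even, zero-mean functions and $Y^s \subset H^s(\mathbb{T})$ is the closed subspace of odd functions, with $s>5/2$. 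This choice is preserved by $F$: the parity structure is consistent with the nonlinearity, and enforcing zero mean fixes the vertical translation ambiguity of the surface.

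Next I would linearize $F$ at the trivial solution $(\eta,\psi) = (0,0)$. Using $G(0) = |D|\tanh(h|D|)$ one obtains
\begin{equation*}
\mathrm{d}_{(\eta,\psi)} F(0,0,c) = \begin{pmatrix} c\,\partial_x & |D|\tanh(h|D|) \\[1pt] -1 & c\,\partial_x \end{pmatrix},
\end{equation*}
whose Fourier symbol on mode $k$ has determinant $-c^2 k^2 + |k|\tanh(h|k|)$. Setting $c = c_0 := \sqrt{\tanh(h)}$, the kernel consists of the modes with $|k| = 1$; after restricting to the symmetry class $X^s \times Y^s$, the kernel is one-dimensional, spanned by an explicit vector such as $(\cos x,\, c_0^{-1}\sin x)$. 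A short symbol computation shows that the co-kernel is also one-dimensional and that the classical Crandall–Rabinowitz transversality condition holds, with the dispersion relation depending nondegenerately on $c$ at $c_0$.

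With these pieces in place I would invoke the analytic version of the Crandall–Rabinowitz bifurcation theorem (equivalently, an analytic Lyapunov–Schmidt reduction followed by the analytic implicit function theorem on a one-dimensional reduced equation) to produce, for $|\varepsilon|<\varepsilon_{St}(s)$, a unique branch
\begin{equation*}
\varepsilon \mapsto \bigl(\eta^*(\cdot;\varepsilon),\, \psi^*(\cdot;\varepsilon),\, c^*(\varepsilon)\bigr) \in X^s \times Y^s \times \mathbb{R},
\end{equation*}
normalized by taking $\varepsilon$ to be the $\cos x$ Fourier coefficient of $\eta^*$. Parities (ii) are then built into the construction, and the zero-average condition is enforced by the choice of space.

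The decisive input, and the step I would regard as the main technical obstacle, is the analytic dependence of $G(\eta)$ on $\eta$ in a neighborhood of $0$ in $H^s(\mathbb{T})$ for fixed finite depth $h$, together with tame estimates ensuring that the nonlinear part of $F_2$ (the rational expression in $G(\eta)\psi$ and $\partial_x\eta,\partial_x\psi$) is analytic from $X^s \times Y^s \times \mathbb{R}$ into $H^{s-1}(\mathbb{T})$. For finite depth, this analyticity can be established by flattening the strip $\{-h<z<\eta(x)\}$ via a harmonic extension or a conformal map and expanding the resulting elliptic problem in powers of $\eta$; the argument parallels the one worked out for the three-dimensional operator $\mathcal{G}_{\varepsilon,\beta}$ in Theorem \ref{theo:flattenG} of this paper, but is simpler since the domain is two-dimensional and no transverse parameter $\beta$ is present. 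Once analyticity of $G(\eta)$ is in hand, the analyticity of the bifurcation branch claimed in (i) follows automatically from the analytic implicit function theorem.
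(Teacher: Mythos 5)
The paper does not prove this theorem; it cites Theorem~1.3 of \cite{Berti-DN}, which is established exactly along the lines you describe: prove analyticity of the finite-depth Dirichlet--Neumann map $\eta\mapsto G(\eta)$ near $\eta=0$, then run an analytic Crandall--Rabinowitz bifurcation at $c=c_0=\sqrt{\tanh h}$ in the even/odd parity class. Your outline captures that skeleton and correctly flags analyticity of $G(\eta)$ as the decisive technical ingredient.

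One step in your Fredholm bookkeeping would fail as written, and it fails specifically at finite depth. With the domain you chose (even, zero-mean $\eta$; odd $\psi$), the claim of a one-dimensional cokernel requires the codomain of the $F_2$-component to also exclude constants. But $F_2$ as written in \eqref{ww} does not land in zero-mean functions: evaluating the quadratic part of $F_2$ on the kernel direction $(\eps\cos x,\,\eps c_0^{-1}\sin x)$ gives a mean of $\tfrac14(c_0^2-c_0^{-2})\eps^2+O(\eps^3)$, which is nonzero precisely when $h<\infty$. With your declared spaces the linearization at $(0,0,c_0)$ therefore has Fredholm index $-1$, and the Crandall--Rabinowitz hypotheses are not met. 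The standard repair is to reinstate the Bernoulli constant $P$ (present in \eqref{ww:0} but set to zero in \eqref{ww}) as a scalar unknown carrying the zero mode of $F_2$, or equivalently to solve the projected equation $F_2-\langle F_2\rangle=0$ and read off the constant a posteriori. Once that one-dimensional correction is in place, the kernel/cokernel dimensions and the transversality condition are as you claim, and the argument closes as in the cited reference. Note that at infinite depth $c_0=1$ and this particular obstruction vanishes at leading order, which is why the same oversight could pass unnoticed in the deep-water setting but not here.
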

 \begin{proof} See Theorem 1.3 in \cite{Berti-DN} for instance.  \end{proof} 
We will need the following third-order expansions in $\eps$ of the above Stokes waves.   
 \begin{prop}    \label{eta expansion}
 The unique analytic family of Stokes waves given by Theorem \ref{theo:Stokes} has the expansions 
\begin{align} \label{expand:star}
c^*(\eps)&=c_0+ c_2 \eps^2+O(\eps^4), \\
\eta^*(x;\eps)&= \eps \cos x+ \eps^2\Big[\eta_{2,0} + \eta_{2,2}\cos(2x) \Big] + \eps^3\Big[\eta_{3,1}\cos x + \eta_{3,3}\cos(3x) \Big] + O(\eps^4),\\
\psi^*(x;\eps)&= \eps c_0^{-1}\sin x+\eps^2\psi_{2,2}\sin (2 x)+ \eps^3\Big[\psi_{3,1}\sin x+\psi_{3,3}\sin(3x)\Big] + O(\eps^4),
\end{align}
where
\allowdisplaybreaks
\begin{align}
c_0 = \sqrt{\tanh (h)}, \qquad &
c_2 =\frac{-12c_0^{12} + 13c_0^8 - 12c_0^4 + 9}{16c_0^7}, \\
\eta_{2,0} = \frac{c_0^4 - 1}{4c_0^2}, \quad
\eta_{2,2} = \frac{-c_0^4+3}{4c_0^6}, \quad &
\eta_{3,1} = \frac{-2c_0^{12}+3c_0^8+3}{16c_0^8(1+c_0^2)}, \quad
\eta_{3,3} = \frac{-3c_0^{12}+9c_0^8-9c_0^4+27}{64c_0^{12}}, \\
\psi_{2,2} = \frac{c_0^8+3}{8c_0^7}, \quad
\psi_{3,1} = &\frac{2c_0^{12}-8c_0^8-3}{16c_0^{7}(1+c_0^2)}, \quad
\psi_{3,3} = \frac{-9c_0^{12}+19c_0^8+5c_0^4+9}{64c_0^{13}}. 
\end{align}
\end{prop}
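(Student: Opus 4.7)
The plan is to exploit the analyticity in Theorem \ref{theo:Stokes} to write
\begin{align*}
\eta^*(x;\varepsilon) = \sum_{k\geq 1}\varepsilon^k\eta_k(x), \quad
\psi^*(x;\varepsilon) = \sum_{k\geq 1}\varepsilon^k\psi_k(x), \quad
c^*(\varepsilon) = c_0 + \sum_{k\geq 1}\varepsilon^k c_k,
\end{align*}
and to determine the unknown coefficients by substituting into the stationary water wave system (set $\partial_t \equiv 0$ and $\partial_y \equiv 0$ in \eqref{ww}) and matching order by order in $\varepsilon$. Property (ii) of Theorem \ref{theo:Stokes} restricts $\eta_k$ to cosine Fourier series with zero mean and $\psi_k$ to sine series. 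The additional reflection symmetry of the branch, $\eta^*(x+\pi;\varepsilon) = \eta^*(x;-\varepsilon)$ and similarly for $\psi^*$, forced by the uniqueness asserted in Theorem \ref{theo:Stokes} together with the leading behavior $\eta^* \sim \varepsilon\cos x$, implies that $c^*$ is even in $\varepsilon$ (so $c_k = 0$ for odd $k$) and that the $n$-th Fourier mode of $\eta_k$ vanishes unless $k\equiv n\pmod 2$. This leaves precisely the Fourier content displayed in \eqref{expand:star}, whose coefficients we must identify.

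The core tool is the Craig--Sulem expansion \cite{CraSul} of the Dirichlet--Neumann operator, $G(\eta) = G_0 + G_1(\eta) + G_2(\eta) + G_3(\eta) + O(\eta^4)$, with $G_0 = |D|\tanh(h|D|)$ and explicit $n$-homogeneous kernels $G_n$. Inserting the ansatz into
\begin{align*}
c\partial_x \eta + G(\eta)\psi = 0, \qquad c\partial_x\psi - \tfrac12(\partial_x\psi)^2 + \tfrac12\frac{(G(\eta)\psi + \partial_x\psi\,\partial_x\eta)^2}{1 + (\partial_x\eta)^2} - \eta = 0,
\end{align*}
and grouping terms produces, at each order $\varepsilon^k$, the linear system $c_0\partial_x\eta_k + G_0\psi_k = F_k^{(1)}$ and $c_0\partial_x\psi_k - \eta_k = F_k^{(2)}$, where $F_k^{(1)}, F_k^{(2)}$ depend only on lower-order data. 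At $O(\varepsilon)$ the system is homogeneous, and its nontrivial solvability on the resonant mode $k=\pm 1$ forces $c_0^2 = \tanh(h)$; the amplitude normalization $\eta_1 = \cos x$ then yields $\psi_1 = c_0^{-1}\sin x$. At $O(\varepsilon^2)$ the quadratic forcing populates the modes $k = 0$ and $k = \pm 2$; a scalar equation on the zero mode gives $\eta_{2,0}$, the $k=\pm 2$ block is a non-degenerate $2\times 2$ system yielding $\eta_{2,2}$ and $\psi_{2,2}$, and the absence of a $\cos x,\sin x$ component of the forcing is consistent with $c_1 = 0$. At $O(\varepsilon^3)$ the forcing is cubic in $(\eta_1,\psi_1)$ plus cross terms with $(\eta_2,\psi_2)$, splitting onto modes $k = \pm 1$ and $k = \pm 3$; the Fredholm compatibility condition on the resonant $k=\pm 1$ block fixes $c_2$, inversion on the complementary direction gives $\eta_{3,1}, \psi_{3,1}$, and the nonresonant $k=\pm 3$ block gives $\eta_{3,3}, \psi_{3,3}$.

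The main obstacle is purely computational. To reach third order one must carry $G_3(\eta)$ exactly, evaluate the dispersive symbol $|k|\tanh(hk)$ at $k\in\{1,2,3\}$, and reduce long products of trigonometric polynomials. The formulas collapse to the compact rational expressions in $c_0$ displayed in the proposition only after repeated use of $\tanh(h) = c_0^2$ together with the elementary identities expressing $\tanh(2h)$ and $\tanh(3h)$ as rational functions of $c_0^2$, thereby eliminating all hyperbolic functions in favor of $c_0$. This bookkeeping is best handled in Mathematica, as noted in the introduction; the structure of the argument is the classical Stokes expansion \cite{Stokes,Nekrasov,Civita,Struik} adapted to general finite depth.
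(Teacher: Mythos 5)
Your proposal outlines a first-principles derivation via the classical Stokes expansion, whereas the paper's proof is simply a citation: ``See (1.9) in \cite{Berti3}.'' So you are taking a genuinely different --- and more self-contained --- route.

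Your structure is correct: expand $(\eta^*,\psi^*,c^*)$ in powers of $\eps$, substitute into the steady system obtained from \eqref{ww} by setting $\partial_t\equiv 0$ and $\partial_y\equiv 0$, use the Craig--Sulem expansion of $G(\eta)$ about the flat state with $G_0=|D|\tanh(h|D|)$, exploit the even/odd parity from Theorem~\ref{theo:Stokes}(ii) together with the $(\eps,x)\mapsto(-\eps,x+\pi)$ symmetry to restrict the Fourier support at each order, and then solve order by order: nonresonant modes ($0,\pm2,\pm3$) by direct inversion and the resonant $\pm1$ block at odd orders by the Fredholm alternative, which produces the dispersion relation $c_0^2=\tanh(h)$ at $O(\eps)$ and fixes $c_2$ at $O(\eps^3)$. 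Reducing all $\tanh(jh)$ to rational functions of $c_0^2$ is indeed what collapses everything to the displayed rational expressions, and the bookkeeping through $G_3$ is exactly the kind of thing Mathematica is for. What the paper's citation buys is brevity and offloading the verification; what your route buys is transparency and independence from an external source.

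One point you should sharpen before claiming the specific constants $\eta_{3,1}$ and $\psi_{3,1}$: at $O(\eps^3)$ the resonant block $\{\pm1\}$ is singular. The Fredholm solvability condition fixes $c_2$, but the particular solution is determined only modulo the kernel $\mathrm{span}\{(\cos x,\, c_0^{-1}\sin x)\}$, and the component along that kernel direction is pinned down by the \emph{amplitude normalization} of the branch --- i.e., the precise meaning of the parameter $\eps$ in Theorem~\ref{theo:Stokes}. You invoke ``the amplitude normalization $\eta_1=\cos x$'' and then ``inversion on the complementary direction,'' but you do not say which complement is chosen. Different normalizations (e.g., $\widehat{\eta^*}(1;\eps)\equiv\tfrac12\eps$, versus normalizations involving $\psi^*$ or the mean of $\eta^*$) shift the kernel component and give different $\eta_{3,1},\psi_{3,1}$; the fact that the stated $\eta_{3,1}$ is nonzero already shows the normalization is not ``kill the $\cos x$ mode of $\eta_3$.'' To get exactly the constants in the proposition you must adopt the same normalization used in \cite{Berti3} (and implicit in Theorem~\ref{theo:Stokes}), and make that choice explicit when you do the $O(\eps^3)$ projection. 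This is a detail rather than a structural error, but it is the one place where your outline, as written, leaves an honest degree of freedom that the stated formulas do not.
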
 \begin{proof}  See (1.9) in   \cite{Berti3}.   \end{proof} 
\subsection{Transverse perturbations, linearization, and transformations}
Let us fix a Stokes wave $(\eta^*, \psi^*, c^*)$ with amplitude $\eps$ and  perturb it by a two-dimensional perturbation:
\bq\label{perturbation:etapsi}
\eta(x, y)=\eta^*(x)+\nu \ol{\eta}(x, y),\quad \psi(x, y)=\psi^*(x)+\nu \ol{\psi}(x, y),\quad |\nu|\ll 1.
\eq
To prepare for the linearization of the dynamics of the full water wave equations,  
we  recall the  shape-derivative formula for the derivative of $G(\eta)\psi$ with respect to $\eta$.

\begin{theo}    \label{theo:shapederi}
Let $\eta: \T^2\to \Rr$ and $\psi: \T^2\to \Rr$. The shape-derivative of $G(\eta)\psi$ with respect to $\eta$ is denoted by 
\bq
G'(\eta)\overline{\eta}\psi=\lim_{h\to 0}\frac{1}{h}\left(G(\eta+h\overline{\eta})\psi-G(\eta)\psi\right).
\eq
We have
\bq\label{shapederi:form}
 G'(\eta)\ol{\eta}\psi=-G(\eta)(\ol\eta B(\eta)\psi)-\cnx(\ol\eta V(\eta)\psi),
\eq
where
\bq\label{def:BV:0}
B(\eta)\psi:=\frac{G(\eta)\psi+\na \eta\cdot \na \psi }{1+|\na \eta|^2},\quad V(\eta)\psi:=\na \psi-\na \eta B(\eta)\psi.
\eq
\end{theo}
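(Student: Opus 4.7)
The plan is to prove the formula by differentiating the defining relation for $G(\eta)\psi$ with respect to $\eta$ directly, exploiting the fact that $B(\eta)\psi$ and $V(\eta)\psi$ are precisely the vertical and horizontal components of $\nabla_{x,y,z}\vartheta$ evaluated at the surface $\{z=\eta\}$. Indeed, since $\psi(x,y)=\vartheta(x,y,\eta(x,y))$, the chain rule gives $\nabla\psi = (\nabla_{x,y}\vartheta)|_{z=\eta} + (\p_z\vartheta)|_{z=\eta}\,\nabla\eta$, and combined with \eqref{def:Gh} this identifies $B(\eta)\psi = (\p_z\vartheta)|_{z=\eta}$ and $V(\eta)\psi = (\nabla_{x,y}\vartheta)|_{z=\eta}$. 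These identifications are the workhorse of the argument.

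Next, for each small parameter $s$, let $\vartheta_s$ be the harmonic extension of $\psi$ to the perturbed domain with free surface $\{z=\eta+s\bar\eta\}$, so that $\vartheta_s(x,y,\eta(x,y)+s\bar\eta(x,y))=\psi(x,y)$, $\Delta\vartheta_s=0$, and $\p_z\vartheta_s|_{z=-h}=0$. Differentiating the Dirichlet trace identity at $s=0$ shows that $\dot\vartheta := \tfrac{d}{ds}\vartheta_s|_{s=0}$ is harmonic, satisfies the same Neumann condition at the bottom, and has surface trace
\begin{equation}
\dot\vartheta|_{z=\eta} = -\bar\eta\, B(\eta)\psi.
\end{equation}
I would then differentiate the formula
\begin{equation}
G(\eta+s\bar\eta)\psi = \bigl(\p_z\vartheta_s - \nabla_{x,y}\vartheta_s\cdot\nabla(\eta+s\bar\eta)\bigr)\big|_{z=\eta+s\bar\eta}
\end{equation}
with respect to $s$ at $s=0$. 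The terms splitting off the surface factor $z=\eta$ produce $G(\eta)\dot\vartheta|_{z=\eta} = -G(\eta)(\bar\eta B(\eta)\psi)$, while the remaining pieces collect into
\begin{equation}
-\nabla\bar\eta\cdot V(\eta)\psi - \bar\eta\bigl(\Delta_{x,y}\vartheta + \p_z\nabla_{x,y}\vartheta\cdot\nabla\eta\bigr)\big|_{z=\eta} + \bar\eta\,\p_z^2\vartheta\big|_{z=\eta}.
\end{equation}

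The final step uses harmonicity $\p_z^2\vartheta = -\Delta_{x,y}\vartheta$ to cancel one Laplacian term and reorganizes what remains as the divergence $\cnx_{x,y}(\bar\eta V(\eta)\psi)$, since $\cnx(V(\eta)\psi) = (\Delta_{x,y}\vartheta)|_{z=\eta} + (\p_z\nabla_{x,y}\vartheta)|_{z=\eta}\cdot\nabla\eta$ by the chain rule applied to $V(\eta)\psi = (\nabla_{x,y}\vartheta)|_{z=\eta}$. Combining everything yields \eqref{shapederi:form}. The main obstacle is bookkeeping: the chain rule produces a number of boundary terms from the moving evaluation level $z=\eta+s\bar\eta$ and from $s$-dependence of the Dirichlet datum's location, and one must be careful to invoke harmonicity at exactly the right place so that the surface Laplacian pieces assemble into a horizontal divergence. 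A technical remark is that although one works formally with $\vartheta$ on the fluid domain, the identity holds at the $H^s$ regularity provided by Theorem \ref{theo:Stokes} together with standard elliptic regularity for \eqref{elliptic:G}, so the limit in the definition of $G'(\eta)\bar\eta\psi$ exists in a suitable Sobolev space.
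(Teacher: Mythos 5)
The paper does not actually prove this theorem; it simply cites Theorem~3.20 of Lannes and Proposition~2.11 of Alazard--Burq--Zuily. Your proposal fills in the standard proof that those references contain, and your strategy is exactly right: identify $B(\eta)\psi$ and $V(\eta)\psi$ with the surface values of $\p_z\vartheta$ and $\nabla_{x,y}\vartheta$, derive the Dirichlet datum $\dot\vartheta|_{z=\eta}=-\bar\eta\,B(\eta)\psi$ for the domain variation $\dot\vartheta:=\tfrac{d}{ds}\vartheta_s|_{s=0}$, differentiate the DNO formula in $s$, isolate $G(\eta)(\dot\vartheta|_{z=\eta})$, and use harmonicity to collapse what is left into a horizontal divergence. (Note in passing that the theorem as stated carries a typo, $V(\eta)\psi:=\nabla\psi-B\nabla\psi$, which should read $\nabla\psi-B\nabla\eta$; you implicitly used the correct definition, since only then is $V(\eta)\psi=(\nabla_{x,y}\vartheta)|_{z=\eta}$.)

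There is a small slip in your second displayed equation for the ``remaining pieces.'' Differentiating the DNO formula in $s$, the terms not absorbed into $G(\eta)(\dot\vartheta|_{z=\eta})$ are
\begin{equation*}
\bar\eta\,\p_z^2\vartheta\big|_{z=\eta}\;-\;\bar\eta\,\bigl(\p_z\nabla_{x,y}\vartheta\bigr)\big|_{z=\eta}\cdot\nabla\eta\;-\;\bigl(\nabla_{x,y}\vartheta\bigr)\big|_{z=\eta}\cdot\nabla\bar\eta,
\end{equation*}
with no $-\bar\eta\,\Delta_{x,y}\vartheta|_{z=\eta}$ present. Inserting harmonicity $\p_z^2\vartheta=-\Delta_{x,y}\vartheta$ once converts the first term into $-\bar\eta\,\Delta_{x,y}\vartheta|_{z=\eta}$, after which the three pieces assemble, via the chain-rule identity $\cnx\bigl(V(\eta)\psi\bigr)=(\Delta_{x,y}\vartheta)|_{z=\eta}+(\p_z\nabla_{x,y}\vartheta)|_{z=\eta}\cdot\nabla\eta$ you state, exactly into $-\cnx(\bar\eta\,V(\eta)\psi)$. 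As you wrote it, the $\Delta_{x,y}\vartheta$ term appears twice (once explicitly and once hidden in the $\p_z^2\vartheta$ term), so your ``cancel one Laplacian'' step would actually double rather than cancel. This is a bookkeeping slip rather than a gap in the method: once the extraneous $-\bar\eta\,\Delta_{x,y}\vartheta$ is removed from the intermediate display, your argument delivers \eqref{shapederi:form} as intended and matches the content of the cited references.
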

\begin{proof} See Theorem 3.20 in \cite{Lannes} and Proposition 2.11 in \cite{ABZ1}.    
\end{proof} 
Using the formulas \eqref{shapederi:form}, \eqref{def:BV:0}, and the fact that the Stokes wave  is independent of the transverse variable $y$, we  {\it linearize} \eqref{ww} around it to obtain 
\begin{align}
&\p_t \ol\eta=\frac{\delta F_1(\eta^*, \psi^*, c^*)}{\delta(\eta, \psi)}(\ol\eta, \ol \psi)=\p_x\big((c^*-V^*)\ol\eta\big)+G(\eta^*) (\ol\psi-{B^*} \ol\eta),\label{lin:eta}\\
&\p_t\ol \psi=\frac{\delta F_2(\eta^*, \psi^*, c^*, P^*)}{\delta(\eta, \psi)}(\ol\eta, \ol \psi)=(c^*-{V^*})\p_x\ol\psi +{B^*}G(\eta^*)(\ol\psi-{B^*} \ol\eta)-{B^*}\p_x{V^*}\ol\eta-g\ol\eta,\label{lin:psi}
\end{align}
where
\bq\label{def:B*V*}
B^*:=B(\eta^*)\psi^*=\frac{G(\eta^*)\psi^*+\p_x\psi^*\p_x\eta^*}{1+|\p_x\eta^*|^2},\qquad V^*:=V(\eta^*)\psi^*=\p_x\psi^*-B^*\p_x\eta^*.
\eq 
A similar derivation was done in Lemma 3.2, \cite{NguyenStrauss} for perturbations $\ol{\eta}$ and $\ol{\psi}$ depending only on $x$. 
%
It is worthwhile noting that in fact  $B^*$ and $V^*$ are the vertical and horizontal 
components, respectively,  of the fluid velocity of the Stokes wave at the free surface $\{z=\eta^*(x)\}$. 
Where $G(\eta^*)$ acts on functions of $(x, y)$ we consider $\eta^*(x, y)\equiv \eta^*(x)$. 
We change variables to the so-called {\it Alinhac's good unknowns} 
\bq\label{def:v12}
v_1(x, y)=\ol\eta\quad\text{and}~ v_2(x, y)=\ol\psi-{B^*}\ol\eta
\eq
satisfying
 \begin{align}\label{eq:v1}
&\p_t v_1=\p_x\big((c^*-{V^*})v_1\big)+G(\eta^*)v_2,\\ \label{eq:v2}
&\p_t v_2=-\big(g+({V^*}-c^*)\p_x{B^*} \big)v_1+(c^*-{V^*})\p_xv_2.
\end{align}

Choosing a simple form for the transverse perturbations, we consider perturbations 
$\ol{\eta}$ and $\ol{\psi}$ that have wave number $\alpha \in \Rr\setminus\{0\}$ in the transverse variable $y$: 
\bq
\ol{\eta}(x, y)=\wt\eta(x)e^{i\alpha y},\quad \ol{\psi}(x, y)=\wt\psi(x)e^{i\alpha y}.  
\eq
Here $\wt\eta$ and $\wt \psi$ are $2\pi$-periodic functions, so the perturbations have the same longitudinal wave length as that of the Stokes wave.   Consequently, the good unknowns have the form
\bq
v_1(x, y)=\wt\eta(x)e^{i\alpha y}:=\wt{v_1}(x)e^{i\alpha y},\quad v_2(x, y)=(\wt\psi-{B^*}\wt\eta)e^{i\alpha y}:=\wt{v_2}(x)e^{i\alpha y}.
\eq
Then, in the linearized system \eqref{eq:v1}-\eqref{eq:v2}, the most difficult term to analyze is $G(\eta^*)(\wt{v_2}(x)e^{i\alpha y})$. To handle this term, we shall flatten the surface 
$\{z=\eta^*(x)\}$  using  a  Riemann mapping. 
For the sake of clarity, from now on we denote the independent variables in the physical space 
 by $(X,y,Z)$.   The {\it Riemann mapping} is as follows. 
 \begin{theo}  
 \label{prop:Riemann}
 There exists a  holomorphic bijection $X(x, z)+iZ(x, z)$ 
 from a strip $\Omega_\eps=\{(x, z)\in \Rr^2: -h_\eps<z<0\}$  
 onto  $\Omega=\{(X,Z)\in \Rr^2: -h<Z<\eta^*(X)\}$ with the following properties.  
 \begin{itemize}
 \item[(i)] 
 $X(x+2\pi, z)=2\pi +X(x, z)$ and $Z(x+2\pi, z)=Z(x, z)$ 
 for all $(x, z)\in \Omega_\eps$; 
 \newline $X$ is odd in $x$ and $Z$ is even in $x$;
 \item[(ii)] 
 $X+iZ$ maps  $\{(x, 0): x\in \Rr\}$ onto the surface $S=\{(x, \eta^*(x)): x\in \Rr\}$ and maps $\{(x, -h_\eps): x\in \Rr\}$ onto the bottom $\{(x, -h): x\in \Rr\}$.
 \item[(iii)] 
 Defining the ``Riemann stretch" as 
 \bq\label{def:zeta}
 \zeta(x)=X(x, 0),
 \eq 
we have the Fourier expansions
 \bq\label{z1}
X(x, z)=x-\frac{i}{2\pi}\sum_{k\in \Zz\setminus\{0\}}e^{ikx}\mathrm{sign}(k) 
\frac{ \cosh(|k|(z+h_\eps) }  { \sinh(|k|h_\eps) } 
\widehat{\eta^*\circ \zeta}(k)
\eq
\bq\label{z2}
Z(x,z) = z+ h_\eps-h + \frac{1}{2\pi} 
\sum_{k\ne 0} e^{ikx} \frac{\sinh(|k|(z+h_\eps)}{\sinh(|k|h_\eps)} \widehat{\eta^*\circ\zeta}(k) .  
\eq
\item[(iv)] The ``conformal depth" $h_\eps$ is given by 
\bq   \label{depth} 
h_\eps = h + \frac1{2\pi} \int_0^{2\pi} \eta^*( \zeta(x)) \ dx.  
\eq
 \end{itemize}
Here we have defined  the Fourier transform as 
\[
\widehat{f}(k)=\int_\T f(x)e^{-ikx}dx;
\]
 \end{theo}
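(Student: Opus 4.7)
The plan is to construct the conformal bijection explicitly via Fourier series, treating the trace $\zeta(x) := X(x,0)$ and the conformal depth $h_\eps$ as coupled principal unknowns. Writing $F(w) := X(x,z) + iZ(x,z) = w + i(h_\eps - h) + F_0(w)$, one seeks $F_0$ holomorphic and $2\pi$-periodic in $x$ subject to $\IM F(x - ih_\eps) = -h$ on the bottom and $\IM F(x, 0) = \eta^*(\zeta(x))$ on the top. Harmonic functions on the strip $\Omega_\eps$ that are $2\pi$-periodic in $x$ admit the standard separation-of-variables representation in terms of $\cosh$ and $\sinh$ of $|k|(z + h_\eps)$; imposing the above mixed boundary data determines $Z$ uniquely as \eqref{z2}, and its harmonic conjugate $X$, normalized by the quasi-periodicity $X(x + 2\pi, z) = 2\pi + X(x, z)$ and oddness in $x$, is exactly \eqref{z1}.

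The self-consistency condition $\zeta(x) = X(x,0)$ together with \eqref{depth} then yields the coupled nonlinear system
\begin{equation*}
\zeta(x) - x = H_{h_\eps}[\eta^* \circ \zeta](x), \qquad h_\eps = h + \frac{1}{2\pi}\int_0^{2\pi} \eta^*(\zeta(x))\,dx,
\end{equation*}
where $H_{h_\eps}$ is the Fourier multiplier with symbol $-i\,\sign(k)\coth(|k|h_\eps)$ for $k \ne 0$ (and zero at $k = 0$). Since $H_{h_\eps}$ is bounded on $H^s(\T)$ uniformly for $h_\eps$ in compact subsets of $(0,\infty)$ and $\|\eta^*\|_{H^s} = O(\eps)$ by Theorem~\ref{theo:Stokes}, a contraction mapping argument in $H^s(\T) \times \Rr$ for $s > 5/2$ yields the unique solution $(\zeta, h_\eps)$ with $\zeta(x) = x + O(\eps)$ and $h_\eps = h + O(\eps^2)$. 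Holomorphy of $F = X + iZ$ in $w = x + iz$ is then verified mode-by-mode from the Cauchy--Riemann equations: within each Fourier mode, the $\cosh$ and $\sinh$ terms conspire so that $\p_x F + i\p_z F = 0$. The parity claims in (i) follow because the evenness of $\eta^*$ propagates through the fixed-point equation to force $\zeta$ to be odd and $\widehat{\eta^* \circ \zeta}(k)$ to be real and even in $k$, while the boundary mappings in (ii) are immediate: at $z = -h_\eps$ the $\sinh$ factor in \eqref{z2} vanishes, giving $Z = -h$, and at $z = 0$ it reduces to $Z(x,0) = \eta^*(\zeta(x)) = \eta^*(X(x,0))$.

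The main obstacle is upgrading local invertibility to global bijectivity onto $\Omega$. For small $\eps$, the complex derivative $\p_w F = 1 + O(\eps)$ is nowhere zero, so $F$ is a local diffeomorphism; combined with the quasi-periodicity, it descends to a map from the cylinder $\T \times (-h_\eps, 0)$ onto $\Omega$ modulo $2\pi\Zz$. The argument principle applied to $F(w) - W_0$ along the boundary of one fundamental period cell---using the boundary mappings in (ii) together with the winding count $+1$ in the $x$-direction forced by the quasi-periodicity---shows that every point in the quotient has exactly one preimage, producing the desired bijection.
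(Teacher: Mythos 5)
Your proposal follows essentially the same route as the paper: derive the explicit Fourier representations of $X$ and $Z$ from the harmonic/Cauchy--Riemann structure on the strip, reduce self-consistency to a nonlinear fixed-point equation for $\zeta$ (equivalently $\zeta - x$) with $h_\eps$ tied to the zeroth Fourier mode of $\eta^*\circ\zeta$, and solve by contraction in $H^s(\T)$ for small $\eps$. The one place you go beyond the paper is the argument-principle justification of global bijectivity, which the paper leaves as ``it can be directly checked''; this is a welcome addition, while your contraction step is stated rather than carried out with the explicit estimates the paper gives.
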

 \begin{proof}   

{\bf 1.} Assume first that there exist $h_\eps>0$ and a conformal mapping $(X, Z): \Omega\to \Omega_\eps$ such that (i) and (ii) hold. We will prove (iii) and (iv).  For brevity, denote $f(x) = X(x,0)-x$.  We have $\Delta X=0$ and  $\partial_z X(x,-h_\eps)=0$ 
as well as $\Delta Z=0$ and  $Z(x,-h_\eps)=-h$.  By solving the Laplace problem for $X$ and using the Cauchy-Riemann equations,  we find
\bq  \label{capX}
X(x,z)-x = \frac1{2\pi} \sum_k \hat f(k) \frac{\cosh(|k|(z+h_\eps)}{\cosh(|k|h_\eps)} e^{ikx}   
\eq
and 
\bq  \label{capZ} 
Z(x,z) = z+h_\eps-h + \frac{i}{2\pi} 
\sum_{k\ne 0} \hat f(k)\ \sign(k) \frac{\sinh(|k|(z+h_\eps)}{\cosh(|k|h_\eps)} e^{ikx} .  
\eq 
Furthermore, on top we have 
\bq  \label{capZ0}
\eta^*(\zeta(x)) = Z(x,0) =    h_\eps-h + \frac{i}{2\pi} 
\sum_{k\ne 0} \hat f(k)\ \sign(k) \tanh(|k|h_\eps) e^{ikx},
\eq 
where $\zeta(x)=X(x, 0)$. Thus $h_\eps-h$ equals the average of $\eta^*\circ\zeta$, which is the same as \eqref{depth}.  
For $k\ne0$, \eqref{capZ0} gives us 
\bq\label{hatf:X}
\hat f(k) = -i\frac{\sign(k)}{\tanh(|k|h_\eps)}\ \widehat{\eta^*\circ\zeta}(k). 
\eq
Substituting  this formula for $\hat f$ into \eqref{capX} and \eqref{capZ}, and noting that $X(x, z)-x$ is odd in $x$,  we obtain \eqref{z1} and \eqref{z2}. 

Thus we have derived the formulas \eqref{z1}, \eqref{z2}, and \eqref{depth}. Moreover, by evaluating  \eqref{z1} at $z=0$ and  invoking \eqref{depth}, we obtain
\bq\label{zeta:fixedpoint}
g(x):= 
\zeta(x)-x=\frac{i}{2\pi}\sum_{k\ne 0}e^{ikx} \frac{\mathrm{sign}(k)}{\tanh(|k| h_\eps)}  
\widehat{\eta^*\circ (I+g} (k), 
\eq 
where $g$ is periodic and $Ix=x$.

{\bf 2.} Now we prove the existence of $h_\eps$ and $(X, Z)$. 
Starting with arbitrary $g\in H^s(\T)$ and $s\ge1$, we define $w=\eta^*\circ(I+g):=\sG_0(g)$, 
which occurs at the right end of \eqref{zeta:fixedpoint}.  
We notice in view of \eqref{zeta:fixedpoint} that $g=\zeta -\I$ can be defined as the fixed point of the nonlinear mapping $g \to \sF_0\sG_0$, where 
\[
\sF_0(w):= -\frac{i}{2\pi}\sum_{k\ne 0}e^{ikx} \frac{\mathrm{sign}(k)}{\tanh(|k|(h+(2\pi)^{-1}\hat{w}(0))) }\hat{w}(k). 
\]
There exists $r=r(h, s)$ sufficiently small so that  if $\| w\|_{H^s(\T)}\le r_0$ then 
$(2\pi)^{-1} |\hat{w}(0)|\le \frac{h}{2}$ and we have
\[
\| \sF_0(w)\|_{H^s(\T)}\le C_1\| w\|_{H^s(\T)},\quad C_1=C_1(s, h).
\]
On the other hand,  there exists $C_2>0$ independent of $\eps\in (0, 1)$ such that for any $g\in H^s(\T)$, we have
\[
\| \eta^*(\I+g)\|_{H^s(\T)}\le C_2\eps (1+\| g\|_{H^s(\T)}).
\]
Thus $\sF_0\sG_0$ maps the ball $B(0, r_0)\subset H^s(\T)$ to itself provided $\eps<\frac{r}{C_1C_2(1+r)}$. 

For the contraction of $\sF_0\sG_0$ we consider $w_1$, $w_2\in B(0, r)$. Using the mean value theorem for $\tanh(|k|(h+\cdot)$, we find 
\begin{align*}
&\left|\frac{\hat{w_1}(k)}{\tanh(|k|(h+(2\pi)^{-1} \hat{w_1}(0))) }-\frac{\hat{w_2}(k)}{\tanh(|k|(h+(2\pi)^{-1}\hat{w_2}(0))) }\right|\le C|\hat{w_1}(k)-\wh{w_2}(k)|\\
&\qquad+C|\hat{w_1}(k)\frac{|k|}{\cosh^2(|k|(h+\tt))}|\hat{w_1}(0)-\hat{w_2}(0)|
\end{align*}
for some $\tt$ between $(2\pi)^{-1}\hat{w_1}(0)$ and $(2\pi)^{-1}\hat{w_2}(0)$. In particular, $|\tt|\le \frac{h}{2}$, and hence 
\[
\frac{|k|}{\cosh^2(|k|(h+\tt))}\le C
\]
 since $\cosh(x)$ decays exponentially as $x\to \infty$. We deduce that 
\[
\| \sF_0(w_1)-\sF_0(w_2)\|_{H^s(\T)}\le C_3 \| w_1-w_2\|_{H^s(\T)}(1+\| w_1\|_{H^s(T)}),\quad C_3=C_3(s, h).
\]
This implies 
\begin{align*}
\| \sF_0\sG_0 (g_1))-\sF_0\sG_0(g_2))\|_{H^s(\T)}&\le C_3 \| \eta^*(\I+g_1)-\eta^*(\I+g_2)\|_{H^s(\T)}(1+\|\eta^*(\I+g_1)\|_{H^s(\T)})\\
&\le C_4\eps \| g_1-g_2\|_{H^s(\T)}(1+r).
\end{align*}
Therefore, $\sF_0\sG_0$ is a contraction on $B(0, r)\subset H^s(\T)$ provided
\[
\eps <\min\left\{\frac{r}{C_1C_2(1+r)}, \frac{1}{2C_4(1+r)}\right\}.
\]
We conclude that $g=\zeta-\I$ is the unique fixed point of $\sF_0\sG_0$ in $B(0, r)$. Using $\zeta$ we  define $h_\eps$, $X$, and $Z$ by the formulas \eqref{depth}, \eqref{z1}, and \eqref{z2}. Then it can be directly checked that $X$ and $Z$ satisfy the desired properties. 
\end{proof}

 \begin{rema}\label{rema:XZzeta}
$X$, $Z$, and $\zeta$ are  analytic in the parameter $\eps$  with values in  Sobolev spaces. Indeed, 
we see that the right side of \eqref{zeta:fixedpoint}
  is  analytic in $\eps$ with values in Sobolev spaces  since $\eta^*$ is so. The analyticity of $\zeta$ in $\eps$ then follows from the Analytic Implicit Function Theorem. Next, we return to the formula \eqref{z1} in which  $\eta^*\circ \zeta$ is now analytic in $\eps$ with values in any Sobolev space provided $\eps$ is small enough.  Consequently $\widehat{\eta^*\circ \zeta}(k)$ is analytic in $\eps$ for all $k$, and the series in \eqref{z1} converges absolutely and is analytic with values in Sobolev spaces. 
\end{rema} 
  

\subsection{The transformed Dirichlet-Neumann operator}
This section is devoted to the proof of the following theorem.
\begin{theo}\label{theo:flattenG} 
For fixed  $\alpha\ne 0$, denote $\beta=\alpha^2$. Fix any depth $h\in (0, \infty)$. There exists $\iota>0$ such that for each $\eps \in (-\iota, \iota)$ there exists  a bounded linear operator $\cG_{\eps, \beta}\in \cL(H^\mez(\T), H^{-\mez}(\T))$   with the following properties.  Let $s\ge\frac12$.  
\begin{itemize}
\item[(i)] 
There exists $0<\eps_0(s)\le \iota$ such that for all $\eps \in (-\eps_0(s), \eps_0(s))$ 
the operator  $\cG_{\eps, \beta}$ is bounded from   $\cL(H^s(\T)$ to $H^{s-1}(\T))$. 
In addition, we have
\bq\label{cG:even}
(\cG_{\eps,\beta}(\bar f(-\cdot)))(x) = \overline{(\cG_{\eps,\beta} f)(-x)}
\eq
and 
\bq\label{form:G0}
\cG_{0, \beta}=\sqrt{|D|^2+\beta}\ \tanh(h\sqrt{|D|^2+\beta}).
\eq
If $s=1$, the operator $\cG_{\eps, \beta}$ is self-adjoint on $L^2(\T)$.  
\item[(ii)] 
The mapping
\bq\label{mappimg:cG}
 (-\eps_0(s), \eps_0(s))\times (0, \infty)\ni  (\eps,\beta) \mapsto \cG_{\eps, \beta}\in \mathcal{L}(H^{s}(\T), H^{s-1}(\T))
\eq
 is analytic.

  \item[(iii)]  If $s\ge 1$ and $f_0\in H^s(\T)$, we have the identity 
\bq\label{flatten:G}
\cG_{\eps, \beta}(f_0\circ \zeta)(x)=e^{-i\alpha y}\zeta'(x)\big[ G(\eta^*)(f_0e^{i\alpha y})\big]\vert_{(\zeta(x), y)}
\eq
for a.e. $x\in \T$ and for all $y\in \Rr$. 
\end{itemize}
We emphasize that the operator $\cG_{\eps, \beta}$ and the constants $\iota$ and $\eps_0(s)$ all depend on the depth $h$.
\end{theo}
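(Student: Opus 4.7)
The plan is to construct $\cG_{\eps,\beta}$ as the Dirichlet-to-Neumann operator on the flat strip $\Omega_\eps$ for a conformally transformed elliptic equation, verify the algebraic properties by direct computation, and prove analyticity by first flattening the $\eps$-dependent strip. Starting from $f_0\in H^s(\T)$, the harmonic extension of $f_0(X)e^{i\alpha y}$ in the Stokes domain $D$ takes the form $u(X,Z)e^{i\alpha y}$ with $u$ solving $(\p_X^2+\p_Z^2-\beta)u=0$ in the 2D fluid domain $\Omega$, $u|_{Z=\eta^*(X)}=f_0$, $\p_Z u|_{Z=-h}=0$. Setting $v(x,z):=u(X(x,z),Z(x,z))$ and applying the Cauchy--Riemann relations from Theorem \ref{prop:Riemann}, the Laplacian conformally transforms:
\begin{equation*}
\Delta_{x,z} v=\beta J(x,z) v,\qquad J(x,z):=X_x^2+X_z^2,
\end{equation*}
on $\Omega_\eps$, with $v|_{z=0}=f_0\circ\zeta$ and $\p_z v|_{z=-h_\eps}=0$. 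I then \emph{define} $\cG_{\eps,\beta}(g):=\p_z v(\cdot,0)$ for $v$ the unique $H^1(\Omega_\eps)$-solution with top data $g$. Existence for $g\in H^{1/2}(\T)$ follows from Lax--Milgram on the coercive Hermitian form $a_\eps(v,\varphi)=\int_{\Omega_\eps}(\nabla v\cdot\overline{\nabla\varphi}+\beta J v\overline\varphi)\,dx\,dz$, after subtraction of an $H^1$-lift of $g$; the conormal derivative is then read off variationally, $\langle\cG_{\eps,\beta}g,\varphi\rangle=a_\eps(v_g,\Phi)$ for any $H^1(\Omega_\eps)$-lift $\Phi$ of $\varphi$, producing $\cG_{\eps,\beta}\in\cL(H^{1/2}(\T),H^{-1/2}(\T))$.

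Properties (i) and (iii) then follow by direct computation. The identity \eqref{flatten:G} is a chain-rule calculation at $z=0$: using $X(x,0)=\zeta(x)$, $Z(x,0)=\eta^*(\zeta(x))$ together with $X_z=-Z_x$ and $Z_z=X_x$, one finds $v_z(x,0)=\zeta'(x)[u_Z-u_X\p_X\eta^*]\bigr|_S=\zeta'(x)e^{-i\alpha y}\bigl[G(\eta^*)(f_0 e^{i\alpha y})\bigr]\bigr|_{(\zeta(x),y)}$. The formula \eqref{form:G0} holds at $\eps=0$ because $J\equiv 1$, $h_\eps=h$, $\zeta=\I$, reducing the problem to a constant-coefficient Helmholtz equation solvable by Fourier series. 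Self-adjointness on $L^2$ when $s=1$ is immediate from the Hermitian symmetry of $a_\eps$. For \eqref{cG:even}, I note that $J$ is even in $x$: by Theorem \ref{prop:Riemann}(i), $X$ is odd and $Z$ even in $x$, so $X_x$ is even and $X_z=-Z_x$ is odd, whence $J$ is even. Then $w(x,z):=\overline{v_f(-x,z)}$ solves the problem with data $\bar f(-\cdot)$, giving the stated conjugation identity. Standard elliptic regularity lifts the baseline bound to $\cL(H^s(\T),H^{s-1}(\T))$ for $s\ge\frac12$, provided $\eps$ is small enough that $J\in H^s$.

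The main obstacle is analyticity (ii), whose essential difficulty is the $\eps$-dependence of the domain $\Omega_\eps$. I remove this by the $\eps$-dependent dilation $\tau:=(h/h_\eps)z$, which maps $\Omega_\eps$ bijectively onto the fixed strip $\Omega_0:=\Rr\times(-h,0)$. The rescaled equation
\begin{equation*}
\p_x^2 v+\left(h/h_\eps\right)^{2}\p_\tau^2 v=\beta\,\tilde J(x,\tau;\eps)\,v\quad\text{in }\Omega_0,\qquad \tilde J(x,\tau;\eps):=J(x,(h_\eps/h)\tau),
\end{equation*}
lives on a fixed domain with boundary conditions at $\{\tau=0\}$ and $\{\tau=-h\}$. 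By Remark \ref{rema:XZzeta}, $h_\eps$ and $\tilde J$ are real-analytic in $\eps$ with values in Sobolev spaces, and $\beta$-dependence is polynomial, so the corresponding operator family $A_{\eps,\beta}:\{V\in H^1(\Omega_0):V|_{\tau=0}=0\}\to H^{-1}$ is jointly analytic in $(\eps,\beta)$ and, by coercivity, invertible in a neighborhood of $(0,\beta_0)$ for each $\beta_0>0$. A Neumann series (or the analytic implicit function theorem) then yields that $v$ depends analytically on $(\eps,\beta)$ as an $H^1(\Omega_0)$-valued function of $g$. The final subtle step is to transfer this interior analyticity to $\cG_{\eps,\beta}$ on the boundary: since the trace of the normal derivative from $H^1(\Omega_0)$ does not land in $H^{-1/2}(\T)$ in the ordinary sense, I use the variational identification above and invoke the appendix trace result Proposition \ref{prop:trace} to identify it with the conormal derivative. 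Elliptic regularity then propagates the analyticity into $\cL(H^s(\T),H^{s-1}(\T))$-valued analyticity for $s\ge\frac12$.
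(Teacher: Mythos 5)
Your construction, the verification of (i) and (iii), and the dilation $\tau = (h/h_\eps)z$ onto the fixed strip are exactly the paper's; your aside that $J$ is even because $X$ is odd and $Z$ is even is a correct explication of the paper's terse remark. The divergence is in how you handle analyticity. You propose a Neumann-series/operator-family argument, while the paper constructs the double power series $\Psi(\eps,\beta)=\sum_{j,\ell}\eps^j(\beta-\beta^0)^\ell\Psi_{j,\ell}$ by hand, proves convergence via the inductive bound \eqref{induction:Tt}, traces each coefficient $\Psi_{j,\ell}$ separately, and then \emph{interpolates} between $s=\mez$ and $s=s_0>1$ to cover $s\in[\mez,1]$. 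Neither your Neumann series nor the paper's inductive scheme is ``elementary''; the two are genuinely different routes to the same end.

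Two concrete gaps. First, ``elliptic regularity then propagates the analyticity into $\cL(H^s,H^{s-1})$-valued analyticity for $s\ge\frac12$'' is not a step that follows: elliptic regularity gives $H^{s+\mez}$-bounds for each \emph{fixed} $(\eps,\beta)$, but operator-norm analyticity of $(\eps,\beta)\mapsto A_{\eps,\beta}^{-1}\in\cL(H^{s-\tdm},H^{s+\mez})$ has to be re-established at each regularity level (boundedness of the elliptic constant, joint control of the $\eps$-dependent principal coefficient $(h/h_\eps)^2\p_z^2$, and the boundary trace at $s=\mez$). The paper supplies exactly this via \eqref{norm:Psijell}--\eqref{claim:tracePsijell} and an interpolation argument; your proposal does not. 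Second, you cite Proposition \ref{prop:trace} for the $s=\mez$ trace, but the paper's appendix actually constructs an \emph{extended} trace $\tilde\gamma$ in Lemma \ref{lemm:tracetilde}, designed for vector fields with $\cnx u = G + \p_x^2F$, precisely because the power-series coefficients $\Psi_{j,\ell}$ satisfy equations with $\p_z^2\Psi_{k,\ell}$ on the right-hand side, so $\na\Psi_{j,\ell}\notin H_\cnx$ at the $H^1$ level. Your operator-family route could in principle avoid $\tilde\gamma$ by tracing the full solution via the vector field $u=(\p_x\Psi,(h/h_\eps)^2\p_z\Psi)$, whose divergence $\beta\tilde J\Psi$ does lie in $L^2$ and depends analytically on $(\eps,\beta)$ --- but you would still need to argue the analyticity of $(\eps,\beta)\mapsto u\in H_\cnx(\Omega_0)$ in operator norm, which you do not do. As written, the proof does not close; filling these two steps is where the real work lies.
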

In the infinite depth case, Theorem \ref{theo:flattenG} was proven in Theorem 2.7, \cite{CreNguStr}. The assertions (i) and (iii) can be obtained similarly to the infinite depth case. Regarding the analyticity of $G_{\eps, \beta}$ in $(\eps, \beta)$, the direct proof in \cite{CreNguStr} can be adapted to the finite depth case. However,  the low regularity  case $s=\mez$ would require a careful treatment, due to the $\eps$-dependence of the  conformal domain $\{-h_\eps<z<0\}$ and the  trace operator for vector fields not being in $H_{\cnx}$ (see \eqref{def:Hdiv}) which arise if one changes  $z\mapsto z\frac{h}{h_\eps}$ to cope with $h_\eps$. We will provide an alternative, shorter  proof using the  analytic implicit  function theorem, in the spirit of the proof of Lemma 2.2 in \cite{Groves}. We will need the following classical trace operator. 
\begin{prop}[\protect{Section  3.2, Chapter IV, \cite{BoyerFabrie}}]\label{prop:trace}
Let $U\subset \Rr^n$ be Lipschitz domain with compact boundary and denote 
\bq\label{def:Hdiv}
H_{\cnx}(U)=\{u\in L^2(U; \Rr^n): \cnx u\in L^2(U)\}.
\eq
There exists a bounded linear operator $\gamma: H_{\cnx}(U)\to H^{-\mez}(\p U)$ such that $\gamma(u)=u\cdot \nu$ for $u\in C^\infty_0(\overline{U})$, where $\nu$ is the unit outward normal to $\p U$. In particular, there exists a constant $C>0$ such that 
\bq\label{cont:tracegamma}
\| \gamma(u)\|_{H^{-\mez}(\p U)}\le C\Big(\| u\|_{L^2(U)}+\| \cnx u\|_{L^2(U)}\Big)\quad\forall u\in H_{\cnx}(U).
\eq
 Moreover, if $u\in H_{\cnx}(U)$ and $w\in H^1(U)$, then the Stokes formula 
\bq\label{Stokes}
\langle \gamma(u), \gamma_0(w)\rangle_{H^{-\mez}(\p U), H^\mez(\p U)}=\int_U u\cdot \na w+\int_U w\cnx u
\eq
holds, where $\gamma_0(w)$ is the trace of $w$.
\end{prop}
{\bf Proof of Theorem \ref{theo:flattenG}}
\\
 We assume without loss of generality that  $\alpha>0$ and denote $\T^2_\alpha=\T\times (\Rr/ 2\pi \alpha \Zz)$. 
Let $(X, Z): \Omega_\eps \to \Omega$ be the Riemann mapping  in Theorem \ref{prop:Riemann}. It has the Jacobian $\cJ=\cJ(x, z, \eps)=|\p_xX|^2+|\p_z X|^2$, which is analytic in $\eps$ near $0$. For an arbitrary function $f\in H^\mez(\T)$,  we will define 
the function $\Tt_f$ by solving 
\bq\label{system:Theta}  
  \begin{cases}
\Delta_{x, z} \Tt_f - \beta\cJ\Tt_f=0\quad\text{ in  } \Omega_\eps=\{(x, z)\in \Rr^2: x\in \T,~-h_\eps<z<0\},\\
\Tt_f(x, 0)=f(x),\\
\p_z\Theta_f(x,-h_\eps)=0. 
\end{cases}
\eq
Then we define the operator $\cG_{\eps,\beta}$ by
\bq\label{def:cG}
\cG_{\eps, \beta} f=\p_z\Tt_f(\cdot, 0),
\eq
where $\p_z\Tt_f(\cdot, 0)$ is the trace  $\gamma(\na \Tt_f)$ given in Proposition \ref{prop:trace}.
As shown in  the proof of Theorem 2.7 in \cite{CreNguStr}, for any $s\ge \mez$ there exists $\eps_0(s)>0$ such that  
\bq\label{bounded:cT}
\cG_{\eps, \beta}\in \cL(H^s(\T), H^{s-1}(\T))\quad \forall \eps \in (-\eps_0(s), \eps_0(s));
\eq 
moreover, $ \cG_{\eps, \beta}$ is self-adjoint on $L^2$ and it satisfies \eqref{cG:even} thanks to the fact that $\eta^*$ is even. 

 In order to prove \eqref{flatten:G}, we consider the problem 
\bq\label{sys:vartheta}
\begin{cases}
\Delta_{X, Y, Z}\vartheta=0\quad\text{in } \{(X, Y, Z)\in \Rr^3: (X, Y)\in \T^2_\alpha,~-h<Z<\eta^*(X)\},\\
\vartheta(X, Y, \eta^*(X))=f_0(X)e^{i\alpha Y},\\
\p_Z\vartheta(X, Y,  -h)=0.
\end{cases}
\eq 
For clarity, we are now denoting the physical variables by $(X,Y,Z)$.  
By \eqref{def:Gh}, we have
\bq\label{G:f0e}
G(\eta^*)(f_0e^{i\alpha Y})(X, Y)=\p_Z\vartheta(X, Y, \eta^*(X))-\p_X\vartheta(X, Y, \eta^*(X))\p_X\eta^*(X).
\eq
The solution of \eqref{sys:vartheta} is given by  $\vartheta(X, Y, Z)=\tt(X, Z)e^{i\alpha Y}$, where $\tt(X, Z)$ solves 
\bq\label{system:tt}
\begin{cases}
\Delta_{X,  Z}\tt -\alpha^2\tt=0\quad\text{in } \{(X, Z)\in \Rr^2: X\in \T,~-h<Z<\eta^*(X)\},\\
\tt(X, \eta^*(X))=f_0(X),\\
\p_Z\tt(X, -h)=0.
\end{cases}
\eq
Using the Riemann mapping $(X, Z): \Omega_\eps \to \Omega$ in Proposition \ref{prop:Riemann} to flatten the 
domain $\Omega$, we find that $\Tt(x, z):=\tt(X(x, z), Z(x, z))$ solves \eqref{system:Theta}  with $f=f_0\circ \zeta$, i.e. $\Tt=\Tt_{f_0\circ \zeta}$. Then, for $s>1$,  the identity \eqref{flatten:G} follows by using \eqref{G:f0e}, the chain rule, and the Cauchy-Riemann equations. The case $s=1$ is treated by an approximation argument. We refer to the proof of Theorem 2.7 in \cite{CreNguStr} for further details. 

Assume now that $\eps=0$, i.e. $\eta^*=0$. Then we have $\cJ=1$ and $h_\eps=h$ in view of \eqref{depth}. Consequently, by solving \eqref{system:Theta} using  Fourier transform in $x$, we find
\bq\label{pzTt:flat}
\p_z\Tt_f(\cdot, 0)=\sqrt{|D|^2+\beta}\ \tanh(h\sqrt{|D|^2+\beta})f.
\eq
This proves \eqref{form:G0}.

The remainder of this proof is devoted to the analyticity \eqref{mappimg:cG} of $\cG_{\eps, \beta}$. We denote
\[
\Omega_0:=\{(x, z): x\in \T,~z\in (-h, 0)\}.
\]
We fix $\beta_0>0$ and $f\in H^s(\T)$ with $s\ge \mez$. We first prove that for any  $F_0\in H^{s-\mez}(\Omega_0)$ and $G\in (H^{s-\mez}(\Omega_0))^2$, the problem 
\bq\label{system:defS}  
  \begin{cases}
\Delta_{x, z} v -\beta_0 v=F_0+\cnx G\quad\text{ in  } \Omega_0,\\
v(x, 0)=f(x),\\
\p_zv(x,-h)=G_2(x, -h)
\end{cases}
\eq
has a unique solution $v\in H^{s+\mez}(\Omega_0)$ in the following sense: $v\vert_{\{z=0\}}=f$ in the trace sense and  
\bq\label{weakform:v}
\int_{\Omega_0} \na_{x, z}v\cdot \na_{x, z}\varphi + \beta_0 v\varphi \,dxdz=\int_{\Omega_0} -F_0\varphi + G\cdot \na_{x, z}\varphi \, dxdz\quad\forall \varphi \in H^{1, 0}(\Omega_0),
\eq
where $H^{1, 0}(\Omega_0)=\{u\in H^1(\Omega_0):~u\vert_{\{z=0\}}=0\}$. Note that \eqref{weakform:v} implies $\Delta_{x, z}v-\cnx G=\cnx( \na_{x, z}v-G)=\beta_0v+F_0\in L^2(\Omega_0)$ in the sense of distributions, and hence 
\[
\int_{\Omega_0} (\na_{x, z}v-G)\cdot \na_{x, z}\varphi +\cnx(\na_{x, z}v-G)\varphi\, dxdz=0\quad\forall \varphi \in H^{1, 0}(\Omega_0).
\]
In view of the Stokes formula \eqref{Stokes}, we deduce that the normal trace of $\na_{x, z}v-G$ at $\{z=-h\}$ vanishes in $H^{-\mez}(\T)$, justifying the Neumann condition $\p_zv(x,-h)=G_2(x, -h)$.  

 For $s=\mez$, the existence and uniqueness of $v\in H^1(\Omega_0)$ follow from the Lax-Milgram theorem and the lifting operator from $H^s(\T^d)$ to $H^{s+\mez}(\Omega_0)$. Moreover, we have 
\[
\| v\|_{H^1(\Omega_0)}\le  C(\| F_0\|_{L^2(\Omega_0)}+\| G\|_{(L^2(\Omega_0))^2}+\| f\|_{H^\mez(\T)}).
\]
For $s>\mez$, it remains to obtain the regularity  $v\in H^{s+\mez}(\Omega_0)$. We decompose  $v=v_1+v_2$, where $v_1\in H^1(\Omega_0)$ solves 
\bq\label{system:defS:v1}  
  \begin{cases}
\Delta_{x, z} v_1 -\beta_0 v_1=F_0\quad\text{ in  } \Omega_0,\\
v_1(x, 0)=f(x),\\
\p_zv_1(x,-h)=0,
\end{cases}
\eq
and $v_2\in H^1(\Omega_0)$ solves 
\bq\label{system:defS:v2}  
  \begin{cases}
\Delta_{x, z} v_2-\beta_0 v_2=\cnx G\quad\text{ in  } \Omega_0,\\
v_2(x, 0)=0,\\
\p_zv_2(x,-h)=G_2(x, -h).
\end{cases}
\eq
Since $F_0\in H^{s+\mez}(\Omega_0)$ has nonnegative regularity, combining standard elliptic regularity with linear interpolation yields $v_1 \in H^{s+\mez}(\Omega_0)$. This is also the case for $v_2$ provided $s\ge \tdm$, so that $\cnx G\in H^{s-\tdm}(\Omega_0)$ has nonnegative regularity. Since the mapping $H^{s-\mez}(\Omega_0)\ni G\mapsto v_2\in H^{s+\mez}(\Omega_0)$ is linear, the remaining  case $s\in (\mez, \tdm)$  follows by linear interpolation between $s=\mez$ and $s=\tdm$.

 We denote the mapping $(F_0, G, f)\mapsto v$ by 
\bq\label{def:cS}
v=\mathcal{S}(F_0, G, f).
\eq
Since 
\[
\mathcal{S}(F_0, G, f)=\mathcal{S}(0, 0, f)+\mathcal{S}(F_0, 0, 0)+\mathcal{S}(0, G, 0)
\]
and each operator on the right-hand side is linear in its respective argument, it follows that 
\bq\label{analytic:cS}
\mathcal{S}: H^{s-\mez}(\Omega_0)\times (H^{s-\mez}(\Omega_0))^2\times H^s(\T^d)\to H^{s+\mez}(\Omega_0)\quad\text{is~ analytic}. 
\eq

 Returning to the problem \eqref{system:Theta}, in order to cope with the $\eps$-dependence of $\Omega_\eps$, we make the simple rescaling 
\[
\Theta(x, z; \eps, \beta)=\Psi\left(x, \frac{h}{h_\eps}z; \eps, \beta\right),
\quad \cJ(x, z ;\eps)=\tilde{J}\left(x, \frac{h}{h_\eps}z; \eps\right),
\]
so that $\Psi\equiv \Psi(\eps, \beta): \Omega_0\to \Rr$ satisfies 
\bq\label{system:Psi0}  
  \begin{cases}
\left(\p_x^2+\big(\frac{h}{h_\eps}\big)^2\p_z^2\right) \Psi- \beta\tilde{J}\Psi=0\quad\text{ in  } \Omega_0\ , \\ 
\Psi(x, 0)=f(x),\quad \p_z\Psi(x,-h)=0.
\end{cases}
\eq
Use of the chain rule in the definition \eqref{def:cG} of $\cG_{\eps, \beta}$ yields 
  \bq\label{cG:10}
  \tfrac{h_\eps}{h} \cG_{\eps,\beta}f = \p_z\Psi(\eps, \beta)\vert_{z=0}.
\eq
We rewrite \eqref{system:Psi0} as
\bq\label{system:Psi1}  
  \begin{cases}
\Delta_{x, z}\Psi-\beta_0 \Psi=(\beta\tilde{J}-\beta_0)\Psi+ \Big[1-\big(\frac{h}{h_\eps}\big)^2\Big]\p_z^2 \Psi \quad\text{ in  } \Omega_0\ , \\ 
\Psi(x, 0)=f(x),\quad \p_z\Psi(x,-h)=0. 
\end{cases}
\eq
Denoting $F_0=F_0(\Psi, \eps, \beta)=(\beta\tilde{J}-\beta_0)\Psi$ and 
\[
G=G(\Psi, \eps)=\begin{bmatrix} 0 \\  \big[1-\big(\frac{h}{h_\eps}\big)^2\big]\p_z \Psi\end{bmatrix},
\]
we  rewrite \eqref{system:Psi1} in the form of \eqref{system:defS}:
   \bq\label{system:Psi3n}  
  \begin{cases}
\Delta_{x, z}\Psi-\beta_0 \Psi= F_0+ \cnx G \quad\text{ in  } \Omega_0\ ,  \\ 
\Psi(x, 0)=0,\quad \p_z\Psi(x,-h)=G_2,
\end{cases}
\eq 
where the Neumann condition $\p_z\Psi(x,-h)=G_2= \big[1-\big(\frac{h}{h_\eps}\big)^2\big]\p_z \Psi$ is equivalent to $\p_z\Psi(x,-h)=0$.  Hence, in terms of the analytic operator $\mathcal{S}$ \eqref{def:cS}, we have  
\[
\Psi=\cS(F_0(\Psi, \eps, \beta), G(\Psi, \eps), f).
\]
For fixed $f\in H^s(\T)$, we denote 
\[
\cT(\Psi, \eps, \beta)=\Psi- \cS(F_0(\Psi, \eps, \beta), G(\Psi, \eps), f).
\]
Both $F_0$ and $G$ are linear in $\Psi$ and analytic in $(\eps, \beta)\in \mathcal{O}_s$, a neighborhood of $(0, \beta_0)$. Together with the analyticity \eqref{analytic:cS} of $\mathcal{S}$, we deduce that  $\cT:  H^{s+\mez}(\Omega_0)\times \mathcal{O}_s\to H^{s+\mez}(\Omega_0)$ is analytic. Set $\Psi_0=\mathcal{S}(0, 0, f)$. Since $F_0(\cdot, 0, \beta_0)\equiv 0$ and $G(\cdot,  0)\equiv 0$, we have $\cT(\Psi_0, 0, \beta_0)=0$ and $D_1\cT(\Psi_0, 0, \beta_0)=\mathrm{I}$. Therefore, by virtue of the analytic implicit function theorem, we deduce that 
\bq\label{analyticity:Psi}
\Psi=\Psi(\eps, \beta): \mathcal{O}_s\ni(\eps, \beta)\to H^{s+\mez}(\Omega_0)\quad\text{is~analytic},
\eq
where $\mathcal{O}_s$ can be shrunk if necessary. 

Next, we take the trace $\p_z\Psi(\eps, \beta)\vert_{z=0}$ to make the connection to $\cG_{\eps, \beta}f=\frac{h}{h_\eps}\p_z\Psi(\eps, \delta)\vert_{z=0}$.  If $s>1$, then combining \eqref{analyticity:Psi} (and a similar property for $\Psi_2$) with  the continuity of the standard trace operator $H^{s-\mez}(\Omega_0)\to H^{s-1}(\p \Omega_0)$, we deduce that 
\bq\label{analyticity:tracePsi}
\p_z\Psi(\eps, \beta)\vert_{z=0}: (\eps, \beta)\ni \mathcal{O}_s\to H^{s-1}(\T)\quad\text{is~analytic}
\eq
for some neighborhood $\mathcal{O}_s$ of $(\eps, \beta)=(0, \beta_0)$.  To treat the case  $s=\mez$ we consider the vector field 
\[
u=\begin{bmatrix}
\p_x\Psi(\eps, \beta)\\ \big(\frac{h}{h_\eps}\big)^2\p_z\Psi(\eps, \beta)
\end{bmatrix}.
\]
 Because $\cnx u=\beta \tilde{J} \Psi(\eps, \beta)\in H^1(\Omega_0)\subset L^2(\Omega_0)$ in view of \eqref{system:Psi0}, we have  that $u\in H_{\cnx}(\Omega_0)$, defined by \eqref{def:Hdiv}. Therefore, by virtue of Proposition \ref{prop:trace}, the trace $\gamma(u)=:\big(\frac{h}{h_\eps}\big)^2\p_z\Psi(\eps, \beta)\vert_{z=0}$
 is well-defined in $H^{-\mez}(\T)$. Therefore, \eqref{analyticity:tracePsi} is still valid for $s=\mez$. Viewing $\Psi(\eps, \beta)$ as the bounded linear operator  $f\mapsto \Psi$ solution of \eqref{system:Psi0}, we deduce with the aid of linear interpolation  that the map
 \[
 f\mapsto \p_z\Psi(\eps, \beta)\vert_{z=0}: (\eps, \beta)\ni \mathcal{O}_s\to \cL(H^s(\T), H^{s-\mez}(\T))
 \]
is analytic for all $s\ge \mez$. This concludes the proof of the analyticity \eqref{mappimg:cG} of $\cG_{\eps, \beta}$. $\square$

\subsection{The reformulated instability problem}
Theorem \ref{theo:flattenG} allows us to precisely formulate the instability problem as follows.  
Starting from the Riemann stretch \eqref{def:zeta}, 
it is convenient to first define the two auxiliary operators 
\bq
\zeta_{\sharp} f(x)=(f\circ \zeta_{})(x),\quad \zeta_{*}f(x)=\zeta_{}'(x)\zeta_{\sharp} f(x).
\eq
Then we define the new unknowns 
\bq\label{def:w12}
w_1(x)=\zeta_{*}\wt{v_1},\quad w_2(x)=\zeta_{\sharp} \wt{v_2}.
\eq
We apply $\zeta_{*}$ to \eqref{eq:v1} and $\zeta_{\sharp}$ to \eqref{eq:v2}, and use Theorem \ref{theo:flattenG} (iii) to have 
\[
\zeta_{}'(x)\big[ G(\eta^*)(\wt{v_2}e^{i\alpha y})\big]\vert_{(\zeta_{}(x), y)}=e^{i\alpha y}\cG_{\eps,\beta}(\wt{v_2}\circ \zeta_{})(x),
\]
that is, $\zeta_{*}G(\eta^*)(v_2)(x, y)=e^{i\alpha y}\cG_{\eps,\beta}(w_2)(x)$. We  thus obtain the equivalent linearized system
\begin{align}\label{eq:w1}
&\p_t w_1=\p_x\big(p_{}(x)w_1\big) 
+ \cG_{\eps,\beta} w_2,\\  \label{eq:w2}
&\p_t w_2=-\frac{1 +q_{}(x)}{\zeta_{}'(x)}w_1+p_{}(x)\p_xw_2,
\end{align}
 where  the variable coefficients 
\bq\label{def:pq}
p_{}:=\frac{c^*-\zeta_{\sharp} V^*}{\zeta_{}'},\quad q_{}:=-p_{}\p_x(\zeta_{\sharp} B^*)
\eq
have auxiliary dependence on $\eps$ and $h$.  

For the spectral analysis, we seek solutions of the linearized system \eqref{eq:w1}--\eqref{eq:w2} 
of the form $w_j(x, t)=e^{\ld t}u_j(x)$, thereby arriving at the  {\it eigenvalue problem}  
\bq\label{eiproblem}
\ld U=\cL_{\eps,\beta} U:= 
\begin{bmatrix} 
\p_x(p_{}(x)\cdot)  & \cG_{\eps,\beta}\\
-\frac{1 +q_{}(x)}{\zeta_{}'(x)}& p_{}(x)\p_x
\end{bmatrix}U,\quad U=
\begin{bmatrix}  u_1 \\ u_2 \end{bmatrix}.
\eq
Of course $\cL_{\eps, \beta}$ depends also on the depth $h$ but we will drop this dependence to ease  notation. By virtue of Theorem \ref{theo:flattenG} (i), we have $\cG_{\eps,\beta}\in \cL(H^1(\T), L^2(\T))$ is self-adjoint for $\eps\in (-\eps_0(1), \eps_0(1))$, $\beta\in (0, \infty)$, and $h > 0$. Therefore, we regard $\cL_{\eps,\beta}$, defined above,  as a bounded linear operator from  $(H^1(\T))^2$ to $(L^2(\T))^2$.   Moreover, $\cL_{\eps,\beta}$ can be written in Hamiltonian form
 \bq\label{def:Hamiltonian}
\cL_{\eps,\beta}=J\cH_{\eps,\beta},\quad J=\begin{bmatrix} 0& 1\\ -1 & 0\end{bmatrix},\quad \cH_{\eps,\beta}=\begin{bmatrix} \frac{1 +q_{}(x)}{\zeta_{}'(x)} & -p_{}(x)\p_x \\ \p_x(p_{}(x)\cdot) & \cG_{\eps,\beta}\end{bmatrix},
 \eq
 where $\cH_{\eps,\beta}$ is self-adjoint. All entries of $\cH_{\eps,\beta}$ depend on $\eps$ and $h$ but only the lower right corner depends on $\beta$. For the two-dimensional modulational instability,  $\beta=0$ and it was shown in \cite{NguyenStrauss} that $\cG_{\eps, 0}=|D|\tanh(h|D|)$ (see also Proposition \ref{prop:Rj} below).  However, when $\beta \ne 0$, $\cG_{\eps,\beta}$ is no longer a Fourier multiplier but a genuine pseudo-differential operator, which will be expanded up to $O(\eps^3)$ in Section \ref{Sec:R}. 
  \begin{defi}
 The Stokes wave $(\eta^*, \psi^*, c^*)$ is said to be unstable with respect to  transverse perturbations if there exists $\beta > 0$ (and, hence, a transverse wave number $\alpha = \sqrt{\beta}$) such that $\cL_{\eps,\beta}$  has an eigenvalue with positive real part. 
 \end{defi}

 The expansions in powers of $\eps$ for the conformal depth $h_{\eps}$, Riemann stretch $\zeta_{}$, and the coefficients in \eqref{eq:w1}--\eqref{eq:w2} are given in the next proposition.  
\begin{prop}\label{prop:expandpq}
The conformal depth $h_{\eps}$ is real analytic in $\eps$ with expansion
\begin{align}   \label{heps expansion}
h_{\eps} = h + h_2\eps^2 + O\left(\eps^4\right),
\end{align}
where 
\begin{align}
h_2 = \frac{c_0^4-3}{4c_0^2} \quad \textrm{and} \quad c_0 = \tanh^\mez(h).
\end{align}
Moreover, for any $s>\frac52$, the following functions are analytic in $\eps$ with values in $H^s(\T)$ provided $\eps$ is sufficiently small. They have the expansions:
\begin{align}\label{zeta1}
&\zeta_{}(x)=x+\eps\zeta_{1,1} \sin x+\eps^2 \zeta_{2,2} \sin(2x)+ \eps^3\Big[\zeta_{3,1}\sin x + \zeta_{3,3}\sin(3x) \Big]+ O(\eps^4),\\
 \label{expand:p}
&p_{}(x)=c_0 +\eps p_{1,1}\cos x+\eps^2\Big[p_{2,0}+p_{2,2}\cos(2x)\Big]+\eps^3\Big[p_{3,1}\cos x+p_{3,3}\cos(3x)\Big]+O(\eps^4),\\ \label{expand:q}
&q_{}(x)=\eps q_{1,1}\cos x+\eps^2\Big[q_{2,0}+q_{2,2}\cos(2x)\Big]+\eps^3\Big[q_{3,1}\cos x+q_{3,3}\cos(3x)\Big]+O(\eps^4),\\ \label{expand:qzeta}
&\frac{1+q_{}(x)}{\zeta_{}'(x)}=1+\eps r_{1,1}\cos x+\eps^2\Big[r_{2,0}+r_{2,2}\cos(2x)\Big]+\eps^3\Big[r_{3,1}\cos x+r_{3,3}\cos(3x)\Big]+O(\eps^4),
\end{align}
where 
\allowdisplaybreaks
\begin{align}
\zeta_{1,1} &= c_0^{-2}, \\ \zeta_{2,2} &= \frac{c_0^8 + 4c_0^4 + 3}{8c_0^8}, \\ \zeta_{3,1} &= \frac{4c_0^{14}+2c_0^{12}-17c_0^{10}-14c_0^8+10c_0^6+10c_0^4-15c_0^2-12}{16c_0^{10}(c_0^2+1)} \\ \zeta_{3,3} &= \frac{3c_0^{12}+43c_0^8+41c_0^4+9}{64c_0^{14}} \\
p_{1,1} &= -2c_0^{-1}, \\
p_{2,0} &= \frac{-2c_0^{12}+5c_0^8+12c_0^4+9}{16c_0^7}, \\
p_{2,2} &= -\frac{c_0^4 + 3}{2c_0^7}, \\
p_{3,1} &= \frac{-2c_0^{14}+14c_0^{10}+11c_0^8-10c_0^6-10c_0^4+24c_0^2+21}{8c_0^9(c_0^2+1)}, \\
p_{3,3} &= -\frac{c_0^{12}+17c_0^8+51c_0^4+27}{32c_0^{13}} \\
q_{1,1} &= -c_0^2, \\
q_{2,0} &= 1, \\
q_{2,2} &= 2-3c_0^{-4}, \\
q_{3,1} &= \frac{4c_0^{14}+6c_0^{12}-9c_0^{10}-12c_0^8-30c_0^6-30c_0^4+69c_0^2+66}{16c_0^6(c_0^2+1)}, \\
q_{3,3} &= -\frac{3(3c_0^{12}+19c_0^8-71c_0^4+81)}{64c_0^{10}}, \\
r_{1,1} &= -(c_0^2+c_0^{-2}), \\
r_{2,0} &= \frac32 + \frac12 c_0^{-4}, \\
r_{2,2} &= \frac{9c_0^8-14c_0^4-3}{4c_0^8}, \\
r_{3,1} &= \frac{4c_0^{18}+6c_0^{16}-11c_0^{14}-12c_0^{12}-45c_0^{10}-48c_0^8+93c_0^6+90c_0^4+27c_0^2+24}{16c_0^{10}(c_0^2+1)}, \\
r_{3,3} &= \frac{-c_0^{16}-98c_0^{12}+252c_0^8-318c_0^4-27}{64c_0^{14}}.
\end{align}
\end{prop}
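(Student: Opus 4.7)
The plan is to establish analyticity first by quoting prior results, then to reduce the explicit expansion problem to a sequence of algebraic manipulations on the Stokes-wave expansion from Proposition \ref{eta expansion}, and finally to harness symmetry and Mathematica to extract the coefficients.

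For the analyticity, I would argue as follows. Theorem \ref{theo:Stokes} gives analyticity of $(\eta^*, \psi^*, c^*)$ in $\eps$ with values in $H^s(\T)\times H^s(\T)\times \Rr$ for any $s>5/2$. Remark \ref{rema:XZzeta}, obtained by applying the Analytic Implicit Function Theorem to the fixed-point equation \eqref{zeta:fixedpoint}, shows that $\zeta-\I$ is analytic in $\eps$ with values in $H^s(\T)$. Formula \eqref{depth} then expresses $h_\eps$ as a continuous linear functional of $\eta^*\circ\zeta$, hence it is analytic in $\eps$. Since $H^s(\T)$ is a Banach algebra for $s>1/2$, is closed under division by functions bounded away from zero, and admits analytic composition $f\mapsto f\circ\zeta$ when $s>5/2$ (so that $\zeta-\I\in W^{2,\infty}$ with $\zeta'>1/2$ for small $\eps$), the quantities $\zeta_\sharp V^*$, $\zeta_\sharp B^*$, $p$, $q$, and $(1+q)/\zeta'$ are analytic in $\eps$ into $H^s(\T)$.

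To reduce the nonlinear quantities $B^*, V^*$ to something algebraic, I would exploit the fact that a Stokes wave is steady in the moving frame. The first equation of \eqref{ww} with $\p_t\eta^*=0$ gives $G(\eta^*)\psi^* = -c^*\p_x\eta^*$. Substituting into \eqref{def:B*V*} yields
\[
B^* = \frac{\p_x\eta^*(\p_x\psi^*-c^*)}{1+(\p_x\eta^*)^2}, \qquad V^* = \p_x\psi^* - B^*\p_x\eta^*,
\]
which are purely algebraic in $(\eta^*,\psi^*,c^*)$. Inserting the expansions from Proposition \ref{eta expansion} and expanding the denominator as a geometric series in $(\p_x\eta^*)^2 = O(\eps^2)$ up to order $\eps^3$ produces Fourier expansions of $B^*$ and $V^*$ with coefficients that are rational functions of $c_0=\tanh^{1/2}(h)$.

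To determine $\zeta$ (and hence $h_\eps$), I would insert the ansatz $\zeta(x) = x+\eps a_1(x)+\eps^2 a_2(x)+\eps^3 a_3(x)+O(\eps^4)$ into the fixed-point equation \eqref{zeta:fixedpoint}, Taylor-expand $\eta^*\circ\zeta$ in the argument perturbation $\zeta-x$, and match Fourier modes order by order. Parity ($\eta^*$ even, $\zeta-\I$ odd) forces $\zeta$ to contain only sine modes and forces $h_\eps$ to be an even function of $\eps$, giving the form \eqref{heps expansion} with $h_2$ computed from the zeroth Fourier mode of $\eta^*\circ\zeta$ at order $\eps^2$.

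Finally, composing $V^*$ and $B^*$ with $\zeta$, differentiating in $x$, and substituting into \eqref{def:pq} and $(1+q)/\zeta'$, I would expand through order $\eps^3$ and read off $\zeta_{j,k}, p_{j,k}, q_{j,k}, r_{j,k}$. The main obstacle is the sheer volume of the algebra: third-order expansions of products, compositions, and quotients of degree-three polynomials in $\eps$ with trigonometric coefficients. The computation is mechanical but extremely error-prone by hand, and I would carry it out in Mathematica, as in the accompanying companion notebook. Parity (evenness of $p, q, r$ in $x$ at each order; vanishing of $q$ at $\eps=0$ because $B^*|_{\eps=0}=0$) and the infinite-depth limit $c_0\to 1$ treated in \cite{CreNguStr} provide useful consistency checks against the claimed closed forms.
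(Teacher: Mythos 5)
Your proposal is mathematically sound and takes a genuinely different route from the paper. The paper's proof of Proposition~\ref{prop:expandpq} is a single sentence: it cites Appendix A.2 of \cite{Berti3} and records the dictionary between their notation and the present paper's $(h_\eps-h)$, $(\zeta-x)$, $(p-c_0)$, $[(1+q)/\zeta'-1]$. Your argument is self-contained modulo the symbolic computation, and its main structural novelty is the use of the steady-state identity $G(\eta^*)\psi^*=-c^*\p_x\eta^*$ (obtained by setting $\p_t\eta^*=0$ in the first equation of \eqref{ww}) to reduce $B^*$ and $V^*$ to the purely algebraic expressions
\[
B^*=\frac{\p_x\eta^*(\p_x\psi^*-c^*)}{1+(\p_x\eta^*)^2},
\qquad
V^*=\p_x\psi^*-B^*\p_x\eta^*.
\]
This sidesteps any need to expand the Dirichlet--Neumann operator $G(\eta^*)$ in powers of $\eps$ (the approach taken for instance in the companion infinite-depth appendix of \cite{CreNguStr}), which is a genuine simplification in finite depth where $G(0)=|D|\tanh(h|D|)$ is less convenient than $|D|$. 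The analyticity argument you sketch (Theorem~\ref{theo:Stokes}, Remark~\ref{rema:XZzeta}, Banach-algebra and composition properties of $H^s(\T)$ for $s>5/2$) is consistent with what the paper uses. One small imprecision: the evenness of $h_\eps$ in $\eps$ does not follow from the $x$-parity of $\eta^*$ and $\zeta$ alone, but from the discrete symmetry of the Stokes family under $(\eps,x)\mapsto(-\eps,x+\pi)$; since $h_\eps-h$ is the $x$-average of $\eta^*\circ\zeta$ and that average is invariant under the shift $x\mapsto x+\pi$, one gets $h_{-\eps}=h_\eps$. With that correction the route is correct.
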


\begin{proof}
The proof of Proposition  
\ref{prop:expandpq} is given in Appendix A.2 of \cite{Berti3}, noting that their $\verb|f|_{\eps}$, $\mathfrak{p}_{\eps}$, $p_{\eps}$, and $a_{\eps}$ correspond to $(h_{\eps}-h)$, $(\zeta_{}-x)$, $(p_{}-c_0)$, and $[(1+q_{})/\zeta_{}' - 1]$, respectively, in this work.
\end{proof}
Combining Theorem \ref{theo:flattenG} (with $s=1$) and Proposition \ref{prop:expandpq}, 
we conclude that the mapping
\bq\label{analyticity:cL}
(-\eps_0(1), \eps_0(1))\times (0, \infty) \ni (\eps, \beta)\mapsto \cL_{\eps,\beta} \in \cL\big((H^1(\T))^2, (L^2(T))^2\big)
\eq
is analytic for each $h > 0$. 
\section{Resonance condition and Kato's perturbed basis} \label{Sec:Kato} 
\subsection{Resonance condition}
We begin with the basic case $\eps=0$.  Using the expansions \eqref{expand:p} and \eqref{expand:q} for $p$ and $q$ and the formula \eqref{form:G0} for $\cG_{0,\beta}$,  we find in view of \eqref{eiproblem} that
\bq
\cL_{0,\beta}=\begin{bmatrix} 
c_0\p_x  & (|D|^2+\beta)^\mez\tanh\left(h\left(|D|^2+\beta\right)^\mez\right)\\
-1& c_0\p_x
 \end{bmatrix}.
\eq
 The {\it spectrum of} $\cL_{0,\beta}$ consists of the purely imaginary eigenvalues 
\bq\label{ld0pm}
\ld^0_\pm(k, \beta)=i\left[c_0k\pm (k^2+\beta)^\frac14\tanh^\mez\left(h\left(k^2+\beta \right)^\mez \right)\right],\quad k\in \Zz,
\eq
which are the roots of the quadratic characteristic polynomial 
\bq
\Delta_0(\ld; k, \beta)=(\ld-ic_0k)^2+(k^2+\beta)^\mez\tanh\left(h\left(|D|^2+\beta\right)^\mez\right)=[\ld-\ld^0_+(k, \beta)][\ld-\ld^0_-(k, \beta)].
\eq
We learn from \cite{MMMSY} that the resonance condition is a double eigenvalue 
$\ld^0_+(-(m+1), \beta)=\ld^0_-(m, \beta)$.  That is, 
\bq
-c_0(m+1)+((m+1)^2+\beta)^\frac14\tanh^\mez\left(h\left((m+1)^2+\beta \right)^\mez \right)=c_0m-(m^2+\beta)^\frac14\tanh^\mez\left(h\left( m^2+\beta\right)^\mez \right). \label{resonancecond}
\eq
For our purposes, we choose the {\it simplest resonance} $m=1$,  which  leads to instabilities with the largest growth rates for small Stokes waves as suggested by  numerical computations \cite{MMMSY}.  Then, we have the following proposition on the existence, uniqueness and asymptotics of the  resonant value of $\beta$.
\begin{prop} \label{prop:rescond} For each $h > 0$, there exists a unique $\beta_* = \beta_*(h)\in (0, \infty)$ such that \eqref{resonancecond} is satisfied and \\\\ (i)  $0 < \beta_*(h) < 3$, \\
 (ii) $\beta_*(h)$ is real analytic for $h > 0$, \\
(iii) $\beta_*(h) = \frac43 h^2 + \mathscr{o}(h^2)$ as $h \rightarrow 0^+$, and \\
(iv) $\beta_*(h) = \beta_{*,\infty} -\frac{12e^{-2h}}{\left(1+\beta_{*,\infty} \right)^{-\frac34} + \left(4+\beta_{*,\infty} \right)^{-\frac34}} + \mathscr{o}\left(e^{-2h}\right) $ as $h \rightarrow \infty$, where $\beta_{*,\infty} = 2.7275...$ is the root of \eqref{betaStarInf}.
\end{prop}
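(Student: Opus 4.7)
The plan is to recast \eqref{resonancecond} as $F(\beta,h)=0$, where
\begin{equation*}
F(\beta,h) := (4+\beta)^{1/4}\tanh^{1/2}\!\bigl(h\sqrt{4+\beta}\bigr) + (1+\beta)^{1/4}\tanh^{1/2}\!\bigl(h\sqrt{1+\beta}\bigr) - 3\tanh^{1/2}(h),
\end{equation*}
and then analyze this scalar equation. The function $F$ is real analytic on $(0,\infty)\times(0,\infty)$. A direct computation shows that each of the two summands on the left is strictly increasing in $\beta$ (both the prefactor and the $\tanh^{1/2}$ argument increase with $\beta$, and these are positive real-analytic functions), so $\partial_\beta F(\beta,h)>0$ everywhere. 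This monotonicity gives uniqueness of the root $\beta_*(h)$ as soon as existence is established.

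For existence and the bound $\beta_*(h)\in(0,3)$, the first step is to observe that $F(0,h)<0$: using $\tanh(2h) = 2\tanh(h)/(1+\tanh^2 h)$ one finds
\begin{equation*}
F(0,h) = \sqrt{2}\,\tanh^{1/2}(2h) - 2\tanh^{1/2}(h) = \sqrt{2}\,\tanh^{1/2}(h)\Bigl[(1+\tanh^2 h)^{-1/2} - 1\Bigr] < 0.
\end{equation*}
Next, since $\tanh^{1/2}(h\sqrt{k}) \ge \tanh^{1/2}(h)$ for $k\ge 1$, one gets $F(3,h) \ge (7^{1/4}+\sqrt{2}-3)\tanh^{1/2}(h)>0$ because the numerical constant $7^{1/4}+\sqrt{2}-3\approx 0.04$ is strictly positive. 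Combined with monotonicity, the intermediate value theorem yields a unique $\beta_*(h)\in(0,3)$, proving (i), while the analytic implicit function theorem applied with $\partial_\beta F(\beta_*,h)\ne 0$ gives (ii).

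For the asymptotic (iii) as $h\to 0^+$, I would Taylor expand $\tanh^{1/2}(x) = x^{1/2}\bigl(1 - x^2/6 + O(x^4)\bigr)$ in each term. The leading $h^{1/2}$ order in the equation $F=0$ reduces to $(4+\beta)^{1/2}+(1+\beta)^{1/2}=3$, whose only root is $\beta=0$; hence $\beta_*(h)\to 0$. Writing $\beta_*(h)=\beta_0 h^2+o(h^2)$ and matching at order $h^{5/2}$ against the $h^2$ corrections from the $\tanh$ expansion gives $\tfrac{3}{4}\beta_0 = 9/6 - 1/2 = 1$, that is, $\beta_0 = 4/3$. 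For (iv) as $h\to\infty$, use $\tanh^{1/2}(t) = 1 - e^{-2t}+O(e^{-4t})$. The leading order of $F=0$ becomes $(4+\beta)^{1/4}+(1+\beta)^{1/4}=3$, which defines $\beta_{*,\infty}$ (the root of \eqref{betaStarInf}). Writing $\beta_*(h)=\beta_{*,\infty}+\delta(h)$ with $\delta(h)\to 0$, linearizing the algebraic part in $\delta$, and noting that the exponentials $e^{-2h\sqrt{1+\beta_{*,\infty}}}$ and $e^{-2h\sqrt{4+\beta_{*,\infty}}}$ are $o(e^{-2h})$ since $\beta_{*,\infty}>0$, I obtain
\begin{equation*}
\tfrac{1}{4}\bigl[(1+\beta_{*,\infty})^{-3/4}+(4+\beta_{*,\infty})^{-3/4}\bigr]\delta(h) = -3e^{-2h} + o(e^{-2h}),
\end{equation*}
which gives precisely the stated formula.

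The main technical points are (a) verifying that $F(3,h)>0$ uniformly in $h$ — this is the reason the ``$<3$'' bound is sharp enough to be useful and reduces to the numerical check $7^{1/4}+\sqrt{2}>3$ combined with monotonicity of $\tanh$; and (b) the bookkeeping in the two asymptotic regimes. The small-$h$ expansion requires care because both sides of \eqref{resonancecond} degenerate to $0$ like $h^{1/2}$, so one must carry the $O(h^{5/2})$ correction terms to extract the coefficient $4/3$; the large-$h$ expansion requires only that one identify $e^{-2h}$ as the dominant exponential, which it is because $\sqrt{1+\beta_{*,\infty}}>1$ and $\sqrt{4+\beta_{*,\infty}}>1$.
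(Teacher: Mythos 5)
Your proposal follows the same overall strategy as the paper (with $F$ defined with the opposite sign, which is immaterial): boundary evaluations at $\beta=0$ and $\beta=3$, strict monotonicity of $F$ in $\beta$, the Analytic Implicit Function Theorem for (ii), and the Puiseux expansion of $\tanh^{1/2}$ at the two ends for (iii) and (iv). Parts (i)--(iii) of your argument match the paper's in substance, and your arithmetic for $\beta_0=4/3$ checks out.

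There is, however, a genuine gap in your treatment of (iv). You write ``$\beta_*(h)=\beta_{*,\infty}+\delta(h)$ with $\delta(h)\to 0$'' without establishing that the limit of $\beta_*(h)$ exists. A priori the bounded curve $h\mapsto\beta_*(h)\in(0,3)$ could fail to converge. The paper spends a substantial portion of the proof precisely on this point: it shows $(\partial_hF)(\beta,h)>0$ for $0<\beta<3$ and $h$ large (via a chain of $\mathrm{sech}/\tanh$ inequalities), so that $\beta_*'(h)=-\tfrac{\partial_hF}{\partial_\beta F}>0$ for $h>H_0$, hence $\beta_*$ is bounded and eventually increasing, hence convergent. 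Your linearization in $\delta$ and your conclusion that the exponentials $e^{-2h\sqrt{1+\beta_*(h)}}$, $e^{-2h\sqrt{4+\beta_*(h)}}$ are $\mathscr{o}(e^{-2h})$ both presuppose $\delta(h)\to 0$, so the gap cannot be skipped. That said, there is a shorter fix than the paper's: $\tanh^{1/2}(h\sqrt{k+\beta})\to 1$ uniformly for $\beta\in[0,3]$ as $h\to\infty$ (each argument is bounded below by $h$), so $F(\beta,h)$ converges uniformly on $[0,3]$ to $(4+\beta)^{1/4}+(1+\beta)^{1/4}-3$, which is strictly increasing and has $\beta_{*,\infty}\in(0,3)$ as its unique zero; hence every subsequential limit of the bounded sequence $\beta_*(h)$ equals $\beta_{*,\infty}$, giving convergence. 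You should include this observation (or an equivalent one) to make part (iv) complete.
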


\begin{proof}
   Define the function 
    \begin{align}
F(\beta,h) = 3\tanh^\mez(h) - (1+\beta)^{\frac14}\tanh^\mez\left(h\left(1+\beta\right)^\mez
\right)  - (4+\beta)^{\frac14}\tanh^\mez\left(h\left(4+\beta\right)^\mez
\right)  \label{Fbetah} 
\end{align}
for $\beta, \ h>0$. For fixed $h>0$, the zeros of $F(\cdot, h)$ correspond to the solution of the resonance condition \eqref{resonancecond} with $m = 1$. 
We note that $F$ is real analytic for $\beta>0,\ h>0$.
We have 
\begin{align*}
F(0,h) =  2\tanh^\mez(h)  - 2^\mez \tanh^\mez\left(2h\right) > 0,
\end{align*}
since $\tanh(2h)/\tanh(h) < 2$ valid for $h > 0$.
On the other hand, for any $h > 0$,
\begin{align*}
F(3,h) &= 3\tanh^\frac12(h) - 4^{\frac14}\tanh^{\frac12}(2h) - 7^{\frac14} \tanh^\frac12\left(7^{\frac{1}{2}} h\right),  \\
&< 3\tanh^\frac12(h) - 4^\frac14 \tanh^\frac12(h) - 7^\frac14 \tanh^\frac12(h), \\
&< \left(3-4^\frac14 - 7^\frac14\right)\tanh^\frac12(h), \\
&< 0,
\end{align*}
where the first inequality follows from the monotonicity of $\tanh^\frac12(\xi)$ for $\xi > 0$. Consequently, for each $h > 0$, there exists $0 < \beta_* < 3$ such that $F(\beta_*,h) = 0$. 
Because $\partial_\beta F<0$,  we deduce that $\beta_*(h)$ is the unique positive zero of $F(\cdot, h)$ . 
This proves (i).  By the Analytic Implicit Function Theorem, $\beta_*(h)$ is real analytic for $h > 0$, proving (ii).

We now prove (iii).  By (i), $h (1+\beta_*(h))^\frac12  =O(h)$ and $h (4+\beta_*(h))^\frac12 =O(h)$  as $h \rightarrow 0^+$ or $h\to \infty$. Consequently, using the expansion \begin{align*}
\tanh^\frac12(\xi) = \xi^\frac12\left(1 - \frac{\xi^2}{6} + O\left(\xi^4 \right) \right) \quad \textrm{as} \quad \xi \rightarrow 0^+, \end{align*}
we obtain
\begin{align}
F(\beta_*(h),h) &= \left[3 - \left(1+\beta_*(h) \right)^\frac12 - \left(4 + \beta_*(h) \right)^\frac12 \right]h^\frac12 - \frac16 \left[3 - \left(1 + \beta_*(h) \right)^\frac32 - \left(4+\beta_*(h) \right)^\frac32 \right]h^{\frac52} \nonumber \\ &\quad + O\left(h^\frac92 \right) \quad \textrm{as} \quad h \rightarrow 0^+.\label{Fbetah0}
\end{align}
Given $F(\beta_*(h),h) =0$, it follows from \eqref{Fbetah0} that
\begin{align*}
3 - \left(1+\beta_*(h)\right)^\frac12 - \left(4+\beta_*(h) \right)^\frac12 = O(h^2) \quad \textrm{as} \quad h \rightarrow 0^+.
\end{align*}
Rearranging this result, we have
\begin{align*}
3 - (1+\beta_*(h))^\frac12 + O(h^2) &= (4+\beta_*(h))^\frac12,
\end{align*}
and upon squaring both sides, while using the bound $0 < \beta_*(h) < 3$, we find
\begin{align*}
\left(1+\beta_*(h)\right)^\frac12 = 1 + O(h^2) \quad \textrm{as} \quad h \rightarrow 0^+.
\end{align*}
Squaring both sides a final time, we conclude
\begin{align*}
\beta_*(h) = O(h^2) \quad \textrm{as} \quad h \rightarrow 0^+.
\end{align*}
In light of this result, we now posit the expansion $\beta_*(h) = \beta_2 h^2 + \mathscr{o}(h^2)$ as $h \rightarrow 0^+$ for some $\beta_2$ to be determined.  Substituting this expansion into \eqref{Fbetah0} and setting $F(\beta_*(h),h) = 0$, we find at leading order that $\beta_2 = 4/3$, proving (iii).  

In order to prove (iv), we claim that there exists $H_0 > 0$ such that $\beta_*'(h) > 0$ for all $h > H_0$. 
Indeed, we calculate 
\begin{align*}
\partial_h F(\beta,h) = \left(\frac{3\textrm{sech}^2(h)}{2\tanh^\frac12(h)}\right) g(\beta,h),
\end{align*}
where
\begin{align*}
g(\beta,h) &= 1 - \frac{(1+\beta)^\frac34}{3}\left( \frac{\textrm{sech}\left(h\left(1+\beta\right)^\frac12 \right)}{\textrm{sech}(h)} \right)^2 \left( \frac{\tanh(h)}{\tanh\left(h\left( 1+\beta\right)^\frac12 \right)} \right)^\frac12 \\ &\quad - \frac{(4+\beta)^\frac34}{3}\left( \frac{\textrm{sech}\left(h\left(4+\beta\right)^\frac12 \right)}{\textrm{sech}(h)} \right)^2 \left( \frac{\tanh(h)}{\tanh\left(h\left( 4+\beta\right)^\frac12 \right)} \right)^\frac12.
\end{align*}
From the monotonicity of $\tanh$, we have $\tanh(h)/\tanh(wh) < 1$ for $h> 0, \ w > 1$. Thus 
\begin{align*}
g(\beta,h) > 1 - \frac{(1+\beta)^\frac34}{3}\left( \frac{\textrm{sech}\left(h\left(1+\beta\right)^\frac12 \right)}{\textrm{sech}(h)} \right)^2 - \frac{(4+\beta)^\frac34}{3}\left( \frac{\textrm{sech}\left(h\left(4+\beta\right)^\frac12 \right)}{\textrm{sech}(h)} \right)^2.
\end{align*}
For $0 < \beta < 3$, we have the bounds
\begin{align*}
\frac{(1+\beta)^\frac34}{3}\left( \frac{\textrm{sech}\left(h\left(1+\beta\right)^\frac12 \right)}{\textrm{sech}(h)} \right)^2 < \frac{4^\frac34}{3} \quad \textrm{and} \quad \frac{(4+\beta)^\frac34}{3}\left( \frac{\textrm{sech}\left(h\left(4+\beta\right)^\frac12 \right)}{\textrm{sech}(h)} \right)^2 < \frac{7^\frac34}{3}\left(\frac{\textrm{sech}(2h)}{\textrm{sech}(h)} \right)^2,
\end{align*}
which follow from the monotonicty of $\textrm{sech}(\xi)$ for $\xi > 0$. It follows that
\begin{align*}
g(\beta,h) > 1 - \frac{4^\frac34}{3} - \frac{7^\frac34}{3}\left(\frac{\textrm{sech}(2h)}{\textrm{sech}(h)} \right)^2.
\end{align*}
Now $1 - 4^{\frac34}/3 > 0$ and  
$\textrm{sech}(2h)/\textrm{sech}(h) \rightarrow 0$ as $h \rightarrow \infty$, 
so that there exists $H_0 > 0$ such that 
\begin{align*}
1 - \frac{4^\frac34}{3} - \frac{7^\frac34}{3}\left(\frac{\textrm{sech}(2h)}{\textrm{sech}(h)} \right)^2 > 0 \quad \forall  h > H_0.
\end{align*}
Consequently, $\partial_h F(\beta,h)> 0$ for all $0 < \beta < 3$ and $h > H_0$. In particular, $(\partial_hF)(\beta_*(h),h) > 0$ for $h > H_0$.   
However, $\partial_\beta F(\beta,h) < 0$ for all $\beta > 0$ and $h > 0$, so that 
$\beta_*'(h) =  - \frac{(\partial_hF)(\beta_*(h),h)}{(\partial_\beta F)(\beta_*(h),h) }> 0$ for $h > H_0$, proving our claim. 

Because $\beta_*(h)$ is bounded above and increasing for $h > H_0$, we know that  $\beta_{*,\infty} := \lim_{h \rightarrow \infty} \beta_*(h)$ exists and is finite. Taking the limit of $F(\beta_*(h),h) = 0$ as $h \rightarrow \infty$, we arrive at the following equation for $\beta_{*,\infty}$:
\begin{align}
3 - (1+\beta_{*,\infty})^\frac14 - (4+\beta_{*,\infty})^\frac14 = 0. \label{betaStarInf}
\end{align}
The solution of \eqref{betaStarInf} is 
\begin{align*}
\beta_{*,\infty} = \frac{1}{48}\left[-687 + \left(70498161-7872768\sqrt{7}\right)^\frac13 + 3\left(2611043+291584\sqrt{7} \right)^\frac13 \right] = 2.7275...,
\end{align*}
consistent with the infinite depth results presented in \cite{CreNguStr}. 
From the expansion 
\begin{align*}
\tanh^\frac12(\xi) = 1 - e^{-2\xi} + O\left(e^{-4\xi} \right) \quad \textrm{as} \quad \xi \rightarrow \infty,
\end{align*}
 the expression \eqref{Fbetah} has the asymptotic behavior 
\begin{align}
F(\beta_*(h),h)  &= 3 - \left(1+\beta_*(h)\right)^\frac14 - \left(4+\beta_*(h)\right)^\frac14  - 3e^{-2h}  + \mathscr{o}\left(e^{-2h} \right) \quad \textrm{as} \quad h \rightarrow \infty. \label{Fbetahinf}
\end{align}
Define $\underline{\beta}(h) := \beta_*(h) - \beta_{*,\infty}$. Then  \eqref{Fbetahinf} becomes
\begin{align*}
F(\beta_*(h),h)  &= 3 - \left(1+\beta_{*,\infty} \right)^\frac14 \left(1+ \frac{\underline{\beta}(h)}{1+\beta_{*,\infty}}\right)^\frac14 - \left(4+\beta_{*,\infty} \right)^\frac14\left(1 + \frac{\underline{\beta}(h)}{4+\beta_{*,\infty}}\right)^\frac14  - 3e^{-2h} + \mathscr{o}\left( e^{-2h}\right),
\end{align*}
 as $h \rightarrow \infty$. Since  $\underline{\beta}(h) \rightarrow 0^-$ as $h \rightarrow \infty$,  combining  the expansion
$\left(1+\xi \right)^\frac14 = 1 + \frac14 \xi + O\left(\xi^2\right)$ with \eqref{betaStarInf} yields
\begin{align*}
F(\beta_*(h),h)) = -\frac{\underline{\beta}(h)}{4}\left(\left( 1+\beta_{*,\infty}\right)^{-\frac34} + \left( 4+\beta_{*,\infty}\right)^{-\frac34}  \right) - 3e^{-2h} + O\left(\underline{\beta}(h)^2 \right) + \mathscr{o}\left(e^{-2h} \right)  \quad \textrm{as} \quad h \rightarrow \infty.
\end{align*}
Because $F(\beta_*(h),h) = 0$, we obtain the following implicit formula for $\underline{\beta}(h)$:
\begin{align*}
\underline{\beta}(h) = \frac{-12e^{-2h}}{\left( 1+\beta_{*,\infty}\right)^{-\frac34} + \left( 4+\beta_{*,\infty}\right)^{-\frac34} }  + O\left(\underline{\beta}(h)^2 \right) + \mathscr{o}\left(e^{-2h} \right) \quad \textrm{as} \quad h \rightarrow \infty.  
\end{align*}
Hence $\underline{\beta}(h) = O\left(e^{-2h}\right)$ as $h \rightarrow \infty$. Therefore, we obtain 
\begin{align*}
\underline{\beta}(h) = \frac{-12e^{-2h}}{\left( 1+\beta_{*,\infty}\right)^{-\frac34} + \left( 4+\beta_{*,\infty}\right)^{-\frac34} } + \mathscr{o}\left(e^{-2h} \right) \quad \textrm{as} \quad h \rightarrow \infty, \label{betaUnderline}
\end{align*}
from which (iv) follows. 
\end{proof} 
\begin{figure}[tb]
\includegraphics[width=8cm]{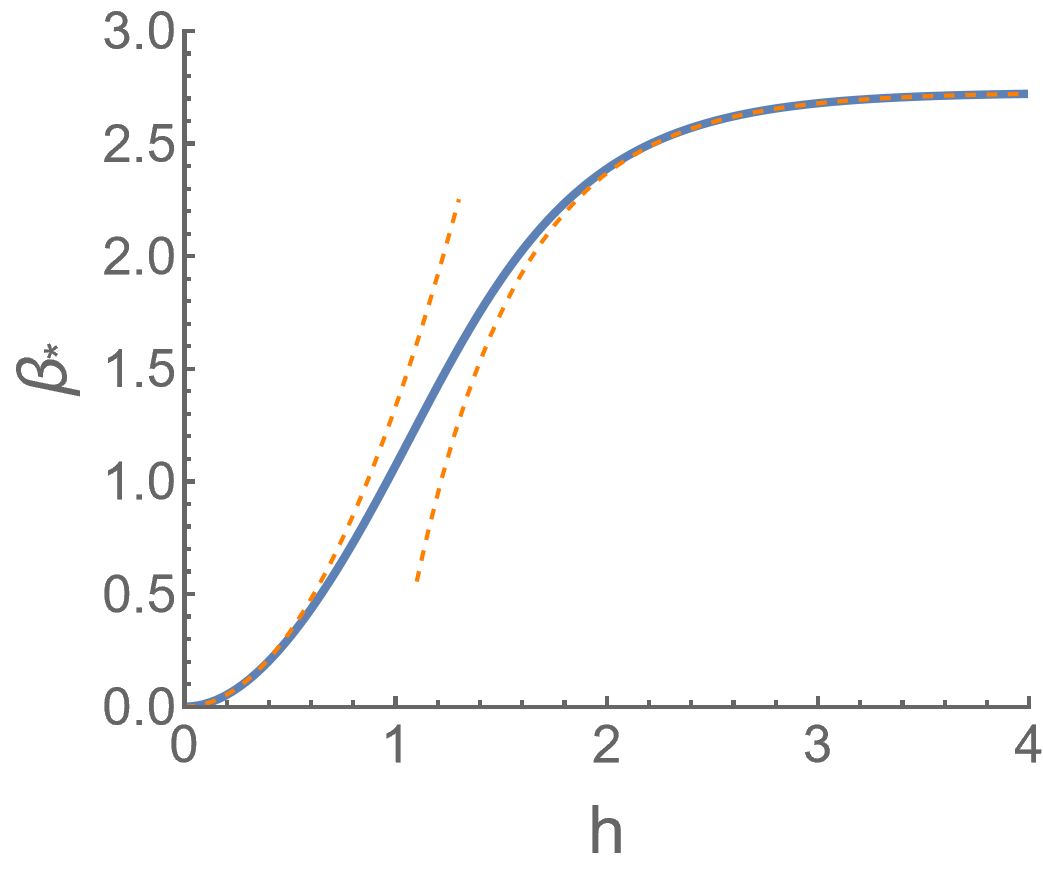}
\label{fig2}
\caption{A plot of $\beta_*$ as a function of $h$ (solid blue) along with its asymptotic expansions as $h \rightarrow 0^+$ and $h \rightarrow \infty$ (dashed orange) according to Proposition ~\ref{prop:rescond}.}
\end{figure}
\begin{lemm} \label{prop:lambda0Monotone}
 For each $\beta > 0$ and $h > 0$, we have 
\bq\label{dOmpm}
\frac{\partial}{\partial k} \textrm{Im}\left\{ \ld^0_\pm(k, \beta )\right\}>
\begin{cases} 
c_0\left(1-(1+\beta)^{-\mez}\right)\quad\text{if}~|k|\ge 1,\\
0\quad\text{if}~k\ne 0.
\end{cases}
\eq
\end{lemm}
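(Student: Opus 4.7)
The plan is to differentiate the explicit dispersion relation and reduce the lemma to an upper bound on $|\partial_k\omega|$, where $\omega(k,\beta) = (k^2+\beta)^{1/4}\tanh^{1/2}(h(k^2+\beta)^{1/2})$. Setting $K = (k^2+\beta)^{1/2}$ and differentiating $\omega^2 = K\tanh(hK)$ implicitly, one obtains
\begin{equation*}
\partial_k\omega(k,\beta) \;=\; \frac{k\bigl[\tanh(hK) + hK\,\textrm{sech}^2(hK)\bigr]}{2K\,\omega(k,\beta)} \;=\; \frac{k}{K}\,\omega'(K),
\end{equation*}
which is odd in $k$ since $\omega(\cdot,\beta)$ is even. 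Combined with the symmetry $\textrm{Im}\,\lambda^0_\pm(k,\beta) = -\textrm{Im}\,\lambda^0_\mp(-k,\beta)$, the four cases (sign of $\lambda^0$ and sign of $k$) reduce to two upper bounds on $\partial_k\omega$ for $k > 0$ and $\beta > 0$: (A) $\partial_k\omega(k,\beta) < c_0$, which yields $\partial_k\textrm{Im}\,\lambda^0_- > 0$ for every $k \neq 0$, and (B) $\partial_k\omega(k,\beta) < c_0(1+\beta)^{-1/2}$ for $k \geq 1$, which yields the quantitative lower bound.

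To prove these I would exploit three standard facts about the two-dimensional gravity-wave dispersion $\omega(K) = \sqrt{K\tanh(hK)}$: the group velocity $\omega'(K)$ and the phase velocity $\omega(K)/K$ are each strictly decreasing on $(0,\infty)$; $\omega'(K) < \omega(K)/K$ (equivalent to $\sinh(2hK) > 2hK$); and $\omega(1) = c_0$, so $\omega'(K) < c_0$ for all $K \geq 1$. For $k \geq 1$, $K \geq \sqrt{1+\beta} > 1$ and $k/K \leq 1$, giving $\partial_k\omega \leq \omega'(K) < c_0$, which establishes (A) in this range; for $0 < k < 1$ with $K \geq 1$ the same argument applies, and for $K < 1$ (which forces $\beta < 1$) the additional smallness of $k/K$ combined with a direct estimate using $\omega'(K) < \sqrt{h}$ closes (A). For (B), squaring and clearing denominators via $\omega^2 = K\tanh(hK)$ converts the target into the equivalent polynomial--hyperbolic inequality
\begin{equation*}
(1+\beta)\,k^2\,\bigl[\tanh(hK) + hK\,\textrm{sech}^2(hK)\bigr]^2 \;<\; 4K^3\tanh(hK)\tanh(h), \qquad k \geq 1,\ \beta > 0,
\end{equation*}
which at $k = 1$ (where $K = \sqrt{1+\beta}$) collapses to $\omega'(\sqrt{1+\beta}) < c_0$, and which becomes trivial as $k \to \infty$ since $\omega'(K) \to 0$.

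The main obstacle is the uniform verification of this inequality across the full range $k \geq 1$, $\beta > 0$, $h > 0$. Since $\partial_k\omega = (k/K)\omega'(K)$ is a product of the decreasing factor $\omega'(K)$ and the increasing factor $k/K$, it is not globally monotonic in $k$, and the target bound depends nontrivially on $\beta$. I would therefore analyze the sign of the derivative in $k$ of the defect $\textrm{RHS} - \textrm{LHS}$ of the displayed inequality, using that $K^3\tanh(hK)$ grows in $k$ faster than the bracketed term. The analysis is elementary but requires careful bookkeeping of hyperbolic identities, analogous in spirit to the infinite-depth argument in \cite{CreNguStr}.
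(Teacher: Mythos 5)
Your decomposition $\partial_k\omega = (k/K)\,\omega'(K)$, the group-versus-phase-speed inequality $\omega'(K) < \omega(K)/K$ (equivalently $\sinh(2hK) > 2hK$), and the monotone decrease of $\omega(K)/K = \sqrt{\tanh(hK)/K}$ are exactly the paper's moves: the paper's chain $\mathrm{sech}^{3/2}(s)\sqrt{s/\sinh s} < \mathrm{sech}(s) < \sqrt{\tanh(s)/s}$, applied to the two pieces of $\omega'$, reduces to the same two facts. So this is not a genuinely different route; it is the paper's, rephrased in group/phase-velocity language. For $K \ge 1$ (which holds whenever $|k| \ge 1$, since $K = \sqrt{k^2+\beta} \ge \sqrt{1+\beta} > 1$) both arguments give $\omega'(K) < \omega(K)/K \le \omega(1) = c_0$, and with $|k|/K \le 1$ this yields $|\partial_k\omega| < c_0$ correctly.

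The $K < 1$ case is a real gap, and the closure you sketch does not work. The bound $\omega'(K) < \sqrt{h}$ is of no use since $\sqrt{h} > c_0 = \sqrt{\tanh h}$, and the factor $k/K < 1$ is not small enough to compensate when $\beta$ is small. In fact claim (A) fails there: for $h = 1$, $\beta = 10^{-3}$, $k = 0.1$ (so $K \approx 0.105 < 1$), one computes $\omega'(K) \approx 0.995$, $k/K \approx 0.953$, hence $\partial_k\omega \approx 0.948 > 0.873 \approx c_0$, i.e.\ $\partial_k\mathrm{Im}\,\lambda^0_-(k,\beta) < 0$. The paper's own proof silently shares this defect: its passage from $\sqrt{\tanh(hK)/(hK)}$ to $\sqrt{\tanh(h)/h}$ via ``$\tanh(s)/s$ decreasing'' is stated ``for all $k \ne 0$'' but requires $hK \ge h$, i.e.\ $K \ge 1$.

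There is a second, independent problem with (B), also in the paper. One needs $|\partial_k\omega| < c_0(1+\beta)^{-1/2}$ for $|k| \ge 1$, and the paper claims $|\partial_k\xi| = |k|(k^2+\beta)^{-1/2} \le (1+\beta)^{-1/2}$ there; but this function is \emph{increasing} in $|k|$ (its $k$-derivative is $\beta(k^2+\beta)^{-3/2} > 0$), so the inequality holds only for $|k| \le 1$, not $|k| \ge 1$. And (B) itself fails: $h = 0.1$, $\beta = 1$, $k = 2$ gives $\partial_k\omega \approx 0.276 > 0.223 \approx c_0(1+\beta)^{-1/2}$, hence $\partial_k\mathrm{Im}\,\lambda^0_- \approx 0.040 < 0.092 \approx c_0\bigl(1 - (1+\beta)^{-1/2}\bigr)$. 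So the lemma's blanket ``for each $\beta > 0$'' is too strong in both cases, and neither your proposal nor the paper's proof can establish it at that generality. The paper applies the lemma only with $\beta = \beta_*(h)$ and integer $k$, where the conclusions it actually uses do hold; the honest fix is to restrict the statement, not to try to sharpen the estimates.
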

\begin{proof}
For ease of notation, we define the auxiliary functions
\begin{align*}
 \omega(\xi,h) = \left(\xi\tanh(h\xi) \right)^\mez\quad\text{and}\quad \xi(k,\beta) = (k^2+\beta)^\mez, 
\end{align*}
so that  $ \textrm{Im}\left\{ \ld^0_\pm(k, \beta )\right\} = c_0 k \pm \omega(\xi(k,\beta),h)$. A direct calculation shows
\begin{align*}
\frac{\partial}{\partial k}\omega(\xi(k,\beta),h) = \frac{\sqrt{h}}{2}\left[\sqrt{\frac{\tanh(h\xi)}{h\xi}} + \text{sech}^{\frac32}(h\xi)\sqrt{\frac{h\xi}{\sinh(h\xi)}} \right] \frac{\partial}{\partial k} \xi(k,\beta)
\end{align*}
and $\left|\frac{\partial}{\partial k} \xi(k,\beta) \right| = \frac{|k|}{(k^2+\beta)^{\frac12}}>0$ for $k\ne 0$. 
Using the inequalities $s/\sinh(s) < 1$ and $\text{sech}^{\frac32}(s) < \text{sech}(s)<\sqrt{\tanh(s)/s}$ for $s > 0$, we have
\begin{align*}
\left|\frac{\partial}{\partial k}\omega(\xi(k,\beta),h)\right| < \sqrt{h}\sqrt{\frac{\tanh(h\xi)}{h\xi}} \left|\frac{\partial}{\partial k} \xi(k,\beta) \right|\quad\forall k \ne 0.
\end{align*}
Since $\tanh(s)/s$ is decreasing for all $s > 0$,  we deduce 
\[
\left|\frac{\partial}{\partial k}\omega(\xi(k,\beta),h)\right|<\sqrt{h}\sqrt{\frac{\tanh(h)}{h}} \left|\frac{\partial}{\partial k} \xi(k,\beta) \right|=c_0 \left|\frac{\partial}{\partial k} \xi(k,\beta) \right|\quad\forall k\ne 0.
\]
Then, \eqref{dOmpm} follows from this and the inequalities
 \[
0< \left|\frac{\partial}{\partial k} \xi(k,\beta) \right| \le 
 \begin{cases}
(1+\beta)^{-\mez}\quad\text{if}~|k|\ge 1,\\
  1\quad\text{if}~k\ne  0.
 \end{cases}
 \]
\end{proof}
Given $\beta_*$, we define $\sigma$ by 
\bq\label{def:sigma}
i\sigma:=\ld^0_+(-2, \beta_*,h)=\ld^0_-(1, \beta_*,h).
\eq
The second inequality in \eqref{dOmpm} implies that the functions $k\mapsto  \textrm{Im}\left\{ \ld^0_\pm(k, \beta )\right\}$ are strictly increasing on $\Rr$, and hence 
\bq\label{Delta0}
\Delta_0(i\sigma; k, \beta _*)=0\iff k\in \{1, -2\}.
\eq
Thus $i\sigma$ is a {\it double eigenvalue} of $\cL_{0,\beta_*}$ for all $h > 0$. Moreover, the first inequality in \eqref{dOmpm} implies that $i\sigma$ is {\it isolated} in the spectrum of  $\cL_{0,\beta_*}$. The  eigenspace associated  to $i \sigma$ is 
\bq     \label{basisU}
N(\cL_{0,\beta_*}-i\sigma\I)=\text{span}\{U_{1}, U_{2}\},
\eq
where $\I$ denotes the identity operator and 
\begin{align}      \label{def:U}
U_{1}(x;h)&=\begin{bmatrix} i(1+\beta_* )^\frac14\tanh^\mez\left(h(1+\beta_*)^\mez \right)\\ 1 \end{bmatrix}e^{ix}, \\ U_{2}(x;h) &=\begin{bmatrix} -i(4+\beta_*)^\frac14\tanh^\mez\left(h(4+\beta_*)^\mez \right) \\ 1 \end{bmatrix}e^{-2ix}.
\end{align}
For ease of notation, we define 
  \bq\label{def:gammaj}
 \g_{j} := (j^2 + \beta_*)^\frac14\tanh^\mez\left(h\left(j^2+\beta_*\right)^\mez \right),
  \eq
  so that 
  \bq\label{sigma:gamma12}
  \sigma=c_0-\g_{1}=-2c_0+\g_{2}.
  \eq 
 
\subsection{Kato's  perturbed basis} 
We fix any depth $h\in (0, \infty)$ and let $i\sigma$ be the isolated double eigenvalue \eqref{def:sigma} of $\cL_{0, \beta_*}$. Then we fix  $\Gamma$ a  circle centered at  $i\sigma$ with sufficiently small radius so that $\Gamma\subset \rho(\cL_{\eps,\beta})$ for $(\eps, \beta)$ close to $(0 ,\beta_*)$. We denote the spectral projection associated to $\Gamma$ by
\bq\label{def:Peps}
P_{\eps, \delta} = -\frac1{2\pi i} \int_\Gamma (\cL_{\eps, \beta_*+\delta} - \lb)^{-1} d\lb\in \cL\big((L^2(\T))^2, (H^1(\T))^2\big).  \eq 
The analyticity \eqref{analyticity:cL} of $\cL_{\eps,\beta}$ implies that $P_{\eps, \delta}$ is analytic in $(\eps, \delta)$ near $(0, 0)$.
 Recall from \eqref{basisU} that the set $\{U_1, U_2\}$ forms a basis of the range $\cV_{0, 0}:=R(P_{0,0})$.  
Our goal is to perturb the basis in a convenient way.  Following Kato \cite{Kato} and Berti {\it et al} \cite{Berti1}, we define perturbing transformation operators $\cK_{\eps, \delta}$ as 
\bq \label{def:Kato}
\cK_{\eps, \delta} := \{1-(P_{\eps, \delta}-P_{0,0})^2\}^{-\mez} \{P_{\eps, \delta} P_{0,0} + (1-P_{\eps, \delta})(1-P_{0,0})\}   \eq
and a {\it perturbed basis} $\{U^{\eps,\delta}_1, U^{\eps,\delta}_2\}$ by 
\bq
U^{\eps, \delta}_m=\cK_{\eps, \delta} U_m \quad (m=1,2).
\eq
Since $P_{\eps, \delta}$ is analytic in $(\eps, \delta)$, so are $\cK_{\eps, \delta}$ and $U^{\eps, \delta}_m$. Moreover, when acting on $U_m$, the last term from \eqref{def:Kato} 
in $\cK_{\eps, \delta} U_m$ vanishes since $(1-P_{0,0})U_m=0$.  By virtue of  Lemma 3.1 in \cite {Berti1}, the perturbed range $\cV_{\eps, \delta}:=R(P_{\eps, \delta})$ is spanned by $\{U^{\eps,\delta}_1, U^{\eps,\delta}_2\}$, and we have 
\[
\cL_{\eps, \beta_*+\delta}: \cV_{\eps, \delta}\to \cV_{\eps, \delta}.
\]
 By direct calculation, we obtain
\bq\label{JUU:0}
(JU_1, U_2)=(JU_2, U_1)=0,\quad (JU_1, U_1)=-i4\pi \g_1,\quad (JU_2, U_2)=i4\pi \g_2.
\eq
Since  $\cK_{\eps, \delta}$ is symplectic, {\it i.e.} $\cK_{\eps, \delta}^*J\cK_{\eps, \delta}=J$ (see Lemma 3.2 in \cite{Berti1}), we have $(J\cK_{\eps, \delta}U, \cK_{\eps, \delta} V)=(JU, V)$. Consequently,  \eqref{JUU:0} yields
\bq\label{JUU}
(JU_1^{\eps, \delta}, U_2^{\eps, \delta})=(JU_2^{\eps, \delta}, U_1^{\eps, \delta})=0,\quad (JU_1^{\eps, \delta}, U_1^{\eps, \delta})=-i4\pi \g_1,\quad (JU_2^{\eps, \delta}, U_2^{\eps, \delta})=i4\pi \g_2.
\eq
 In view of \eqref{JUU:0} and \eqref{JUU},  we  normalize the eigenvectors according to 
\bq
V_m=\frac{1}{\sqrt{\gamma_m}}U_m,\quad V_m^{\eps, \delta}=\frac{1}{\sqrt{\gamma_m}}U^{\eps, \delta}_m.
\eq
\begin{lemm}
Using our notation  $\beta=\beta_*+\delta$, the {$2\times 2$ matrix}  
that represents the linear operator $\cL_{\eps,\beta}=J\cH_{\eps,\beta}:\cV_{\eps, \delta}\to \cV_{\eps, \delta}$ with respect to the basis $\{V_1^{\eps, \delta}, V_2^{\eps, \delta}\}$ is 
\bq\label{matrixL} 
\textrm{L}_{\eps,\delta}=\begin{bmatrix} -\frac{i}{4\pi }(\cH_{\eps,\beta} V_1^{\eps, \delta}, V_1^{\eps, \delta}) &  \frac{i}{4\pi }(\cH_{\eps,\beta} V_1^{\eps, \delta}, V_2^{\eps, \delta})   \\
- \frac{i}{4\pi }(\cH_{\eps,\beta} V_2^{\eps, \delta}, V_1^{\eps, \delta}) & \frac{i}{4\pi }(\cH_{\eps,\beta} U_2^{\eps, \delta}, V_2^{\eps, \delta}) 
\end{bmatrix}. 
\eq 
$\textrm{L}_{\eps, \delta}$ is purely imaginary and
\bq\label{L:reversediagonal}
(\textrm{L}_{\eps, \delta})_{12}=-(\textrm{L}_{\eps, \delta})_{21}.
\eq
\end{lemm}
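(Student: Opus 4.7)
The plan is to establish the three claims in sequence: derive the matrix formula \eqref{matrixL} via a symplectic projection, then exploit a reversibility of $\cL_{\eps,\beta}$ to show each inner product $(\cH_{\eps,\beta}V_n^{\eps,\delta}, V_m^{\eps,\delta})$ is real (which makes the entries imaginary), and finally combine reality with self-adjointness of $\cH_{\eps,\beta}$ to obtain \eqref{L:reversediagonal}.

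For the matrix formula, I would expand $\cL_{\eps,\beta} V_n^{\eps,\delta} = \sum_m (\textrm{L}_{\eps,\delta})_{nm} V_m^{\eps,\delta}$, apply $J$, and take the $L^2$ inner product with $V_m^{\eps,\delta}$. The orthogonality \eqref{JUU} kills the off-diagonal contributions, giving $(J\cL_{\eps,\beta}V_n^{\eps,\delta}, V_m^{\eps,\delta}) = (\textrm{L}_{\eps,\delta})_{nm}(JV_m^{\eps,\delta}, V_m^{\eps,\delta})$. Substituting $J\cL_{\eps,\beta} = J^2\cH_{\eps,\beta} = -\cH_{\eps,\beta}$ together with the explicit values from \eqref{JUU} reproduces \eqref{matrixL} entry by entry.

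For the reality assertion, I would introduce the antilinear involution $\tilde{S}(v_1,v_2) := (Sv_1, -Sv_2)$, where $(Sf)(x) := \overline{f(-x)}$. Since $p$, $q$, $\zeta'$ are real-valued and even in $x$ (Proposition~\ref{prop:expandpq}), since $S\p_x = -\p_x S$, and since \eqref{cG:even} amounts to $S\cG_{\eps,\beta} = \cG_{\eps,\beta}S$, a direct block-by-block check gives $\cH_{\eps,\beta}\tilde{S} = \tilde{S}\cH_{\eps,\beta}$; combined with the elementary identity $J\tilde{S} = -\tilde{S}J$ this yields the reversibility $\tilde{S}\cL_{\eps,\beta} = -\cL_{\eps,\beta}\tilde{S}$. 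A direct computation from \eqref{def:U} gives $\tilde{S}U_m = -U_m$ for $m=1,2$. Using $\tilde{S}(\cL_{\eps,\beta}-\lambda)^{-1} = -(\cL_{\eps,\beta}+\bar\lambda)^{-1}\tilde{S}$ together with the fact that the contour $\Gamma$, being a circle centered at the purely imaginary point $i\sigma$, is invariant as a set under $\lambda\mapsto -\bar\lambda$, a change of variable in \eqref{def:Peps} yields $\tilde{S}P_{\eps,\delta} = P_{\eps,\delta}\tilde{S}$; hence $\cK_{\eps,\delta}$ also commutes with $\tilde{S}$, and $\tilde{S}V_m^{\eps,\delta} = -V_m^{\eps,\delta}$. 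The substitution $x\mapsto -x$ under the $L^2$ integral produces the antiunitarity $(\tilde{S}u, \tilde{S}w) = \overline{(u,w)}$, and so
\[
(\cH_{\eps,\beta}V_n^{\eps,\delta}, V_m^{\eps,\delta}) = \overline{(\tilde{S}\cH_{\eps,\beta}V_n^{\eps,\delta}, \tilde{S}V_m^{\eps,\delta})} = \overline{(\cH_{\eps,\beta}V_n^{\eps,\delta}, V_m^{\eps,\delta})},
\]
which proves reality; consequently every entry of $\textrm{L}_{\eps,\delta}$ is a real scalar times $\pm i/(4\pi)$, so $\textrm{L}_{\eps,\delta}$ is purely imaginary.

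For \eqref{L:reversediagonal}, the self-adjointness of $\cH_{\eps,\beta}$ gives $(\cH_{\eps,\beta}V_1^{\eps,\delta}, V_2^{\eps,\delta}) = \overline{(\cH_{\eps,\beta}V_2^{\eps,\delta}, V_1^{\eps,\delta})}$, which by the reality established above equals $(\cH_{\eps,\beta}V_2^{\eps,\delta}, V_1^{\eps,\delta})$; substituting into the $(1,2)$ and $(2,1)$ entries of \eqref{matrixL} yields \eqref{L:reversediagonal} at once. The main obstacle is the verification that the antilinear $\tilde{S}$ commutes with the spectral projection $P_{\eps,\delta}$: one must track a complex contour integral under an antilinear operator, and it is precisely the fixed-point property $-\overline{i\sigma} = i\sigma$ (so that $\Gamma$ is preserved as a set) together with the orientation reversal induced by $\lambda\mapsto-\bar\lambda$ that cancel the antilinearity-induced sign change to produce the required commutation; the argument would fail if the unperturbed double eigenvalue were not purely imaginary.
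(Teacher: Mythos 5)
Your proof is correct and follows the same route the paper takes: the matrix formula via the symplectic pairing \eqref{JUU} and $J\cL_{\eps,\beta}=-\cH_{\eps,\beta}$, the purely imaginary property via the antilinear reversor built from $(Sf)(x)=\overline{f(-x)}$ (which commutes with $P_{\eps,\delta}$ and $\cK_{\eps,\delta}$ precisely because the double eigenvalue $i\sigma$ is purely imaginary, so the circle $\Gamma$ is preserved under $\lambda\mapsto-\bar\lambda$), and \eqref{L:reversediagonal} via self-adjointness of $\cH_{\eps,\beta}$ together with the reality just obtained. The paper only cites \eqref{JUU}, \eqref{cG:even}, and the evenness of $p,q,\zeta'$ and refers to \cite{CreNguStr} for details; you have supplied exactly those details.
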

\begin{proof}
The claim \eqref{matrixL}  follows from \eqref{JUU} and the Hamiltonian form $\cL_{\eps, \beta}=J \cH_{\eps, \beta}$. We refer to the proof of Lemma 3.1 in \cite{CreNguStr} for details. The second claim follows from the property \eqref{cG:even} of $\cG_{\eps, \beta}$ and the fact that the functions $p, q, \zeta'$ are real and even; see the end of Section 3.2 in \cite{CreNguStr}. 
\end{proof} 
We have reduced the original  transverse instability problem to the spectral analysis of the $2\times 2$ matrix $\textrm{L}_{\eps, \delta}$. To capture the unstable eigenvalues of $\textrm{L}_{\eps, \delta}$, we shall expand $\textrm{L}_{\eps, \delta}$ to third order in $\eps$ and $\delta$ in the next section.  
\section{Third-order expansions}
\subsection{ Third-order expansions of $\cL_{\eps,\beta}$}\label{Sec:R}
 We  recall the Hamiltonian operator \eqref{def:Hamiltonian}
 \[
 \cH_{\eps,\beta}=\begin{bmatrix} \frac{1 +q_{}(x)}{\zeta_{}'(x)} & -p_{}(x)\p_x \\ \p_x(p_{}(x)\cdot) & \cG_{\eps,\beta}\end{bmatrix}.
 \]
By virtue of Proposition \ref{prop:expandpq} and  Theorem \ref{theo:flattenG} (ii), for each $h>0$, $\cH_{\eps,\beta}$ is analytic in $(\eps, \beta)\in (-\eps_0(1), \eps_0(1))\times (0, \infty)$ with values in $\mathcal{L}(H^1(\T), L^2(\T))$. In particular, for fixed $\beta>0$, $\cG_{\eps,\beta}$ can be expanded in power series of $\eps$ as
 \bq\label{expandcG}
 \cG_{\eps,\beta}=\sum_{j=0}^\infty \eps^j R_{j,\beta},\quad |\eps|<\eps_0(1).
 \eq
 Then, invoking  the expansions in Proposition  \ref{prop:expandpq}, we obtain the third-order expansion in $\eps$ of $\cH_{\eps, \beta}$:
 \bq    \label{expandHepsbeta}
\cH_{\eps,\beta}=\sum_{j=0}^3\eps^j\cH^{j}_{\beta}+O(\eps^4),
\eq
			where 
\bq
\begin{aligned} \label{Hexpansion:eps}
&\cH^{0}_{\beta}=\begin{bmatrix} 
1 &-c_0\p_x \\ c_0\p_x & R_{0,\beta}
\end{bmatrix},\quad c_0=\sqrt{\tanh(h)}, \\
&\cH^{1}_{\beta}=\begin{bmatrix} 
r_{1,1}\cos x& -p_{1,1} \cos x\p_x\\ \p_x(p_{1,1}\cos x\cdot)  &   R_{1,\beta}
\end{bmatrix},\\
& \cH^{2}_{\beta}=\begin{bmatrix} 
r_{2,0}+r_{2,2}\cos(2x)& -\big[p_{2,0}+p_{2,2}\cos(2x)\big]\p_x\\ \p_x\{[p_{2,0}+p_{2,2}\cos(2x)]\cdot)\} & R_{2,\beta}
\end{bmatrix},\\
&\cH^{3}_{\beta}=\begin{bmatrix}
r_{3,1}\cos x+r_{3,3}\cos(3x)& -\big[p_{3,1}\cos x+p_{3,3}\cos(3x)\big]\p_x\\ \p_x\left\{\big[p_{3,1}\cos x+p_{3,3}\cos(3x)\big]\cdot\right\} & R_{3,\beta} 
\end{bmatrix}.
\end{aligned}
\eq
  The coefficients $r_{1,1},\dots, p_{3,3}$ are given in Prop. \ref{prop:expandpq}.  
 The nonlocal operators $R_{j,\beta}$ are Fourier multipliers.   
 We explicitly compute them for $0 \leq j \leq 3$ in the following proposition.  
\begin{prop}\label{prop:Rj}
The Fourier multipliers $R_j$ for $0 \leq j \leq 3$ take the form
\begin{align} \label{form:R0} &\wh{R_{0,\beta}f}(k)=\textcolor{black}{A^{0}_{k,\beta}}\wh{f}(k)=\sqrt{|k|^2+\beta}\ \tanh(h\sqrt{|k|^2+\beta})\wh{f}(k),\\\label{form:R1}
&\wh{R_{1,\beta}f}(k)=\textcolor{black}{B^{-1}_{k,\beta}}\wh{f}(k-1)+\textcolor{black}{B^{1}_{k,\beta}}\wh{f}(k+1),\\\label{form:R2}
&\wh{R_{2,\beta}f}(k)=C^{-2}_{k,\beta}\wh{f}(k-2)+C^0_{k,\beta}\wh{f}(k)+C^{2}_{k,\beta}\wh{f}(k+2),\\ \label{form:R3}
&\wh{R_{3,\beta}f}(k)= D^{-3}_{k,\beta}\wh{f}(k-3)+ D^{-1}_{k,\beta}\wh{f}(k-1) + D^{1}_{k,\beta}\wh{f}(k+1)+D^{3}_{k,\beta}\wh{f}(k+3), 
\end{align}
where the coefficients $A^s_{k,\beta}$, $B^s_{k,\beta}$, $C^s_{k,\beta}$, and $D^s_{k,\beta}$ are explicit functions of $k$, $\beta$, and $h$ derived below.
\end{prop}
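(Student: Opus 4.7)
The plan is to identify $R_{j,\beta}$ for $0\le j\le 3$ by expanding the solution of the flattened elliptic system \eqref{system:Psi} in powers of $\varepsilon$ and then reading off its Neumann trace at $z=0$ via \eqref{cG:Psi}. Specializing the two-variable series \eqref{series:Tt} to $\beta^0=\beta$ gives $\Psi(\varepsilon,\beta)=\sum_{j\ge 0}\varepsilon^j\Psi_{j,0}$, with each $\Psi_{j,0}$ characterized by the cascade \eqref{system:Psi00}--\eqref{sys:Psij0}. Since $h_\varepsilon$ is an even function of $\varepsilon$ by Proposition \ref{prop:expandpq}, the analytic expansion $h/h_\varepsilon = \sum_\ell\mu_\ell\varepsilon^\ell$ has $\mu_\ell=0$ for $\ell$ odd, so combining the two expansions yields
\begin{equation*}
R_{j,\beta}f = \sum_{k+\ell=j}\mu_\ell\,\partial_z\Psi_{k,0}\big|_{z=0}.
\end{equation*}

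For $R_{0,\beta}$, the problem \eqref{system:Psi00} is the constant-coefficient Helmholtz equation on the flat strip $\Omega_0$, and separation of variables in $x$ gives, for each Fourier mode,
\begin{equation*}
\hat\Psi_{0,0}(k,z)=\hat f(k)\,\frac{\cosh(\omega_k(z+h))}{\cosh(\omega_kh)},\qquad \omega_k:=\sqrt{k^2+\beta},
\end{equation*}
so that $\partial_z\hat\Psi_{0,0}(k,0)=\omega_k\tanh(\omega_kh)\hat f(k)$, which matches \eqref{form:R0}. For $j\ge 1$, I would solve \eqref{sys:Psij0} iteratively. The source terms involve the Taylor coefficients $\chi_{j-k}(x,z)$ of $\tilde J$ together with the constants $\nu_{j-k}$. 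Using Propositions \ref{eta expansion} and \ref{prop:expandpq} combined with the Fourier formulas \eqref{z1}--\eqref{z2}, each $\chi_j(x,z)$ is an even-in-$x$ trigonometric polynomial in $x$ of degree at most $j$ and of parity matching $j\bmod 2$, with smooth $z$-profiles built from hyperbolic cosines and sines. This degree-plus-parity structure immediately forces the Fourier shift pattern asserted in \eqref{form:R1}--\eqref{form:R3}: $\chi_1$ couples $k\leftrightarrow k\pm 1$; $\chi_2$ couples $k\leftrightarrow k,k\pm 2$; and $\chi_3$ couples $k\leftrightarrow k\pm 1,k\pm 3$.

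Having fixed the structure, the explicit coefficients $B^s_{k,\beta}$, $C^s_{k,\beta}$, and $D^s_{k,\beta}$ are obtained by solving, Fourier mode by Fourier mode, the inhomogeneous ODE in $z$ whose left-hand side is $-\omega_k^2+\partial_z^2$ and whose right-hand side collects the relevant products of the $\chi$-modes with the previously computed $\hat\Psi_{m,0}(k',z)$. A Green's function built from the basis $\{\cosh(\omega_k(z+h)),\sinh(\omega_k(z+h))\}$, adapted to the homogeneous Dirichlet condition at $z=0$ and the Neumann condition at $z=-h$, produces closed-form expressions. Differentiating in $z$ at $z=0$ and multiplying by the appropriate $\mu_\ell$ yields the multipliers.

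The main obstacle is not structural but algebraic. At third order the recursion couples $\Psi_{0,0}$, $\Psi_{1,0}$, and $\Psi_{2,0}$ through all of $\chi_1,\chi_2,\chi_3$ and the $\nu$-weights, and each Green's-function convolution produces sums of rational expressions in $\tanh$, $\sinh$, $\cosh$ evaluated at $\omega_{k}h,\omega_{k\pm 1}h,\omega_{k\pm 2}h,\omega_{k\pm 3}h$. While $R_0$ and $R_1$ can be derived by hand, simplifying the resulting formulas for $C^s_{k,\beta}$ and especially $D^s_{k,\beta}$ into the compact form needed for the subsequent discriminant analysis is best performed with a computer algebra system, consistent with the authors' reliance on their Mathematica companion file.
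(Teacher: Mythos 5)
Your approach is correct but follows a genuinely different route from the paper's. You reuse the rescaled cascade $\Psi_{j,0}$ from the proof of Theorem~\ref{theo:flattenG}, which is posed on the fixed strip $\Omega_0$ with homogeneous Dirichlet and Neumann conditions, and recover $R_{j,\beta}$ from the relation $\cG_{\eps,\beta} f = \tfrac{h}{h_\eps}\p_z\Psi(\eps,\beta)\big|_{z=0}$ by convolving the $\eps$-expansion of $h/h_\eps$ with that of $\p_z\Psi|_{z=0}$. The paper instead works directly with $\Theta$ and performs a classical boundary perturbation: it Taylor-expands the Neumann condition $\p_z\Theta(x,-h_\eps)=0$ about $z=-h$, so the cascade \eqref{sys:Tt0}--\eqref{sys:Tt3} is posed on $\{-h<z<0\}$ with the unperturbed Helmholtz operator but \emph{inhomogeneous} Neumann data (e.g.\ $\p_z\Theta^2(x,-h)=h_2\p_z^2\Theta^0(x,-h)$). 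The two methods encode the moving boundary differently --- yours through the $\nu_j$ weights in the source terms of the $\Psi_{j,0}$ together with the outer factor $h/h_\eps$, the paper's through the perturbed Neumann data --- and both produce the same multipliers. Yours has the advantage of reusing machinery already in place and avoiding the formal extrapolation of $\Theta$ past $z=-h_\eps$; the paper's is notationally lighter once one accepts the boundary expansion. Two caveats worth flagging: (1) the absence of a $k\mapsto k$ shift in $R_{3,\beta}$ relies on $h_\eps$ being an even function of $\eps$ (so that $\mu_1=\mu_3=0$ and $\nu_1=\nu_3=0$), a Stokes-wave symmetry ($\eta^*(\cdot;-\eps)=\eta^*(\cdot+\pi;\eps)$) which both proofs use implicitly --- the paper by writing only the $h_2$-term in the boundary data for $\Theta^3$; (2) when invoking the parity of $\chi_j$, note that $\tilde J(x,w;\eps)=\cJ\big(x,(h_\eps/h)w;\eps\big)$, so the $\chi_j$ mix the coefficients $\cJ_m$ with $m\le j$ through the $z$-rescaling --- this preserves the $x$-Fourier parity you need but shifts the $z$-profiles, and you must track that rescaling carefully to land on the same closed-form $C^s_{k,\beta}$ and $D^s_{k,\beta}$ as the paper's direct computation.
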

\begin{proof}

We write the expansions 
\[
\zeta=x+\eps\zeta^1+\eps^2\zeta^2+\eps^3\zeta^3+O(\eps^4),\quad \eta^*=\eps\eta^1+\eps^2\eta^2+\eps^3\eta^3+O(\eps^4),   
\]
where the coefficients are given in Prop. \ref{prop:expandpq} and Prop. \ref{eta expansion}.  
A Taylor expansion leads to 
\bq\label{compose:zeta}
\begin{aligned}
\eta^*(\zeta(x))&=\eps \eta^1(x)+\eps^2(\zeta^1\p_x\eta^1(x)+\eta^2(x))\\
&\quad+\eps^3\left\{\zeta^2(x)\p_x\eta^1(x)+\mez(\zeta^1(x))^2\p^2_x\eta^1(x)+\zeta^1(x)\p_x\eta^2(x)+\eta^3(x)\right\}+O(\eps^4).
\end{aligned}
\eq
Thereby we obtain 
\begin{align}
\eta^*(\zeta_{}(x))&
=\eps \cos x+\frac12 \eps^2\big[2\eta_{2,0}-\zeta_{1,1} + (\zeta_{1,1}+2\eta_{2,2})\cos(2x) \big]+\frac18\eps^3  \Big[   \big(-\zeta_{1,1}^2-4\zeta_{2,2}-8\zeta_{1,1}\eta_{2,2} \nonumber \\
&
\quad +8\eta_{3,1} \big)\cos x + \big(\zeta_{1,1}^2 +4\zeta_{2,2}+8\zeta_{1,1}\eta_{2,2}+8\eta_{3,3}\big)\cos(3x)  \Big]  + O(\eps^4).  
\end{align} 
Taking the Fourier transform of this expression, we use the simple fact that 
$\widehat {\cos mx}(k) = \pi \delta(k-m)+\pi\delta(k+m)$.  
Inserting this expansion 
into the Fourier expansion of the Riemann stretch \eqref{z1}, we find that  
\begin{align*}
\p_x X&= 1 + \varepsilon \frac{C_{h+z}}{C_{h}}\zeta_{1,1}\cos x +2\eps^2\frac{C_{2(h+z)}}{C_{2h}}\zeta_{2,2}\cos(2x) + \eps^3\biggr[ \Big(\frac{d_2S_z}{C_h^2}\zeta_{1,1} + \frac{C_{h+z}}{C_h}\zeta_{3,1}\Big)\cos x \\ &\quad+3\frac{C_{3(h+z)}}{C_{3h}}\zeta_{3,3}\cos(3x) \biggr] + O(\eps^4), \\
 \p_zX&=\varepsilon \frac{S_{h+z}}{C_{h}}\zeta_{1,1}\sin x +2\eps^2\frac{S_{2(h+z)}}{C_{2h}}\zeta_{2,2}\sin(2x) + \eps^3\biggr[ \Big(\frac{d_2C_z}{C_h^2}\zeta_{1,1} + \frac{S_{h+z}}{C_h}\zeta_{3,1}\Big)\sin x \\ &\quad+3\frac{S_{3(h+z)}}{C_{3h}}\zeta_{3,3}\sin(3x) \biggr] + O(\eps^4),
\end{align*}
where we have defined   
$C_{s} := \cosh(s) \quad \textrm{and} \quad S_{s} := \sinh(s).  $
We have also used the expansion of $h_{\eps}$ in \eqref{heps expansion} and substituted 
$\sinh(mh) = \cosh(mh) \tanh(mh)$  in the denominators,as well as expansions of $\tanh(mh)$ 
and standard trigonometric identities.  
It follows that the Jacobian is 
\begin{align*}      
\cJ&=1+2\eps\frac{C_{h+z}}{C_h}\zeta_{1,1}\cos x + \eps^2\biggr[ \frac{C_{2(h+z)}}{2C_h^2}\zeta_{1,1}^2 + \Big(4\frac{C_{2(h+z)}}{C_{2h}}\zeta_{2,2}+\frac{1}{2C_h^2}\zeta_{1,1}^2\Big)\cos(2x)\biggr] \\ &\quad+2\eps^3\biggr[ \Big(\frac{d_2S_z}{C_h^2}\zeta_{1,1}+ \frac{C_{3(h+z)}}{C_hC_{2h}}\zeta_{1,1}\zeta_{2,2} + \frac{C_{h+z}}{C_h}\zeta_{3,1}\Big)\cos x + \Big(\frac{C_{h+z}}{C_hC_{2h}}\zeta_{1,1}\zeta_{2,2}+3\frac{C_{3(h+z)}}{C_{3h}}\zeta_{3,3} \Big)\cos(3x) \biggr] \\ &\quad+O(\eps^4). 
\end{align*}

 Given these various expansions, we return to $\Theta=\Theta_f$ which satisfies  
\bq
\begin{cases}
\Delta_{x, z}\Tt-\beta\cJ\Tt=0\quad\text{in } \{(x, z): -h_\eps < z<0\},\\
\Tt(x, 0)=f(x), \quad     
\partial_z\Tt(x,-h_\eps) =  0,
\end{cases}
\eq
as defined in \eqref{system:Theta}. 
We recall from the proof of  Theorem \ref{theo:flattenG} that $\cG_{\eps,\beta}f=\p_z\Tt(\cdot, 0)$ and 
that $\Tt$ is analytic in $(\varepsilon, \beta)$ for each $h > 0$. In particular, for fixed $\beta>0$ and $h > 0$,  we can expand $\Tt=\Tt^0+\eps \Tt^1+\eps^2\Tt^2+O(\eps^3)$, where $\Tt^0$, $\Tt^1$, $\Tt^2$, and $\Tt^3$ satisfy, by use of the expansion of the Jacobian, the equations 
\bq\label{sys:Tt0}
\begin{cases}
\Delta_{x, z}\Tt^0-\beta\Tt^0=0\quad\text{in } \{(x, z): -h<z<0\},\\
\Tt^0(x, 0)=f(x),\\
\p_z\Tt^0(x,-h)= 0,
\end{cases}
\eq
\bq\label{sys:Tt1}
\begin{cases}
\Delta_{x, z}\Tt^1-\beta\Tt^1= 2\beta\frac{C_{h+z}}{C_h}\zeta_{1,1}(\cos x) \Tt^0\quad\text{in } \{(x, z): -h<z<0\},\\
\Tt^1(x, 0)=0,\\
\p_z\Tt^1(x,-h)=0,
\end{cases}
\eq
\bq\label{sys:Tt2}
\begin{cases}\begin{aligned}
&\Delta_{x, z}\Tt^2-\beta\Tt^2=\beta\biggr[ \frac{C_{2(h+z)}}{2C_h^2}\zeta_{1,1}^2 + \Big(4\frac{C_{2(h+z)}}{C_{2h}}\zeta_{2,2}+\frac{1}{2C_h^2}\zeta_{1,1}^2\Big)\cos(2x)\biggr]\Tt^0 \\ &\hspace{3.1cm} +  2\beta\frac{C_{h+z}}{C_h}\zeta_{1,1}\cos x \Tt^1 \quad \text{in } \{(x, z): -h<z<0\},\\
&\Tt^2(x, 0)=0,\\
&\p_z\Tt^2(x,-h) = h_2\partial_z^2\Tt^0(x,-h),
\end{aligned}
\end{cases}
\eq
\bq\label{sys:Tt3}
\begin{cases}
\begin{aligned}\Delta_{x, z}\Tt^3-\beta\Tt^3&= 2\beta\biggr[ \Big(\frac{d_2S_z}{C_h^2}\zeta_{1,1}+ \frac{C_{3(h+z)}}{C_hC_{2h}}\zeta_{1,1}\zeta_{2,2} + \frac{C_{h+z}}{C_h}\zeta_{3,1}\Big)\cos x + \Big(\frac{C_{h+z}}{C_hC_{2h}}\zeta_{1,1}\zeta_{2,2} \\&\hspace{0.85cm}+3\frac{C_{3(h+z)}}{C_{3h}}\zeta_{3,3} \Big)\cos(3x) \biggr]\Tt^0 + \beta\biggr[ \frac{C_{2(h+z)}}{2C_h^2}\zeta_{1,1}^2 + \Big(4\frac{C_{2(h+z)}}{C_{2h}}\zeta_{2,2} \\&\hspace{0.85cm}+\frac{1}{2C_h^2}\zeta_{1,1}^2\Big)\cos(2x)\biggr]\Tt^1 +2\beta\frac{C_{h+z}}{C_h}\zeta_{1,1}\cos x \Tt^2  \quad \text{in } \{(x, z): -h<z<0\},
\end{aligned}\\
\Tt^3(x, 0)=0,\\
\p_z\Tt^3(x,-h) = h_2\p_z^2\Tt^1(x,-h).
\end{cases}
\eq
Comparing with the expansion \eqref{expandcG}, we find that $R_{j,\beta}f=\p_z\Tt^j(\cdot, 0)$. 

We consider the functions $\Theta^j$, one at a time.  For $j=0$, we take the Fourier transform of \eqref{sys:Tt0} in $x$, leading to the new boundary value problem
\bq\label{sys:Tt0:v2}
\begin{cases}
\partial_z^2\wh{\Tt^0}-(k^2+\beta)\wh{\Tt^0}=0\quad\text{in } \{(k, z): -h<z<0\},\\
\wh{\Tt^0}(x, 0)=\wh{f}(k),\\
\p_z\wh{\Tt^0}(k,-h)= 0,
\end{cases}
\eq
For ease of notation, we define the functions
\begin{align}
\mathcal{C}_{i,j}(z) := \cosh\left\{z\left[i+\left((j+k)^2+\beta \right)^\mez \right] \right\}, \\ \mathcal{S}_{i,j}(z) := \sinh\left\{z\left[i+\left((j+k)^2+\beta \right)^\mez \right]\right\}.
\end{align} 
The general solution of \eqref{sys:Tt0:v2} may be expressed in terms of these functions as
\begin{align}
\wh{\Tt^0}(k,z) = \mathcal{A}^{(0)}_1\mathcal{C}_{0,0}(z) + \mathcal{A}^{(0)}_{2}\mathcal{S}_{0,0}(z),
\end{align}
where $\mathcal{A}^{(0)}_1$ and $\mathcal{A}^{(0)}_2$ are constants. Enforcing the requisite boundary conditions on $\wh{\Tt^0}$, we find
\begin{align}
\mathcal{A}^{(0)}_1 &= 1, \\
\mathcal{A}^{(0)}_2 &=  \tanh\left(h(k^2+\beta)^\mez\right),
\end{align}
so that
\bq\label{Tt0}
\wh{\Tt^0}(k, z)=\Big[\mathcal{C}_{0,0}(z)+ \tanh\left(h(k^2+\beta)^\mez\right)\mathcal{S}_{0,0}(z) \Big]\wh{f}(k).
\eq
Consequently,
\bq
\partial_z\wh{\Tt^0}(k,0) = \wh{R_{0,\beta}f}(k)=A^0_{k,\beta}\wh{f}(k), \quad \text{where} \quad A^0_{k,\beta} := (k^2+\beta)^\mez\tanh(h(k^2+\beta)^\mez)  
\eq 
with the notation \eqref{expandcG}.  
In other words, $R_{0,\beta}$ is the  Fourier multiplier operator, 
\bq
R_{0,\beta}f=(|D|^2+\beta)^\mez\tanh(h(|D|^2+\beta)^\mez)f, \quad D = -i\partial_x,
\eq
which is equivalent to $\cG_{0, \beta}f$ given in \eqref{form:G0}.
\begin{rema}
    If $\beta = 0$, we find $R_{0,0} = |D|\tanh(h|D|)$, in agreement with \cite{Berti2}. 
    If instead $\beta \neq 0$ and we let $h \rightarrow \infty$, we find $R_{0,\beta} \rightarrow (|D|^2+\beta)^\mez$, in agreement with \cite{CreNguStr}.
\end{rema}
We can follow a similar series of calculations to determine $R_{1,\beta}$. The Fourier transform in $x$ of the equation for $\Tt^1$ given in \eqref{sys:Tt1} is 
 \bq\label{ODE:g}
 \begin{aligned}
\p_z^2\wh{\Tt^1}-(k^2+\beta)\wh{\Tt^1}&= \beta \frac{C_{h+z}}{C_h}\zeta_{1,1}\Big[\wh{\Tt^0}(-1+k,z) + \wh{\Tt^0}(1+k,z) \Big].
\end{aligned}
\eq
Substituting \eqref{Tt0} into the right side of \eqref{ODE:g} and expanding, the equation for $\Theta^1$ simplifies to
\begin{align}
\p_z^2\wh{\Tt^1}-(k^2+\beta)\wh{\Tt^1} &= \sum_{i,j \in \{-1,1\}}\Big(\mathfrak{c}^{(1)}_{i,j}\mathcal{C}_{i,j}(z) + \mathfrak{s}^{(1)}_{i,j}\mathcal{S}_{i,j}(z)\Big),
\end{align}
where
\allowdisplaybreaks
\begin{align}
\mathfrak{c}^{(1)}_{-1,-1} &= -\frac12 \beta\zeta_{1,1}\Big\{-1 + \tanh(h)\tanh\left[h((-1+k)^2+\beta)^\mez\right]\Big\}\wh{f}(-1+k), \\
\mathfrak{c}^{(1)}_{1,-1} &= \frac12 \beta\zeta_{1,1}\Big\{1 + \tanh(h)\tanh\left[h((-1+k)^2+\beta)^\mez\right]\Big\}\wh{f}(-1+k), \\
\mathfrak{c}^{(1)}_{-1,1} &= -\frac12 \beta\zeta_{1,1}\Big\{-1 + \tanh(h)\tanh\left[h((1+k)^2+\beta)^\mez\right]\Big\}\wh{f}(1+k), \\
\mathfrak{c}^{(1)}_{1,1} &= \frac12 \beta\zeta_{1,1}\Big\{1 + \tanh(h)\tanh\left[h((1+k)^2+\beta)^\mez\right]\Big\}\wh{f}(1+k), \\
\mathfrak{s}^{(1)}_{-1,-1} &= \frac12 \beta\zeta_{1,1}\Big\{-\tanh(h)+\tanh\left[h((-1+k)^2+\beta)^\mez \right]\Big\}\wh{f}(-1+k), \\
\mathfrak{s}^{(1)}_{1,-1} &= \frac12 \beta\zeta_{1,1}\Big\{\tanh(h)+\tanh\left[h((-1+k)^2+\beta)^\mez \right]\Big\}\wh{f}(-1+k), \\
\mathfrak{s}^{(1)}_{-1,1} &= \frac12 \beta\zeta_{1,1}\Big\{-\tanh(h)+\tanh\left[h((1+k)^2+\beta)^\mez \right]\Big\}\wh{f}(1+k), \\
\mathfrak{s}^{(1)}_{1,1} &= \frac12 \beta\zeta_{1,1}\Big\{\tanh(h)+\tanh\left[h((1+k)^2+\beta)^\mez \right]\Big\}\wh{f}(1+k).
\end{align}
The general solution of \eqref{ODE:g} is
\begin{equation}       \label{Theta1}
\wh{\Tt^1}(k,z) = \mathcal{A}^{(1)}_1 \mathcal{C}_{0,0}(z) + \mathcal{A}^{(1)}_2\mathcal{S}_{0,0}(z) + \sum_{i,j\in\{-1,1\}}\Big( \tilde{\mathfrak{c}}^{(1)}_{i,j}\mathcal{C}_{i,j}(z) +\tilde{\mathfrak{s}}^{(1)}_{i,j}\mathcal{S}_{i,j}(z) \Big),
\end{equation}
where $\mathcal{A}^{(1)}_1$ and $\mathcal{A}^{(1)}_2$ are arbitrary constants and 
\begin{align}
\tilde{\mathfrak{c}}^{(1)}_{i,j} = \frac{\mathfrak{c}^{(1)}_{i,j}}{\Big[i + \left( (j+k)^2 + \beta \right)^\mez \Big]^2 - (k^2+\beta)}, \\
\tilde{\mathfrak{s}}^{(1)}_{i,j} = \frac{\mathfrak{s}^{(1)}_{i,j}}{\Big[i + \left( (j+k)^2 + \beta \right)^\mez \Big]^2 - (k^2+\beta)}.
\end{align}
Enforcing the requisite boundary conditions on $\wh{\Tt^1}$, we determine
\begin{align}
\mathcal{A}^{(1)}_1 &= -\sum_{i,j\in \{-1,1\}} \tilde{\mathfrak{c}}^{(1)}_{i,j}, \\
\mathcal{A}^{(1)}_2 &=  -\sum_{i,j\in \{-1,1\}} \biggr\{\Big[ \frac{i+\left((j+k)^2+\beta \right)^\mez}{\mathcal{C}_{0,0}(h)(k^2+\beta)^\mez}\Big]\Big[ -\tilde{\mathfrak{c}}^{(1)}_{i,j}\mathcal{S}_{i,j}(h) + \tilde{\mathfrak{s}}^{(1)}_{i,j}\mathcal{C}_{i,j}(h)\Big] + \tilde{\mathfrak{c}}^{(1)}_{i,j}\tanh\left(h\left( k^2+\beta \right)^\mez\right) \biggr\}.
\end{align}
A direct calculation from \eqref{sys:Tt0} yields 
\begin{align} \label{dzTt1}
\partial_z\wh{\Tt^1}(0,z) = \mathcal{A}_2^{(1)}(k^2+\beta)^\mez + \sum_{i,j\in\{-1,1\}} \tilde{\mathfrak{s}}^{(1)}_{i,j}\left(i + \left((j+k)^2+\beta \right)^\mez \right).
\end{align}
Substituting $\mathcal{A}^{(1)}_2$, $\tilde{\mathfrak{s}}^{(1)}_{i,j}$, and $\tilde{\mathfrak{c}}^{(1)}_{i,j}$ into \eqref{dzTt1} and expanding, we find that 
\begin{align}
\partial_z\wh{\Tt^1}(0,z) = B_{k,\beta}^{-1}\wh{f}(-1+k) + B_{k,\beta}^1\wh{f}(1+k),
\end{align}
where
\allowdisplaybreaks
\begin{align}
B_{k,\beta}^{-1} &:= \frac{1}{2} \biggr\{\beta -\sqrt{\beta +(k-1)^2} \sqrt{\beta +k^2} \tanh \left(h \sqrt{\beta +(k-1)^2}\right) \tanh \left(h \sqrt{\beta +k^2}\right) \nonumber \\ &\quad +\coth (h) \left[k \sqrt{\beta +(k-1)^2} \tanh \left(h \sqrt{\beta +(k-1)^2}\right)-(k-1) \sqrt{\beta +k^2} \tanh \left(h \sqrt{\beta +k^2}\right)\right] \nonumber \\ &\quad +k^2-k\biggr\}, \label{Bneg1} \\
B_{k,\beta}^{1} &:= \frac{1}{2} \biggr\{\beta -\sqrt{\beta +k^2} \sqrt{\beta +(k+1)^2} \tanh \left(h \sqrt{\beta +k^2}\right) \tanh \left(h \sqrt{\beta +(k+1)^2}\right) \nonumber \\&\quad +\coth (h) \left[(k+1) \sqrt{\beta +k^2} \tanh \left(h \sqrt{\beta +k^2}\right)-k \sqrt{\beta +(k+1)^2} \tanh \left(h \sqrt{\beta +(k+1)^2}\right)\right] \nonumber \\ &\quad +k^2+k\biggr\}. \label{Bpos1}
\end{align}
It follows that $R_{1,\beta}$ is the operator given by 
\begin{align}
R_{1,\beta} =  B_{k,\beta}^{-1}\wh{f}(-1+k) + B_{k,\beta}^1\wh{f}(1+k).
\end{align}
\begin{rema}
   Because $\mathcal{G}_{\eps,\beta}$ is self-adjoint, we have $B^{-1}_{-k,\beta} = B^{1}_{k,\beta}$ for all $k \in \mathbb{R}$, in agreement with  \eqref{Bneg1} and \eqref{Bpos1}. Moreover, we easily verify the following limits hold at infinite depth:  
   \begin{align*}
\lim_{h \rightarrow +\infty} B^{-1}_{k,\beta} &= \frac{1}{2} \left(\beta -(k-1) \sqrt{\beta +k^2}-\sqrt{\left(\beta +(k-1)^2\right) \left(\beta +k^2\right)}+k^2+k \sqrt{\beta +(k-1)^2}-k\right), \\
\lim_{h \rightarrow +\infty} B^{1}_{k,\beta} &= \frac{1}{2} \left(\beta +(k+1) \sqrt{\beta +k^2}-\sqrt{\left(\beta +k^2\right) \left(\beta +(k+1)^2\right)}+k^2-k \sqrt{\beta +(k+1)^2}+k\right),
   \end{align*}
   which are consistent with the coefficients obtained in \cite{CreNguStr}.
\end{rema}
The calculations leading to $R_{2,\beta}$ and $R_{3,\beta}$ follow the same procedure as those for $R_{0,\beta}$ and $R_{1,\beta}$ shown above. However, the expressions involved in these calculations become extremely lengthy and arduous to work with by hand. We instead appeal to Mathematica to help us finish these calculations. The interested reader will find the explicit expressions for the coefficients of $R_{2,\beta}$ and $R_{3,\beta}$ in our companion Mathematica file {\it CompanionToTransverseInstabilityFiniteDepth.m}.
\end{proof}
Next, we expand the operators  $\cH^j_\beta$ in \eqref{expandHepsbeta}  to third order in $\delta=\beta-\beta_*$.   By virtue of Theorem \ref{theo:flattenG}  (ii), for each $h>0$, the operators $R_{j,\beta}$ depend analytically on $\beta \in (0, \infty)$. In particular, we have
 \bq\label{def:Sjl}
  R_{j,\beta}=\sum_{\ell=0}^3\delta^\ell R^{j,\ell}+O(\delta^4),
 \eq
 where of course the $R^{j,\ell}$ depend  on $\beta_*$ and $h$. Noting that $\beta$ only appears in the lower right corner of $\cH^j_{\beta}$, we set
 \bq
 \begin{aligned}
&\cH^{j ,0}=\cH^j_{\beta}\vert_{\delta=0},\quad  \cH^{j, \ell}=R^{j, \ell}\begin{bmatrix} 0 & 0\\ 0 &1 \end{bmatrix},\quad \ell \in \{1, 2, 3\}, 
 \end{aligned}
 \eq
 so that 
  \bq\label{def:Hjl}
 \cH^{j}_{\beta}=\sum_{\ell=0}^3 \delta^\ell\cH^{j, \ell}+O(\delta^4). 
 \eq
Combining \eqref{Hexpansion:eps} and \eqref{def:Hjl}, we obtain the third-order  expansion of 
the Hamiltonian $\cH_{\eps, \beta_*+\delta}$   in both  $\eps$ and $\delta$:
 \bq\label{expand:H}
 \cH_{\eps, \beta_*+\delta}=\sum_{j=0}^3\sum_{\ell=0}^{3}\eps^j\delta^\ell \cH^{j, \ell}+O(\eps^4+\delta^4).
 \eq
\subsection{Third-order expansions  of  $\textrm{L}_{\eps, \delta}$}\label{section:expansions}
We recall the basis $\{U_1,U_2\}$ from \eqref{def:U} and the projections $P_{\eps,\delta}$ from \eqref{def:Peps}.  
Since $P_{0,0}U_j=U_j$ for $j=1,2$, we have 
\begin{align*}
U^{\eps, \delta}_j&= \{1-(P_{\eps, \delta}-P_{0,0})^2\}^{-\mez} P_{\eps, \delta} U_j.
\end{align*}
Next, we record the following third-order expansions of $U^{\eps, \delta}_j$ and $\textrm{L}_{\eps, \delta}$. We refer to   \cite{CreNguStr} for the derivation of these formulas. 
Denoting 
\bq
P^{m, n}=\p_\eps^m\p_\delta^nP_{\eps, \delta}\vert_{(\eps, \delta)=(0, 0)},
\eq
we have 
\bq\label{expand:U}
U^{\eps, \delta}_j=U_j+\sum_{m+n=1}^3\eps^m\delta^nU^{(m, n)}_j+O((\eps+\delta)^4),
\eq
where the coefficients $U_j^{(m, n)}$ are given by
 \allowdisplaybreaks
\begin{align}\label{expand:U:start}
&U^{(1, 0)}_j=P^{1, 0}U_j,\quad U^{(0, 1)}_j=P^{0, 1}U_j,\\
&U^{(2, 0)}_j=\mez (P^{2, 0}+P^{1, 0}P^{1, 0})U_j,\quad U^{(0, 2)}_j=\mez (P^{0, 2}+P^{0, 1}P^{0, 1})U_j,\\
&U^{(1, 1)}_j=(P^{1, 1}+\mez P^{0, 1}P^{1, 0}+\mez P^{1, 0}P^{0, 1})U_j,\\
&U_j^{(3,0)} = \frac16\Big(P^{3,0} + \frac32\left(P^{2,0}P^{1,0}+P^{1,0}P^{2,0} \right)+3P^{1,0}P^{1,0}P^{1,0} \Big)U_j, \\
&U_j^{(0,3)}= \frac16\Big(P^{0,3} + \frac32\left(P^{0,1}P^{0,2}+P^{0,2}P^{0,1} \right)+3P^{0,1}P^{0,1}P^{0,1} \Big)U_j,\\
&U_j^{(2,1)} = \frac12\Big(P^{2,1} + \frac12\left(P^{0,1}P^{2,0} +2P^{1,0}P^{1,1}+2P^{1,1}P^{1,0}+P^{2,0}P^{0,1}\right) \nonumber \\
&\hspace{1cm} +\left(P^{1,0}P^{0,1}+P^{0,1}P^{1,0} \right)P^{1,0} + P^{1,0}P^{1,0}P^{0,1}\Big)U_j,
\\ \label{expand:U:end}
&U_j^{(1,2)} = \frac12\Big(P^{1,2} + \frac12\left(P^{1,0}P^{0,2} +2P^{1,1}P^{0,1}+2P^{0,1}P^{1,1}+P^{0,2}P^{1,0}\right) \nonumber \\
&\hspace{1cm} +\left(P^{1,0}P^{0,1}+P^{0,1}P^{1,0} \right)P^{0,1} + P^{0,1}P^{0,1}P^{1,0}\Big)U_j.
\end{align}
Here $P^{m, n}U_j$ are computed from the following contour integrals:
 \allowdisplaybreaks
\begin{align}\label{Pmn:start}
P^{1, 0}U_j&= \frac1{2\pi i} \int_\Gamma -S_\ld J\cH^{1, 0}U_j \frac{d\lb}{\ld-i\sigma},\\
 P^{0, 1}U_j&= \frac1{2\pi i} \int_\Gamma -S_\ld J\cH^{0, 1}U_j \frac{d\lb}{\ld-i\sigma},\\
P^{2, 0}U_j&=\frac1{2\pi i} \int_\Gamma S_\ld \Big(-2J\cH^{2, 0}+2J\cH^{1, 0} J\cH^{1, 0}\Big)U_j\frac{d\lb}{\ld-i\sigma},\\
P^{0, 2}U_j&=\frac1{2\pi i} \int_\Gamma S_\ld \Big(-2J\cH^{0, 2}+2 J\cH^{0, 1}S_\ld J\cH^{0, 1}\Big)U_j\frac{d\lb}{\ld-i\sigma},\\
P^{1, 1}U_j&=\frac1{2\pi i} \int_\Gamma S_\ld\Big(- J\cH^{1, 1}+ J\cH^{1, 0}S_\ld J\cH^{0, 1}+ J\cH^{0, 1}S_\ld J\cH^{1, 0}\Big)U_j\frac{d\lb}{\ld-i\sigma},\\
P^{3,0}U_j &= \frac{1}{2\pi i}\int_\Gamma 6S_\lambda\Big[-J H^{3,0} + \big(JH^{1,0}S_\lambda JH^{2,0} + JH^{2,0}S_\lambda JH^{1,0}\big) \nonumber\\
&\hspace{1cm}- JH^{1,0}S_\lambda JH^{1,0}S_\lambda JH^{1,0}\Big]U_j \frac{d\lb}{\ld-i\sigma}, 
\\
P^{0,3}U_j &= \frac{1}{2\pi i}\int_\Gamma 6S_\lambda\Big[-J H^{0,3} + \big(JH^{0,1}S_\lambda JH^{0,2} + JH^{0, 2}S_\lambda JH^{0, 1}\big) \nonumber\\
&\hspace{1cm}- JH^{0,1}S_\lambda JH^{0, 1}S_\lambda JH^{0, 1}\Big]U_j \frac{d\lb}{\ld-i\sigma},\\
P^{2,1}U_j &= \frac{1}{2\pi i}\int_\Gamma 2S_\lambda\Big[-J H^{2,1} +JH^{1,0}S_\lambda JH^{1,1} + JH^{0,1}S_\lambda JH^{2,0} \nonumber \\
&\hspace{1cm} + JH^{2,0}S_\lambda JH^{0,1} + JH^{1,1}S_\lambda JH^{1,0}  \nonumber
\\
&\hspace{1cm}  -JH^{1,0}S_\lambda \big(JH^{1,0}S_\lambda JH^{0,1}+ JH^{0,1}S_\lambda JH^{1,0}\big)\Big]U_j \frac{d\lb}{\ld-i\sigma},\\ \label{Pmn:end}
P^{1,2}U_j &= \frac{1}{2\pi i}\int_\Gamma 2S_\lambda\Big[-J H^{1, 2} +JH^{0, 1}S_\lambda JH^{1,1} + JH^{1, 0}S_\lambda JH^{0, 2} \nonumber \\
&\hspace{1cm} + JH^{0, 2}S_\lambda JH^{1, 0} + JH^{1,1}S_\lambda JH^{0, 1}  \nonumber
\\
&\hspace{1cm}  -JH^{0, 1}S_\lambda \big(JH^{0, 1}S_\lambda JH^{1, 0}+ JH^{0,1}S_\lambda JH^{0, 1}\big)\Big]U_j \frac{d\lb}{\ld-i\sigma}.
\end{align}
We recall that   $\textrm{L}_{\eps, \delta}$ given by \eqref{matrixL}  is the matrix representation of the linearized operator $\mathcal{L}_{\eps, \beta_*+\delta}$ with respect to the basis $\{V_j^{\eps, \delta}: j=1, 2\}$, where  $V_j^{\eps, \delta}=\frac{1}{\sqrt{\gamma_j}}U_j^{\eps, \delta}$. It follows from the expansion  \eqref{expand:U} of $U_j^{\eps, \delta}$ that
\bq\label{expand:V}
V^{\eps, \delta}_j=V_j+\sum_{m+n=1}^3\eps^m\delta^nV_j^{(m, n)}+O((\eps+\delta)^4),\quad V_j=\frac{1}{\sqrt{\gamma_j}}U_j,\quad V_j^{(m, n)}=\frac{1}{\sqrt{\gamma_j}}U_j^{(m, n)}.
\eq
Combining \eqref{expand:V} with the expansion \eqref{expand:H} for $\cH_{\eps, \beta_*+ \delta}$, we  expand  the inner products appearing in $L_{\eps, \delta}$ \eqref{matrixL} as
\bq
\left(\mathcal{H}_{\varepsilon,\beta_*+\delta}V_j^{\varepsilon,\delta},V_k^{\varepsilon,\delta} \right)=(\cH^{0, 0}V_j, V_k)+\sum_{m+n=1}^3 \left(\mathcal{H}_{\varepsilon,\beta_*+\delta}V_j^{\varepsilon,\delta},V_k^{\varepsilon,\delta} \right)_{m,n}\eps^m\delta^n+O((\eps+\delta)^4),
\eq
where
 \allowdisplaybreaks
\begin{align}\label{innerproduct:start}
 \left(\mathcal{H}_{\varepsilon,\beta_*+\delta}V_j^{\varepsilon,\delta},V_k^{\varepsilon,\delta} \right)_{1, 0}&=(\cH^{0, 0}V_j, V^{(1, 0)}_k)+(\cH^{1, 0}V_j, V_k)+(\cH^{0, 0}V_j^{(1, 0)}, V_k),\\
 \left(\mathcal{H}_{\varepsilon,\beta_*+\delta}V_i^{\varepsilon,\delta},V_{j}^{\varepsilon,\delta} \right)_{0, 1}&=(\cH^{0, 0}V_j, V^{(0, 1)}_k)+(\cH^{0, 1}V_j, V_k)+(\cH^{0, 0}V_j^{(0, 1)}, V_k),\\
 \left(\mathcal{H}_{\varepsilon,\beta_*+\delta}V_i^{\varepsilon,\delta},V_{j}^{\varepsilon,\delta} \right)_{2, 0}&=(\cH^{0, 0}V_j, V^{(2, 0)}_k)+(\cH^{2, 0}V_j, V_k) +(\cH^{1, 0}V^{(1, 0)}_j, V_k)\nonumber\\
&\hspace{1cm} +(\cH^{0, 0}V^{(2, 0)}_j, V_k)+(\cH^{1, 0}V_j, V^{(1, 0)}_k)+(\cH^{0, 0}V_j^{(1, 0)}, V_k^{(1, 0)}),\\
 \left(\mathcal{H}_{\varepsilon,\beta_*+\delta}V_i^{\varepsilon,\delta},V_{j}^{\varepsilon,\delta} \right)_{0, 2}&=(\cH^{0, 0}V_j, V^{(0, 2)}_k)+(\cH^{0, 2}V_j, V_k) +(\cH^{0, 1}V^{(0, 1)}_j, V_k)\nonumber\\
&\hspace{1cm}+(\cH^{0, 0}V^{(0, 2)}_j, V_k)+(\cH^{0, 1}V_j, V^{(0, 1)}_k)+(\cH^{0, 0}V_j^{(0, 1)}, V_k^{(0, 1)}),\\
 \left(\mathcal{H}_{\varepsilon,\beta_*+\delta}V_i^{\varepsilon,\delta},V_{j}^{\varepsilon,\delta} \right)_{1, 1}&=(\cH^{1, 1}V_j, V_k)+(\cH^{1, 0}V_j^{(0, 1)}, V_k)+(\cH^{0, 1}V_j^{(1, 0)}, V_k)\nonumber\\
&\hspace{1cm} +(\cH^{0, 0}V_j, V_k^{(1, 1)}) +(\cH^{0, 0}V_j^{(1, 1)}, V_k) +(\cH^{1, 0}V_j, V_k^{(0, 1)})\nonumber\\
&\quad+(\cH^{0, 0}V_j^{(1, 0)}, V_k^{(0, 1)})+(\cH^{0, 1}V_j, V_k^{(1, 0)})+(\cH^{0, 0}V^{(0, 1)}_j, V_k^{(1, 0)}),\\
\left(\mathcal{H}_{\varepsilon,\beta_*+\delta} V^{\varepsilon,\delta}_j,V^{\varepsilon,\delta}_k \right)_{3,0} &=\Big(\mathcal{H}^{3,0} V_j + \mathcal{H}^{2,0}V_j^{(1,0)} + \mathcal{H}^{1,0}V_j^{(2,0)} + \mathcal{H}^{0,0}V_j^{(3,0)},V_k\Big) \nonumber \\
&\hspace{1cm}+\Big(\mathcal{H}^{2,0}V_j + \mathcal{H}^{1,0}V_j^{(1,0)} + \mathcal{H}^{0,0}V_j^{(2,0)},V_k^{(1,0)}   \Big) \nonumber \\
&\hspace{1cm}+ \Big(\mathcal{H}^{1,0}V_j + \mathcal{H}^{0,0}V_j^{(1,0)},V_k^{(2,0)}   \Big) + \Big(\mathcal{H}^{0,0}V_j,V_k^{(3,0)}  \Big), \\
\left(\mathcal{H}_{\varepsilon,\beta_*+\delta} V^{\varepsilon,\delta}_j,V^{\varepsilon,\delta}_k \right)_{0,3} &=\Big(\mathcal{H}^{0,3} V_j + \mathcal{H}^{0,2}V_j^{(0,1)} + \mathcal{H}^{0,1}V_j^{(0,2)} + \mathcal{H}^{0,0}V_j^{(0,3)},V_k\Big) \nonumber \\
&\hspace{1cm}+\Big(\mathcal{H}^{0,2}V_j + \mathcal{H}^{0,1}V_j^{(0,1)} + \mathcal{H}^{0,0}V_j^{(0,2)},V_k^{(0,1)}   \Big) \nonumber \\
&\hspace{1cm}+ \Big(\mathcal{H}^{0,1}V_j + \mathcal{H}^{0,0}V_j^{(0,1)},V_k^{(0,2)}   \Big) + \Big(\mathcal{H}^{0,0}V_j,V_k^{(0,3)}  \Big),\\
\left(\mathcal{H}_{\varepsilon,\beta_*+\delta} V^{\varepsilon,\delta}_j,V^{\varepsilon,\delta}_k \right)_{2,1} &= \Big(\mathcal{H}^{2,1}V_j + \mathcal{H}^{2,0}V_j^{(0,1)} + \mathcal{H}^{1,1}V_j^{(1,0)} + \mathcal{H}^{1,0}V_j^{(1,1)} + \mathcal{H}^{0,1}V_j^{(2,0)} + \mathcal{H}^{0,0}V_j^{(2,1)}, V_k \Big) \nonumber \\
&\hspace{1cm}+\Big(\mathcal{H}^{2,0}V_j + \mathcal{H}^{1,0}V_j^{(1,0)} + \mathcal{H}^{0,0}V_j^{(2,0)},V_k^{(0,1)} \Big) \nonumber \\
&\hspace{1cm}+\Big(\mathcal{H}^{1,1}V_j + \mathcal{H}^{1,0}V_j^{(0,1)} + \mathcal{H}^{0,1}V_j^{(1,0)} + \mathcal{H}^{0,0}V_j^{(1,1)},V_{n}^{(1,0)} \Big) \nonumber \\
&\hspace{1cm}+ \Big(\mathcal{H}^{1,0}V_j + \mathcal{H}^{0,0}V_j^{(1,0)},V_k^{(1,1)}  \Big) + \Big(\mathcal{H}^{0,1}V_j + \mathcal{H}^{0,0}V_j^{(0,1)},V_k^{(2,0)} \Big) \nonumber \\
&\hspace{1cm}+ \Big(\mathcal{H}^{0,0}V_j,V_k^{(2,1)} \Big), \\ \label{innerproduct:end}
\left(\mathcal{H}_{\varepsilon,\beta_*+\delta} V^{\varepsilon,\delta}_j,V^{\varepsilon,\delta}_k \right)_{1,2} &= \Big(\mathcal{H}^{1,2}V_j + \mathcal{H}^{1,1}V_j^{(0,1)} + \mathcal{H}^{1,0}V_j^{(0,2)} + \mathcal{H}^{0,2}V_j^{(1,0)} + \mathcal{H}^{0,1}V_j^{(1,1)} + \mathcal{H}^{0,0}V_j^{(1,2)}, V_k \Big) \nonumber \\
&\hspace{1cm}+\Big(\mathcal{H}^{0,2}V_j + \mathcal{H}^{0,1}V_j^{(0,1)} + \mathcal{H}^{0,0}V_j^{(0,2)},V_k^{(1,0)} \Big) \nonumber \\
&\hspace{1cm}+\Big(\mathcal{H}^{1,1}V_j + \mathcal{H}^{1,0}V_j^{(0,1)} + \mathcal{H}^{0,1}V_j^{(1,0)} + \mathcal{H}^{0,0}V_j^{(1,1)},V_{n}^{(0,1)} \Big) \nonumber \\
&\hspace{1cm}+ \Big(\mathcal{H}^{0,1}V_j + \mathcal{H}^{0,0}V_j^{(0,1)},V_k^{(1,1)}  \Big) + \Big(\mathcal{H}^{1,0}V_j + \mathcal{H}^{0,0}V_j^{(1,0)},V_k^{(0,2)} \Big) \nonumber \\
&\hspace{1cm}+ \Big(\mathcal{H}^{0,0}V_j,V_k^{(1,2)} \Big).
\end{align}


\section{Proof of Theorem \ref{theo:main}}\label{Sec:proof}

\begin{lemm} \label{freqProp}
The purely imaginary matrix $\textrm{L}_{\eps, \delta}$ can be written as  
\begin{align}
\textrm{L}_{\varepsilon,\delta} = 
\begin{pmatrix} i\sigma & 0 \\ 0 & i\sigma \end{pmatrix} + i\begin{pmatrix} A &B \\ -B& C \end{pmatrix}, 
\end{align}
where $i\sigma$ is the repeated eigenvalue of the unperturbed operator $\mathcal{L}_{0,\beta_*}$ and
where $A$, $B$, and $C$ are real analytic functions of $(\eps, \delta)$ in a neighborhood of $(0,0)$ that have the expansions 
\bq\label{expand:ABC}
\begin{aligned}
A &= a_{0,1}\delta + a_{2,0}\varepsilon^2 + a_{0,2}\delta^2 + a_{2,1}\varepsilon^2\delta +a_{0,3}\delta^3 +a_{4,0}\eps^4   + O(\delta(|\eps|^3+|\delta|^3)),\\
B&=b_{3,0}\varepsilon^3 + O(|\eps|^4+|\delta|^4),\\
C &=c_{0,1}\delta + c_{2,0}\varepsilon^2 + c_{0,2}\delta^2 + c_{2,1}\varepsilon^2\delta +c_{0,3}\delta^3 +c_{4,0}\eps^4 +  O(\delta (|\eps|^3+|\delta|^3)).
\end{aligned}
\eq
We emphasize that all these coefficients depend analytically on the depth $h\in (0, \infty)$. 
\end{lemm}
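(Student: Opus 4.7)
\emph{Plan.} The starting point is that by \eqref{analyticity:cL} the operator $\cL_{\eps,\beta}$ is analytic in $(\eps,\beta)$, so the spectral projector $P_{\eps,\delta}$ from \eqref{def:Peps} is analytic near $(0,0)$; hence so are $\cK_{\eps,\delta}$, the perturbed basis $V_j^{\eps,\delta}$, and therefore each entry of $\textrm{L}_{\eps,\delta}$ defined by \eqref{matrixL}. Combined with the already-established facts that $\textrm{L}_{\eps,\delta}$ is purely imaginary and satisfies $(\textrm{L}_{\eps,\delta})_{12}=-(\textrm{L}_{\eps,\delta})_{21}$, and the observation that $\textrm{L}_{0,0}=i\sigma I$ (since $V_1, V_2$ are eigenvectors of $\cL_{0,\beta_*}$ with eigenvalue $i\sigma$), we immediately obtain the representation $\textrm{L}_{\eps,\delta}=i\sigma I + i\begin{pmatrix}A & B \\ -B & C\end{pmatrix}$ with $A,B,C$ real-analytic and vanishing at $(0,0)$. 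The actual content of the lemma is therefore the prescribed vanishing of low-order Taylor coefficients in \eqref{expand:ABC}.

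\emph{Key mechanism: Fourier selection.} Substituting the expansions \eqref{expand:H} and \eqref{expand:V} into \eqref{matrixL}, and using the explicit formulas \eqref{expand:U:start}--\eqref{Pmn:end} and \eqref{innerproduct:start}--\eqref{innerproduct:end}, each coefficient of $\eps^m\delta^n$ in an entry $(\textrm{L}_{\eps,\delta})_{jk}$ can be expressed as a finite sum of contour-integrated inner products of the form
\bq
\big(\cH^{m_0,\ell_0}\, S_\lambda\, \cH^{m_1,\ell_1}\, S_\lambda\cdots S_\lambda\, \cH^{m_r,\ell_r}\, V_j,\, V_k\big),
\eq
with $\sum_i m_i = m$ and $\sum_i \ell_i = n$, where $S_\lambda=(\cL_{0,\beta_*}-\lambda)^{-1}(I-P_{0,0})$. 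Two structural facts then drive the analysis. First, $V_1$ and $V_2$ are supported on the single Fourier modes $k=1$ and $k=-2$ respectively. Second, by \eqref{Hexpansion:eps} and Proposition \ref{prop:Rj}, each $\cH^{j,\ell}$ is composed of multiplications by trigonometric polynomials of degree at most $j$ together with Fourier multipliers $R^{j,\ell}$ whose action shifts a Fourier mode by at most $j$; meanwhile $S_\lambda$ preserves Fourier modes because $\cL_{0,\beta_*}$ does. Consequently every such summand shifts the mode of $V_j$ by an integer $s$ with $|s|\le \sum_i m_i$ whose parity equals that of $\sum_i m_i$ (each factor $\cH^{j,\ell}$ with $j\in\{1,3\}$ produces an odd shift, while each even-$j$ factor produces an even shift). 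The inner product with $V_k$ vanishes unless that total shift matches the mode difference between $V_j$ and $V_k$.

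\emph{Consequences.} For the off-diagonal entry $iB=(\textrm{L}_{\eps,\delta})_{12}$, the required mode shift is $\pm 3$, so we need $\sum_i m_i\ge 3$ and of odd parity. This kills $b_{0,n}$ for every $n$ (only $\cH^{0,\ell}$ factors are available, which preserve modes) and $b_{m,n}$ whenever $m\in\{1,2\}$ and $m+n\le 3$; the first surviving coefficient is $b_{3,0}$, and all higher-order contributions $\eps^3\delta^n$ with $n\ge 1$ and $\eps^m\delta^n$ with $m\ge 4$ are absorbed into $O(|\eps|^4+|\delta|^4)$ by elementary Young-type estimates. For the diagonal entries $iA$ and $iC$ the required shift is $0$ (even), so $\sum_i m_i$ must be even, forcing $a_{m,n}=c_{m,n}=0$ whenever $m$ is odd; this yields the form asserted in \eqref{expand:ABC}, with the displayed error term $O(\delta(|\eps|^3+|\delta|^3))$ absorbing the unlisted even-$m$ contributions such as $\eps^2\delta^2$, $\eps^4\delta$, $\eps^2\delta^3$, etc., each of which is dominated by $\eps^3\delta+\delta^4$.

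\emph{Main obstacle.} The conceptual Fourier-parity argument above is clean, but the rigorous accounting requires enumerating every monomial produced by \eqref{expand:U:start}--\eqref{Pmn:end} at each order $(m,n)$ and checking that the Fourier shifts along each composition are as claimed; the number of such monomials grows quickly with the total order, which is precisely why the subsequent explicit symbolic evaluation of the surviving coefficients---especially $b_{3,0}$, whose nondegeneracy away from finitely many exceptional depths is the heart of Theorem \ref{theo:main}---is deferred to the companion Mathematica file.
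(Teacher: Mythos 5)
Your proposal is correct and takes essentially the same route as the paper: both rest on a Fourier selection argument, with $V_1,V_2$ supported on single modes, the operators $\cH^{j,\ell}$ and $S_\lambda$ shifting modes by at most $j$ with parity $j$, and inner products vanishing unless the cumulative shift matches the mode difference. The paper records this via a table of wave numbers supported in each $V_j^{(m,n)}$, while you phrase it as a parity-and-magnitude constraint on the total shift — equivalent bookkeeping — and, like the paper, you defer the explicit evaluation of the surviving coefficients (notably $b_{3,0}$) to symbolic computation.
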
 
\begin{proof} 
The proof is identical to that in the infinite depth case \cite{CreNguStr}. In particular, since $\textrm{L}_{\varepsilon,\delta}$ is purely imaginary and real analytic in $\varepsilon$ and $\delta$, we have that $A$, $B$, and $C$ are real-valued, real-analytic functions of $\varepsilon$ and $\delta$. The anti-symmetry of the off-diagonal entries of $\textrm{L}_{\varepsilon,\delta}$ follows from  \eqref{L:reversediagonal}. Expansions \eqref{expand:ABC} come from 
substituting the expansions \eqref{innerproduct:start}-\eqref{innerproduct:end} of the inner products $(\mathcal{H}_{\varepsilon,\beta_*+\delta}V_j^{\varepsilon,\delta},V_k^{\varepsilon,\delta})$ into the matrix $L_{\eps, \delta}$ \eqref{matrixL}. As in the infinite depth case, each term in the expansion of $V_j^{\varepsilon,\delta}$ consists of a finite Fourier series spanning a small set of wave numbers given in the table below. Terms in the expansion of $\mathcal{H}_{\varepsilon,\beta_*+\delta}$ act to modulate these wave numbers in such a way that many terms in the expansion of $(\mathcal{H}_{\varepsilon,\beta_*+\delta}V_j^{\varepsilon,\delta},V_k^{\varepsilon,\delta})$ vanish. Those that do not vanish yield the coefficients $a_{i,j}$, $b_{i,j}$, and $c_{i,j}$ that appear above. These coefficients are real analytic functions of the resonant value $\beta_*$ and the depth parameter $h$. Their explicit expressions, except for $a_{4,0}$ and $c_{4, 0}$, can be found in our Mathematica file {\it CompanionToTransverseInstabilityFiniteDepth.nb}. It will turn out that the fourth-order coefficients {\it $a_{4,0}$ and $c_{4, 0}$ are not needed} for the proof of Theorem \ref{theo:main}.
\begin{center}
\begin{tabular}{ p{2.5cm}|p{2.5cm}||p{2.5cm}|p{2.5cm}  }
 \multicolumn{2}{c}{$V_1^{(k,\ell)}$ Wave Numbers} & \multicolumn{2}{c}{$V_2^{(k,\ell)}$ Wave Numbers}  \\ 
 \hline \hline
 $V_1$ & $\{1\}$ & $V_2$ & $\{-2\}$\\
 \hline
  $V_1^{(1,0)}$ & $\{0,2\}$ & $V_2^{(1,0)}$ & $\{-3,-1\}$\\
 \hline
   $V_1^{(0,1)}$ & $\{1\}$ & $V_2^{(0,1)}$ & $\{-2\}$\\
   \hline
     $V_1^{(2,0)}$ & $\{-1,1,3\}$ & $V_2^{(2,0)}$ & $\{-4,-2,0\}$\\
 \hline
   $V_1^{(1,1)}$ & $\{0,2\}$ & $V_2^{(1,1)}$ & $\{-3,-1\}$\\
 \hline
   $V_1^{(0,2)}$ & $\{1\}$ & $V_2^{(0,2)}$ & $\{-2\}$\\
   \hline
     $V_1^{(3,0)}$ & $\{-2,0,2,4\}$ & $V_2^{(3,0)}$ & $\{-5,-3,-1,1\}$\\
 \hline
   $V_1^{(2,1)}$ & $\{-1,1,3\}$ & $V_2^{(2,1)}$ & $\{-4,-2,0\}$\\
 \hline
   $V_1^{(1,2)}$ & $\{0,2\}$ & $V_2^{(1,2)}$ & $\{-3,-1\}$\\
 \hline
   $V_1^{(0,3)}$ & $\{1\}$ & $V_2^{(0,3)}$ & $\{-2\}$\\
 \hline
 \hline
\end{tabular}
\end{center}
\end{proof} 
As it turns out, most of the coefficients $a_{i,j}$, $b_{i,j}$, and $c_{i,j}$ in expansions \eqref{expand:ABC} do not play a central role in the stability calculations that follow. However, we will need both that $a_{0, 1}\ne c_{0, 1}$  and that $b_{3,0}$ is not identically zero for $h > 0$, which we prove in the following lemmata. 
\begin{lemm}\label{lemm:b30}
For all $h > 0$, $a_{0,1}<0< c_{0,1}$. 
\end{lemm}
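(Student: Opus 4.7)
The plan is to reduce to the case $\varepsilon=0$, where everything is explicit because $\cL_{0,\beta_*+\delta}$ is a Fourier multiplier. The coefficients $a_{0,1}$ and $c_{0,1}$ are, by definition, the $\delta$--derivatives at $(\varepsilon,\delta)=(0,0)$ of the diagonal entries of $\textrm{L}_{\varepsilon,\delta}-i\sigma I$, so I only need to identify these entries along the slice $\varepsilon=0$.

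First I would observe that, at $\varepsilon=0$, the operator $\cL_{0,\beta_*+\delta}$ commutes with translation in $x$, so its resolvent does too, and hence so does the spectral projection $P_{0,\delta}$ (and thus the Kato transformation $\cK_{0,\delta}$). By Lemma \ref{prop:lambda0Monotone} and the monotonicity of $k\mapsto \IM \lambda^0_\pm(k,\beta_*+\delta)$, together with uniqueness of the resonance from Proposition \ref{prop:rescond}, the only eigenvalues of $\cL_{0,\beta_*+\delta}$ that lie inside the small contour $\Gamma$ around $i\sigma$ are the two bifurcating values $\lambda^0_-(1,\beta_*+\delta)$ and $\lambda^0_+(-2,\beta_*+\delta)$. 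Consequently $R(P_{0,\delta})$ still decomposes as a direct sum of the Fourier-mode--$1$ and Fourier-mode--$(-2)$ components. Since $U_1$ is supported on the $k=1$ mode and $U_2$ on the $k=-2$ mode, applying $\cK_{0,\delta}$ preserves this mode separation. Therefore $U_1^{0,\delta}$ lives in the $k=1$ eigenspace and $U_2^{0,\delta}$ in the $k=-2$ eigenspace of $\cL_{0,\beta_*+\delta}$, and $\cL_{0,\beta_*+\delta}$ acts on each of them as multiplication by the respective eigenvalue.

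Second, this immediately forces $B(0,\delta)\equiv 0$ and
\begin{align*}
i\sigma + iA(0,\delta) &= \lambda^0_-(1,\beta_*+\delta) = i\bigl[c_0 - \gamma_1(\beta_*+\delta)\bigr], \\
i\sigma + iC(0,\delta) &= \lambda^0_+(-2,\beta_*+\delta) = i\bigl[-2c_0 + \gamma_2(\beta_*+\delta)\bigr],
\end{align*}
where $\gamma_j(\beta):=(j^2+\beta)^{1/4}\tanh^{1/2}(h(j^2+\beta)^{1/2})$. Using the resonance identities $\sigma=c_0-\gamma_1(\beta_*)=-2c_0+\gamma_2(\beta_*)$ from \eqref{sigma:gamma12}, I obtain
\begin{align*}
A(0,\delta) &= \gamma_1(\beta_*)-\gamma_1(\beta_*+\delta), \qquad C(0,\delta)=\gamma_2(\beta_*+\delta)-\gamma_2(\beta_*).
\end{align*}

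Third, differentiating in $\delta$ at $\delta=0$ gives $a_{0,1}=-\gamma_1'(\beta_*)$ and $c_{0,1}=\gamma_2'(\beta_*)$. Since $\beta\mapsto (j^2+\beta)^{1/4}$ and $\beta\mapsto \tanh^{1/2}(h(j^2+\beta)^{1/2})$ are both strictly increasing and strictly positive on $(0,\infty)$ (for any $h>0$), so is their product $\gamma_j$, hence $\gamma_j'(\beta_*)>0$ for $j=1,2$. This yields $a_{0,1}<0<c_{0,1}$ for every $h>0$.

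No real obstacle is expected: the whole argument reduces to the observation that the Kato basis at $\varepsilon=0$ is still a basis of Fourier-mode eigenvectors, after which the conclusion follows from the monotonicity in $\beta$ of the explicit symbol $\gamma_j$. The only point requiring care is justifying that $P_{0,\delta}$ preserves the mode decomposition and encloses only the two bifurcating eigenvalues, which uses the strict monotonicity of $k\mapsto \IM\lambda^0_\pm(k,\beta)$ from Lemma \ref{prop:lambda0Monotone}.
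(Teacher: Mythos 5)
Your proposal is correct but takes a genuinely different route from the paper. The paper's proof simply reads off from the companion Mathematica file the exact symbolic expressions
\[
a_{0,1}=-\frac{\tau_1}{2\gamma_1},\qquad c_{0,1}=\frac{\tau_2}{2\gamma_2},\qquad \tau_j:=\frac12\Big(h\,\textrm{sech}^2\big(h(j^2+\beta_*)^{1/2}\big)+\frac{\tanh\big(h(j^2+\beta_*)^{1/2}\big)}{(j^2+\beta_*)^{1/2}}\Big)>0,
\]
with $\gamma_j>0$ from \eqref{def:gammaj}, so the signs are immediate. You instead derive these constants conceptually: at $\varepsilon=0$ the operator $\cL_{0,\beta_*+\delta}$ is a Fourier multiplier, so $P_{0,\delta}$ and $\cK_{0,\delta}$ commute with $x$-translation, the Kato basis stays mode-separated, and $\textrm{L}_{0,\delta}=\mathrm{diag}\big(\lambda^0_-(1,\beta_*+\delta),\lambda^0_+(-2,\beta_*+\delta)\big)$, which together with \eqref{sigma:gamma12} forces $a_{0,1}=-\gamma_1'(\beta_*)$ and $c_{0,1}=\gamma_2'(\beta_*)$. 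A one-line chain-rule computation from $\gamma_j^2=(j^2+\beta)^{1/2}\tanh\big(h(j^2+\beta)^{1/2}\big)$ shows $\gamma_j'(\beta_*)=\tau_j/(2\gamma_j)$, so your expressions coincide exactly with the paper's. What your route buys is a structural interpretation: $a_{0,1}$ and $c_{0,1}$ are, up to sign, the $\beta$-derivatives of the two dispersion factors $\gamma_1$ and $\gamma_2$, and the strict sign asymmetry $a_{0,1}<0<c_{0,1}$ is nothing more than monotonicity of each $\gamma_j$ in $\beta$ combined with the opposite $\pm$ orientations in the resonance $\sigma=c_0-\gamma_1=-2c_0+\gamma_2$. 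This version is self-contained and explanatory; the paper's is shorter but opaque without the symbolic computation. The only point you rightly flagged as needing care---that $\Gamma$ encloses exactly the two bifurcating eigenvalues and nothing else for small $\delta$, so $R(P_{0,\delta})$ has the claimed mode decomposition---is indeed what Lemma~\ref{prop:lambda0Monotone} supplies.
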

\begin{proof}
    From {\it CompanionToTransverseInstabilityFiniteDepth.nb}, we have
    \begin{align*}
a_{0,1} = -\frac{\tau_1}{2\gamma_1} \quad \textrm{and} \quad c_{0,1} = \frac{\tau_2}{2\gamma_2},
    \end{align*} 
where $\gamma_j>0$ is given in \eqref{def:gammaj} and 
    \begin{align*}
\tau_j :=\frac12 \left(h\ \textrm{sech}^2\left(h\left( j^2+\beta_*\right)^\mez \right) + \frac{\tanh(h\left(j^2+\beta_*\right)^\mez)}{\left(j^2+\beta_*\right)^\mez}\right)>0.
    \end{align*}
Thus  $a_{0,1}<0< c_{0,1}$.
\end{proof}

\begin{lemm}\label{lemm:b30v2}
The coefficient $b_{3,0}$ exhibits the following limiting behaviors:
\begin{align*} b_{3,0}(h) = \frac{9}{1024\sqrt{2}} h^{-9/2} + \mathscr{o}\left(h^{-9/2}\right) ~ \textrm{as} ~h \rightarrow 0^+ \quad \textrm{and} \quad \displaystyle \lim_{h \rightarrow \infty} b_{3,0}(h) = b_{3,0,\infty} < 0. \end{align*}
Consequently, $b_{3,0}$ has only a finite number of zeros over $h > 0$. In fact, $b_{3,0}$ has at least one zero at $h_{\textrm{crit}} = 0.25065...$.
\end{lemm}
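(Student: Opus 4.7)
The plan is to exploit the explicit closed-form expression for $b_{3,0}(h)$ obtained from the Kato-basis computations of Section~\ref{section:expansions} and recorded in the companion Mathematica file. Since all the ingredients---$\beta_*(h)$ (Proposition~\ref{prop:rescond}), the polynomial/trigonometric coefficients of Proposition~\ref{prop:expandpq}, and the Fourier-multiplier symbols of Proposition~\ref{prop:Rj}---are real analytic in $h\in(0,\infty)$, the function $b_{3,0}$ is itself real analytic on $(0,\infty)$. A real-analytic function on $(0,\infty)$ that is not identically zero has only isolated zeros; the two asymptotic statements together will rule out accumulation of zeros at either endpoint, and once proved they immediately yield finiteness of the zero set. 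The strategy splits into the shallow-water asymptotics, the deep-water limit, and a numerical verification of the existence and location of a zero.

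For the shallow-water expansion $h\to 0^+$, I would substitute $c_0=\sqrt{\tanh h}=h^{1/2}-\tfrac{1}{6}h^{5/2}+\cdots$ into every occurrence of $c_0$ in the closed form, together with $\beta_*(h)=\tfrac{4}{3}h^2+\mathscr{o}(h^2)$ from Proposition~\ref{prop:rescond}(iii). The multiplier symbols $A^0_{k,\beta}$, $B^{\pm 1}_{k,\beta}$, $C^{s}_{k,\beta}$, $D^{s}_{k,\beta}$ and the polynomial coefficients $p_{i,j},q_{i,j},r_{i,j},\zeta_{i,j}$ contain large negative powers of $c_0$, so one must Taylor expand each factor to a sufficiently high order in $h$ and then track all cancellations symbolically. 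Carrying out this expansion in Mathematica should deliver the leading term $\tfrac{9}{1024\sqrt{2}}h^{-9/2}$. I anticipate this step to be the main technical obstacle: the expression for $b_{3,0}$ contains many terms, and the shallow-water expansions are delicate because each ingredient must be expanded to the correct order without dropping subleading contributions which combine nontrivially to produce the stated leading coefficient.

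For the deep-water limit $h\to\infty$, the situation is cleaner: $c_0\to 1$, $\beta_*(h)\to\beta_{*,\infty}$ exponentially by Proposition~\ref{prop:rescond}(iv), and each multiplier symbol converges to its infinite-depth analogue (see the remarks after Proposition~\ref{prop:Rj}). The limit $b_{3,0,\infty}$ therefore coincides with the analogous coefficient computed in \cite{CreNguStr} for the infinite-depth case, where it was shown to be strictly negative. Alternatively, substituting $\beta_{*,\infty}=2.7275\ldots$ numerically into the deep-water limit of the closed form confirms $b_{3,0,\infty}<0$.

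With the two asymptotics in hand, $b_{3,0}$ is real analytic on $(0,\infty)$, blows up like $h^{-9/2}$ (hence is strictly positive) near $0$, tends to a negative finite limit at infinity, and thus does not accumulate zeros at either endpoint; it therefore has only finitely many zeros on $(0,\infty)$, and by the intermediate value theorem at least one. The precise location $h_{\mathrm{crit}}=0.25065\ldots$ is then read off by numerical root-finding applied to the explicit closed form, concluding the proof.
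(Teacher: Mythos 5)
Your proposal follows essentially the same route as the paper: establish real analyticity of $b_{3,0}$ in $h$, obtain the $h\to 0^+$ Puiseux expansion in Mathematica by substituting $\beta_*(h)\sim \tfrac{4}{3}h^2$, identify the $h\to\infty$ limit with the infinite-depth coefficient from \cite{CreNguStr} (there shown to be negative), then deduce finiteness of the zero set from analyticity plus the two endpoint behaviors and existence of a zero from the intermediate value theorem with numerics for the location. The only minor divergence is that the paper, rather than relying purely on the citation or a numerical check for $b_{3,0,\infty}<0$, redisplays the explicit algebraic expression in $\xi$ and verifies $p(\xi),q(\xi)>0$ by hand with rational bounds on $\sqrt{7}$; you should likewise note that a rigorous (not merely numerical) sign check of the deep-water limit is needed to make the finiteness argument airtight.
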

\begin{proof}
 We require computer assistance to prove the limiting behavior of $b_{3,0}$ as $h \rightarrow 0^+$ and $h \rightarrow \infty$. However, at no point in our computer-assisted proof do we rely on numerical approximations. We work directly with the exact expression for $b_{3,0}$ given in {\it CompanionToTransverseInstabilityFiniteDepth.nb}.

  For the limit $h \rightarrow 0^+$, we substitute the expansion $\beta_*(h) \sim 4/3 h^2$ from Proposition ~\ref{prop:rescond} into the exact expression of $b_{3,0}$. What remains is a function only of $h$. With the help of Mathematica's Series function, we then perform a Puiseux series expansion of this function of $h$ about $h = 0$ and, in particular, obtain the leading-order term of this expansion, giving us
  \begin{align*}
 b_{3,0}(h) = \frac{9}{1024\sqrt{2}} h^{-9/2} + \mathscr{o}\left(h^{-9/2}\right) ~ \textrm{as} ~h \rightarrow 0^+ .
  \end{align*}
 The interested reader can verify this result in {\it CompanionToTransverseInstabilityFiniteDepth.nb}.
  
  For the limit $h \rightarrow \infty$, we refer to $\beta_*(h) \rightarrow \beta_{*,\infty}$ from Proposition ~\ref{prop:rescond} in combination with Mathematica's Limit function to calculate $b_{3,0,\infty}$ explicitly. The resulting expression we obtain for $b_{3,0,\infty}$ is initially quite complicated. However, we can compare this expression with the $b_{3,0}$ obtained in our infinite depth paper \cite{CreNguStr}. In particular, we show using Mathematica that the expression we obtain for $b_{3,0,\infty}$ in fact equals the $b_{3,0}$ of the infinite depth case. Using the expression for $b_{3,0}$ from our work in infinite depth in place of what we initially calculate for $b_{3,0,\infty}$ in this work, we find
\begin{align}
b_{3,0,\infty} = -\frac{(1+\xi^2)\left(p(\xi) + q(\xi)\sqrt{\xi^4 - 1} \right)}{r(\xi)},\label{b30_expression}
\end{align}
where
\begin{subequations}
\begin{align}
\xi &=  \frac12\left(3 - 3\left(2\sqrt{7}-1 \right)^{-\frac13} + \left( 2\sqrt{7}-1\right)^{\frac13} \right),  \label{xi_inf_depth}  \\
p(\xi) &= 87016 - 379693 \xi + 739050 \xi^2 - 974082 \xi^3 + 1042548 \xi^4 -  898707 \xi^5 + 559890 \xi^6 - 214548 \xi^7 \nonumber \\ &\hspace{1.5cm} + 22208 \xi^8 + 25957 \xi^9 - 15538 \xi^{10} + 3670 \xi^{11} + 100 \xi^{12} - 229 \xi^{13} + 54 \xi^{14}, \label{p_xi_inf_depth}\\
q(\xi) &= -3760 - 127707 \xi + 448476 \xi^2 - 659689 \xi^3 + 526682 \xi^4 -  230882 \xi^5 + 32188 \xi^6 + 23322 \xi^7 \nonumber \\ &\hspace{1.5cm} - 14544 \xi^8 + 3149 \xi^9 + 
 280 \xi^{10} - 257 \xi^{11} + 54 \xi^{12}, \\
r(\xi) &= 64\left(\xi^2 -3\xi + 5\right)\left(\xi^2 -3\xi+6\right)\left(\xi^2 -6\xi + 11 + \sqrt{\xi^4-1}\right)\Big(\xi^3 - \xi^2 + \xi -1 \nonumber \\ &\hspace{1.5cm}+ \xi\sqrt{\xi^4 - 1}\Big)^2\sqrt{\xi(3-\xi)}.  
\end{align}
\end{subequations}
Note that the variable $\xi$ in this paper is equivalent to the variable $\gamma_1$ in our infinite depth paper.

Directly substituting \eqref{xi_inf_depth} into \eqref{p_xi_inf_depth}, we find
 \begin{align*}
p(\xi) = \frac{-97739871069 + p_1 + p_2+ p_3}{4096\left(2\sqrt{7}-1\right)^{\frac{14}{3}}},
 \end{align*}
 where
 \begin{align*}
 p_1 &= 78384770484\sqrt{7}, \\
p_2 &= -37428273\left(2\sqrt{7}-1\right)^\frac23\left( 953-232\sqrt{7} \right), \quad \textrm{and} \\
p_3 &= -3\left(2\sqrt{7}-1 \right)^{\frac13}\left(64260927809-20320120798\sqrt{7} \right).
 \end{align*}
 Now we have the simple bounds
 \begin{align*} 2.6 < \sqrt{7}, \quad \left(2\sqrt{7}-1\right)^{\frac13} < 1.6, \quad \textrm{and} \quad \left(2\sqrt{7} -1\right)^{\frac23} < 2.7 ,  
 \end{align*}
so that we obtain the following  {\it rational} lower bounds for $p_1$, $p_2$, and $p_3$:
 \begin{align*}
 (78384770484)\cdot(2.6) < p_1, \\
 -(37428273)\cdot(2.7)\cdot(953-232\cdot2.6) < p_2, \\
 -(3)\cdot(1.6)\cdot(64260927809-20320120798\cdot2.6) < p_3,
 \end{align*}
from which it follows that
 \begin{align*}
p(\xi) > 0.
 \end{align*}
A similar argument shows that $q(\xi) > 0$, and in \cite{CreNguStr} we show that $r(\xi) > 0$ and $1 < \xi < 2$. It follows that
\begin{align*}
b_{3,0,\infty} < 0. 
\end{align*}
In fact, if we numerically evaluate $b_{3,0,\infty}$, we find $b_{3,0,\infty} = -0.49476...$.

Given the behavior of $b_{3,0}$ as $h \rightarrow 0^+$ and $h \rightarrow \infty$, $b_{3, 0}$ must have at least one zero in $(0, \infty)$. 
Since $b_{3,0}$ is analytic on $(0, \infty)$, its zero set $\mathcal{Z}$ cannot have an accumulation point in $(0, \infty)$.    $\mathcal{Z}$ is bounded because  $\lim_{h\to \infty} b_{3,0}(h)<0$. Moreover, $0$ cannot be an accumulation point of $\mathcal{Z}$ because  $\lim_{h\to 0^+} b_{3,0}(h)=\infty$. Thus $\mathcal{Z}$ is finite.

Although by analytic reasoning we have not been able to determine the number of zeros of $b_{3,0}$, 
a numerical calculation indicates that there is exactly one.  
Indeed, the numerical plot of $b_{3,0}$ in Figure 3 reveals that exactly one such depth occurs at $h_{crit} = 0.25065...$.  
In order to obtain Figure 3, we use Newton's method to compute $\beta_*(h)$ for a given $h$ from the resonance condition \eqref{resonancecond}. This in turn allows us to evaluate $b_{3,0}$ numerically at depth $h$. Repeating this procedure for $0.1 < h < 4$ in step sizes of $\Delta h = 0.01$ yields Figure 3. The critical depth $h_{crit} = 0.25065...$ is obtained by bisection using increasing smaller $\Delta h$ near the critical depth.


\begin{figure}[tb]
\includegraphics[width=9cm]{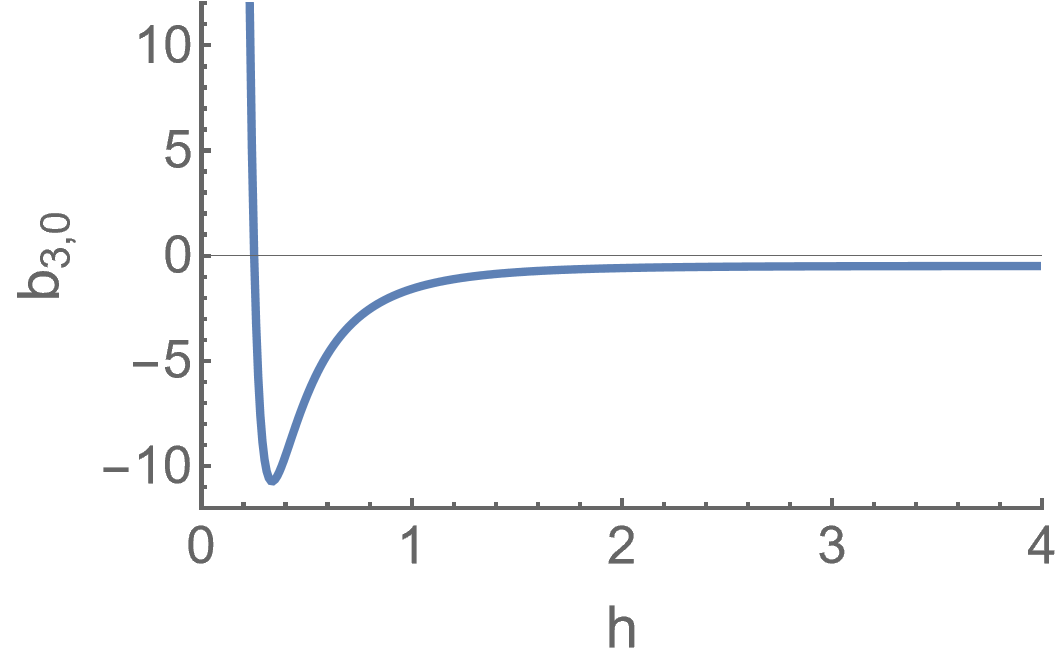}
\label{fig3}
\caption{A plot of $b_{3,0}$ as a function of $h$. As $h \rightarrow 0^+$, we have $b_{3,0} \rightarrow +\infty$. As $h \rightarrow \infty$, we have $b_{3,0} \rightarrow -0.49476...$, in agreement with \cite{CreNguStr}. At $ h_{crit} = 0.25065....$, we have $b_{3,0} = 0$. Thus, for this depth, we observe no transverse instability at $O(\eps^3)$. }
\end{figure}
\end{proof}


\large{ \textbf{ Proof of Theorem \ref{theo:main}}}
\vspace{.5cm}\\
The characteristic polynomial of $\textrm{L}_{\eps, \delta} - i\sigma I$ is 
\bq\label{charL} 
\det (\textrm{L}_{\eps, \delta}- i\sigma I -\lambda I) = \ld^2-i(A+C)\ld -AC-B^2 ,
\eq
whose discriminant is the real-valued function
  \bq\label{def:Deltaed}
  \Delta(\varepsilon,\delta) =-(A-C)^2+4B^2.
  \eq
  The eigenvalues of $\textrm{L}_{\eps, \delta}$ are 
\bq\label{ld:ABCDelta}
\lambda_\pm = i\Big(\sigma +\frac{1}2 (A+C) \Big) \pm \frac12\sqrt{\Delta(\eps,\delta)},  
\eq 
where we recall that $\sigma +\frac{1}2 (A+C)$ is real. Since $A$, $B$, and $C$ are real analytic in $(\eps, \delta)$ in a neighborhood of $(0, 0)$, so are $A+C$ and 
$\Delta(\eps, \delta)$. Moreover, it follows from the expansions of $A$, $B$, and $C$ that $\Delta(\varepsilon,\delta) = \cO(\delta^2)$ and $A+C = \cO(\delta)$ as $(\varepsilon,\delta) \rightarrow (0,0)$. We will complete the proof of Theorem \ref{theo:main} by {\it showing that the characteristic polynomial \eqref{charL} has a root with positive real part}, that is, $\Delta(\eps,\delta)>0$ for suitably small $\eps$ and $\delta$.  

Taking inspiration from our work in infinite depth \cite{CreNguStr}, we map $\delta$ to a new variable $\theta$ as follows:
\bq\label{delta:eps}
\delta = \kappa_0\eps^2 + \theta \eps^3, \quad \textrm{where} \quad \kappa_0 := \frac{a_{2,0}-c_{2,0}}{a_{0,1}-c_{0,1}}.
\eq
This transformation is well-defined by Lemma \ref{lemm:b30}.
Upon substituting \eqref{delta:eps} into the series expansions of $A$, $B$, and $C$ \eqref{expand:ABC}, we obtain the following expansion for $\Delta$:
\begin{align}
\Delta(\varepsilon,\delta) = \Delta(\varepsilon,\kappa_0\eps^2 + \theta \eps^3) = \varepsilon^6 \Big[4b_{3,0}^2 - (a_{0,1}-c_{0,1})^2\theta^2 \Big] + O\left(\varepsilon^7 \right). \label{deltaExp}
\end{align}
We refer to Section 6 in  \cite{CreNguStr} for the detailed derivation of \eqref{deltaExp}. We stress that the choice of $\kappa_0$ eliminates the contribution of the fourth-order coefficients $a_{4,0}$ and $c_{4, 0}$ to the leading term in \eqref{deltaExp}.  Choosing $\theta$ such that
\bq\label{cd:ttv2}
 |\tt|< \kappa_1, \quad \textrm{where} \quad \kappa_1 := \frac{2|b_{3,0}|}{|a_{0, 1}-c_{0, 1}|},
\eq
we appear to achieve $\Delta(\varepsilon,\delta) > 0$ for all sufficiently small $\varepsilon > 0$, as desired. This choice assumes, however, that $b_{3,0}$ does not vanish for all depths $h$.  Lemma ~\ref{lemm:b30v2} proves this is not the case, unless $h = h_{crit} =0.25065...$. Thus, provided $h \neq h_{crit}$, we have  $\Delta(\varepsilon,\delta) > 0$ for all sufficiently small  $\varepsilon > 0$.

Returning to the expression for the eigenvalues \eqref{ld:ABCDelta}, we find that $\text{Re}\lambda_+=\mez \sqrt{\Delta(\eps, \delta)} > 0$ for $h\ne h_{crit}$. 
Therefore, small-amplitude Stokes waves are unstable with respect to transverse perturbations with wave numbers $\alpha = \sqrt{\beta_*+\delta}$ for any  $\delta \in (\varepsilon^2\kappa_0-\varepsilon^3\ka_1, \varepsilon^2\kappa_0 +\varepsilon^3\ka_1)$, provided $\varepsilon$ is sufficiently small. Finally, by \eqref{deltaExp}, we have $\text{Re}\lambda_+ = \cO(\varepsilon^3)$ as $\varepsilon \rightarrow 0$, completing the proof.  \qed

Substituting \eqref{deltaExp} into \eqref{ld:ABCDelta} yields 
\bq\label{finalexpansion:ldpm}
\begin{aligned}
\ld_\pm\equiv \ld_\pm(\eps, \tt)
&= i\left[\sigma+\frac{a_{0,1}c_{2,0}-a_{2,0}c_{0,1}}{(a_{0,1}-c_{0,1})}\varepsilon^2 +\frac{a_{0, 1}+c_{0, 1}}{2}\tt\eps^3\right]\\
&\qquad \pm\mez \left[4b_{3, 0}^2-(a_{0, 1}-c_{0, 1})^2\tt^2\right]^\mez|\eps|^3+O(\eps^4).
\end{aligned}
\eq 
By eliminating $\tt$ in from the real and imaginary parts of $\ld_\pm$, we obtain 
 \begin{coro}\label{cor:ellipse}
 For sufficiently small $\eps$, the curve $(-\ka_1, \ka_1)\ni \tt\mapsto \ld_+(\eps, \tt)$ (resp. $(-\ka_1, \ka_1)\ni \tt\mapsto \ld_-(\eps, \tt)$) is within $O(\eps^4)$  distance to the entire right (resp. left) half of the ellipse 
\begin{align}\label{form:ellipse}
\frac{\lambda_r^2}{\left(b_{3,0}\varepsilon^3\right)^2} + \frac{\left(\lambda_i -\sigma - \left(\frac{a_{0,1}c_{2,0}-a_{2,0}c_{0,1}}{(a_{0,1}-c_{0,1})} \right) \varepsilon^2\right)^2}{\left(\frac{b_{3,0}(a_{0,1}+c_{0,1})}{a_{0,1}-c_{0,1}}\varepsilon^3\right)^2} =1
\end{align}
in the complex plane.   See Figure 1.  
\end{coro}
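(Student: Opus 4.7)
The plan is to eliminate the parameter $\theta$ from \eqref{finalexpansion:ldpm} to recover the implicit ellipse equation, and then promote the resulting approximate algebraic identity to a geometric distance bound. Writing $\lambda_r = \mathrm{Re}\,\lambda_\pm$, $\lambda_i = \mathrm{Im}\,\lambda_\pm$, and introducing the shifted center $\sigma_\varepsilon := \sigma + \frac{a_{0,1}c_{2,0} - a_{2,0}c_{0,1}}{a_{0,1} - c_{0,1}}\varepsilon^2$, separating \eqref{finalexpansion:ldpm} into real and imaginary parts yields the affine relation $\lambda_i - \sigma_\varepsilon = \tfrac{1}{2}(a_{0,1}+c_{0,1})\theta\varepsilon^3 + O(\varepsilon^4)$ together with $\lambda_r = \pm\tfrac{1}{2}\sqrt{4b_{3,0}^2 - (a_{0,1}-c_{0,1})^2\theta^2}\,|\varepsilon|^3 + O(\varepsilon^4)$. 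I would solve the first for $\theta\varepsilon^3$ and substitute its square into the square of the second; since $a_{0,1}\ne c_{0,1}$ by Lemma~\ref{lemm:b30} and $b_{3,0}\ne 0$ by Lemma~\ref{lemm:b30v2} (using the hypothesis $h\ne h_{\mathrm{crit}}$), dividing through by $b_{3,0}^2\varepsilon^6$ produces exactly the ellipse equation \eqref{form:ellipse} with a residual of size $O(\varepsilon)$ on the right-hand side.

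Next I would convert this approximate identity into a Euclidean distance bound. Let $F(\lambda_r,\lambda_i;\varepsilon)$ denote the left-hand side of \eqref{form:ellipse}. Along the parametrized curve, $F = 1 + O(\varepsilon)$; since both semi-axes of the ellipse are of order $\varepsilon^3$, the gradient $\|\nabla F\|$ is bounded below by a constant times $\varepsilon^{-3}$ in a fixed tubular neighborhood of the ellipse. A standard normal-projection argument therefore yields, for each curve point, a distance to the ellipse bounded by $|F-1|/\|\nabla F\| = O(\varepsilon^4)$. In the opposite direction, the map $\theta\mapsto\lambda_i-\sigma_\varepsilon$ is an $O(\varepsilon^4)$ perturbation of the bijective affine map $\theta\mapsto\tfrac{1}{2}(a_{0,1}+c_{0,1})\theta\varepsilon^3$, whose image on $(-\kappa_1,\kappa_1)$ sweeps exactly the vertical extent of the ellipse by the definition $\kappa_1 = 2|b_{3,0}|/|a_{0,1}-c_{0,1}|$; at the endpoints $\theta = \pm\kappa_1$ the discriminant under the square root vanishes, forcing $\lambda_r = O(\varepsilon^4)$ and matching the top and bottom of the ellipse. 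Hence $\lambda_+(\varepsilon,\cdot)$ traces the entire right half, and $\lambda_-(\varepsilon,\cdot)$ the entire left half, of \eqref{form:ellipse}, each within $O(\varepsilon^4)$.

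The main technical subtlety is the uniformity of the normal-distance estimate near $\theta = \pm\kappa_1$, where $d\lambda_r/d\theta$ diverges and the tangent to the curve becomes vertical, matching the vertical tangent of the ellipse at its top and bottom. There the straightforward ``distance equals residual over gradient norm'' bound must be replaced by a more careful local comparison. The cleanest way is to reparametrize both the curve and the ellipse locally by $\lambda_i$ (on which each depends smoothly and monotonically near the endpoints) and compare the resulting two graphs directly, or equivalently to apply the implicit function theorem to $F(\lambda_r,\lambda_i;\varepsilon) = 1 + O(\varepsilon)$ near the endpoints. Either route preserves the $O(\varepsilon^4)$ bound uniformly on all of $(-\kappa_1,\kappa_1)$.
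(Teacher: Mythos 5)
Your approach matches the paper's: eliminate $\theta$ from the real and imaginary parts of \eqref{finalexpansion:ldpm} to recover the implicit ellipse equation \eqref{form:ellipse}. The paper offers no justification for the $O(\varepsilon^4)$ distance claim beyond the phrase ``eliminating $\theta$,'' and your gradient-based normal-projection argument --- together with the reparametrization by $\lambda_i$ near $\theta = \pm\kappa_1$ to handle the vertical tangent --- correctly supplies the technical content the paper leaves implicit.
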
 

\vspace{.1in}
{\noindent{\bf{Acknowledgment.}}   The work of HQN was partially supported by NSF grant DMS-2205710. The work of RPC was partially supported by NSF grant DMS-2402044. 
}


\end{document}